\newcommand{\Z}{\mathbb{Z}}
\newcommand{\F}{\mathbb{F}}
\newcommand{\bk}{\Bbbk}
\newcommand{\Ga}{\mathbb{G}_{\mathrm{a}}}
\newcommand{\Gm}{\mathbb{G}_{\mathrm{m}}}
\newcommand{\Mod}{\mathsf{Mod}}
\newcommand{\Kb}{K^{\mathrm{b}}}
\newcommand{\mix}{{\mathrm{mix}}}
\newcommand{\Db}{D^{\mathrm{b}}}
\newcommand{\Dmix}{\mathsf{D}^\mix}
\newcommand{\Pmix}{\mathsf{P}^\mix}
\newcommand{\Perv}{\mathsf{Perv}}
\newcommand{\hatstar}{\mathbin{\widehat{\star}}}
\newcommand{\For}{\mathsf{For}}
\newcommand{\IC}{\mathscr{I}\hspace{-1pt}\mathscr{C}}
\newcommand{\id}{\mathrm{id}}
\newcommand{\simto}{\xrightarrow{\sim}}
\DeclareMathOperator{\Hom}{Hom}
\newcommand{\Iw}{\mathrm{I}}
\newcommand{\Iwu}{\mathrm{I}_{\mathrm{u}}}
\newcommand{\Fl}{\mathrm{Fl}}
\newcommand{\tFl}{\widetilde{\mathrm{Fl}}}
\newcommand{\Gr}{\mathrm{Gr}}
\newcommand{\Wf}{W_{\mathrm{f}}}
\newcommand{\Sf}{S_{\mathrm{f}}}
\newcommand{\bG}{\mathbf{G}}
\newcommand{\bB}{\mathbf{B}}
\newcommand{\bT}{\mathbf{T}}
\newcommand{\bW}{\mathbf{W}}
\newcommand{\bWf}{\mathbf{W}_{\mathrm{f}}}
\newcommand{\bS}{\mathbf{S}}
\newcommand{\bSf}{\mathbf{S}_{\mathrm{f}}}
\newcommand{\bWaff}{\bW_{\mathrm{aff}}}
\newcommand{\bSaff}{\mathbf{S}_{\mathrm{aff}}}
\newcommand{\bg}{\mathbf{g}}
\newcommand{\bb}{\mathbf{b}}
\newcommand{\bt}{\mathbf{t}}
\newcommand{\Loop}{\mathrm{L}}
\def\lotimes{\@ifnextchar_{\@lotimessub}{\@lotimesnosub}}
\def\@lotimessub_#1{\mathchoice{\mathbin{\mathop{\otimes}^L}_{#1}}%
  {\otimes^L_{#1}}{\otimes^L_{#1}}{\otimes^L_{#1}}}
\def\@lotimesnosub{\mathbin{\mathop{\otimes}^L}}
\def\lboxtimes{\@ifnextchar_{\@lboxtimessub}{\@lboxtimesnosub}}
\def\@lboxtimessub_#1{\mathchoice{\mathbin{\mathop{\boxtimes}^L}_{#1}}%
  {\boxtimes^L_{#1}}{\boxtimes^L_{#1}}{\boxtimes^L_{#1}}}
\def\@lboxtimesnosub{\mathbin{\mathop{\boxtimes}^L}}
\newcommand{\scO}{\mathscr{O}}
\newcommand{\cK}{\mathcal{K}}
\newcommand{\cJ}{\mathcal{J}}
\newcommand{\Rep}{\mathsf{Rep}}
\newcommand{\st}{\mathsf{t}}
\newcommand{\sfD}{\mathsf{D}}
\newcommand{\sfK}{\mathsf{K}}
\newcommand{\sfP}{\mathsf{P}}
\newcommand{\sfT}{\mathsf{T}}
\newcommand{\scB}{\mathscr{B}}
\newcommand{\scE}{\mathscr{E}}
\newcommand{\scF}{\mathscr{F}}
\newcommand{\scG}{\mathscr{G}}
\newcommand{\scM}{\mathscr{M}}
\newcommand{\scT}{\mathscr{T}}
\newcommand{\can}{\mathrm{can}}
\newcommand{\BSRep}{\mathsf{BSRep}}
\newcommand{\SRep}{\mathsf{SRep}}
\newcommand{\Waff}{W_{\mathrm{aff}}}
\newcommand{\Saff}{S_{\mathrm{aff}}}
\newcommand{\Spec}{\mathrm{Spec}}
\newcommand{\bbJ}{\mathbb{J}}
\newcommand{\bbI}{\mathbb{I}}
\newcommand{\Par}{\mathsf{Par}}
\newcommand{\ParBS}{\mathsf{Par}^{\mathrm{BS}}}
\newcommand{\ph}{{}^p \hspace{-1pt} h}
\newcommand{\sfTBS}{\mathsf{T}^{\mathrm{BS}}}
\newcommand{\add}{\mathrm{add}}
\newcommand{\BSK}{\mathsf{BSK}}
\newcommand{\BSKr}{\mathsf{BSK}_{\mathrm{r}}}
\newcommand{\sv}{\mathsf{v}}
\newcommand{\rmX}{\mathrm{X}}
\newcommand{\fR}{\mathfrak{R}}
\numberwithin{equation}{section}
\numberwithin{figure}{section}
\newtheorem{thm}{Theorem}[section]
\newtheorem{lem}[thm]{Lemma}
\newtheorem{prop}[thm]{Proposition}
\newtheorem{cor}[thm]{Corollary}
\theoremstyle{definition}
\theoremstyle{remark}
\newtheorem{rmk}[thm]{Remark}
\newtheorem{ex}[thm]{Example}
\title[Mixed modular perverse sheaves on affine flag varieties]{Mixed modular perverse sheaves on affine flag varieties and Koszul duality}
 \author[S.~Riche]{Simon Riche}
 \address{Universit\'e Clermont Auvergne, CNRS, LMBP, F-63000 Clermont-Ferrand, France.}
 \email{simon.riche@uca.fr}
\thanks{This project has received funding from the European Research Council (ERC) under the European Union's Horizon 2020 research and innovation programme (grant agreement No.~101002592).}
\subjclass[2020]{Primary 14M15, 32S60}
\begin{document}

\begin{abstract}
Under some technical assumptions, and building on joint work with Bezrukavnikov,
we prove a multiplicity formula for indecomposable tilting perverse sheaves on affine flag varieties, with coefficients in a field of characteristic $p$, in terms of $p$-Kazhdan--Lusztig polynomials.
Under the same assumptions,
we also explain the construction of a ``degrading functor'' relating mixed modular perverse sheaves (as defined in joint work with Achar) on such varieties to ordinary perverse sheaves.
\end{abstract}

\maketitle

\section{Introduction}
\label{sec:intro}

\subsection{Mixed perverse sheaves}

Many applications of perverse sheaves to problems in Representation Theory require in one form or another a use of \emph{mixed} perverse sheaves in the sense of Deligne (see e.g.~\cite[\S 5.1]{bbd}). This has long been an obstacle to the application of such methods for problems involving fields of positive characteristic since, whereas the definition of perverse sheaves with coefficients in any field causes no problem, the translation of the definition of their \emph{mixed} counterparts in this setting seems hopeless.\footnote{The main reason for that is that Deligne's definition is stated in terms of properties of eigenvalues of the Frobenius, which should involve some powers of the cardinality $q$ of the field of definition of the varieties under consideration; now powers of an integer in a field of characteristic $0$ or of positive characteristic behave in a drastically different way!}

As a way to bypass this difficulty, we proposed in joint work with Achar~\cite{modrap2} a definition of mixed modular\footnote{In this context, ``modular'' always means ``with coefficients in a field of positive characteristic.''} perverse sheaves on some varieties (including flag varieties of Kac--Moody groups). This definition might seem artificial, but it has proven useful in several constructions involving categories of representations of reductive algebraic groups over fields of positive characteristic, see e.g.~\cite{prinblock,amrw}. Our approach was mainly suggested by works of Be{\u\i}linson--Ginzburg--Soergel~\cite{bgs} for $\ell$-adic sheaves, and the recent (at that point) theory of parity complexes developed by Juteau--Mautner--Williamson~\cite{jmw}. Since then more involved approaches to this question have been proposed, in particular by Eberhardt--Scholbach~\cite{es} based on the use of motivic sheaves; they give rise to the same category, but now enhanced with more operations (i.e.~a 6-functors formalism), and is applicable to more general contexts. 

\subsection{The search for a forgetful functor}

Despite this progress, a central question remains unanswered: what is the relation between mixed perverse sheaves and ordinary perverse sheaves? Namely, in the $\ell$-adic setting, mixed perverse sheaves are complexes of sheaves on a variety $X_\circ$ defined over a finite field $\mathbb{F}_q$, and we have a canonical functor to ``ordinary'' perverse sheaves, i.e.~perverse sheaves on the fiber product $X=X_\circ \otimes_{\mathbb{F}_q} \overline{\mathbb{F}_q}$ (where $\overline{\mathbb{F}_q}$ is an algebraic closure of $\mathbb{F}_q$), given by pullback along the projection $X \to X_\circ$. Informally, this functor ``forgets the mixed structure;'' it has various nice properties, it particular it sends simple objects to simple objects. If one modifies the category of mixed perverse sheaves slightly following~\cite{bgs}, this functor is even a ``degrading functor,'' i.e.~it behaves like the forgetful functor from graded modules over a finite-dimensional algebra to ordinary modules over that algebra. This functor is very useful; for instance it is at the heart of the proof of the Decomposition Theorem in~\cite{bbd}, and appears in one form or another in all the proofs that the dimensions of fibers of intersection cohomology complexes on flag varieties are computed by Kazhdan--Lusztig polynomials (see~\cite{kl}).

In the modular setting, one directly works with complexes on $X$ (either complexes of parity complexes in the approach of~\cite{modrap2}, or some motivic sheaves in the approach of~\cite{es}), and there exists a priori no easy way to ``forget the mixed structure'' and recover an ordinary perverse sheaf. This is a pitty because, even if the theory is way more incomplete than in the $\ell$-adic setting, there are some computations that one can do with mixed modular perverse sheaves (e.g.~describe multiplicities in indecomposable tilting perverse sheaves over flag varieties, see~\cite{amrw}) and for which no counterpart for ordinary perverse sheaves exists.

This problem was solved in~\cite{modrap2} in the (important) case when $X$ is the flag variety of a connected reductive group, and the characteristic of the field of coefficients is good. The main results of this paper are an extension of this construction to the case of \emph{affine} flag varieties, and an application to a formula for multiplicities of ordinary indecomposable tilting perverse sheaves on such varieties. (In the case of $\ell$-adic sheaves, a similar formula is due to Yun~\cite{yun}.)

\subsection{Main results}

Let $\F$ be an algebraically closed field of positive characteristic, and let $\bk$ be an algebraic closure of a finite field whose characteristic $p$ is invertible in $\F$. Let also $G$ be a connected reductive algebraic group over $\F$, and choose a Borel subgroup $B \subset G$ and a maximal torus $T \subset B$. Then one can consider the loop group $\Loop G$ attached to $G$, the Iwahori subgroup $\Iw$ determined by $B$, its pro-unipotent radical $\Iwu$, and the affine flag variety $\Fl_G = \Loop G / \Iw$. Consider also the category $\Perv_{\Iwu}(\Fl_G,\bk)$ of $\Iwu$-equivariant $\bk$-perverse sheaves on $\Fl_G$. This category has several collections of important objects, whose classes all form bases of the Grothendieck group:
\begin{enumerate}
 \item 
 The simple objects are the intersection cohomology complexes $(\IC_w : w \in W)$ attached to $\Iwu$-orbits on $\Fl_G$; they are naturally parametrized by the extended affine Weyl group $W = \Wf \ltimes \rmX_*(T)$ where $\Wf$ is the Weyl group of $(G,T)$.
 \item 
 For any $w \in W$, we have ``standard'' and ``costandard'' perverse sheaves $\Delta_w$ and $\nabla_w$, obtained by $!$-extension, resp.~$*$-extension, of the constant perverse sheaf on the orbit labelled by $w$.
 \item
 For any $w \in W$, we also have an indecomposable tilting perverse sheaf $\scT_w$ whose support is the closure of the orbit of $w$.
\end{enumerate}
For various reasons, it is a very interesting problem for Geometric Representation Theory to describe the combinatorics of these objects, and in particular to describe the coefficients of the expansion of classes of simple objects or indecomposable tilting objects in the basis of the Grothendieck group given by classes of standard perverse sheaves.\footnote{It is a basic fact that, for any $w \in W$, the classes of $\Delta_w$ and $\nabla_w$ in the Grothendieck group coincide, so that there is no need to choose between the two families.} For any indecomposable tilting perverse sheaf $\scT_w$, the coefficient of the class of $\Delta_y$ in this expansion will be denoted $(\scT_w : \Delta_y)$.

On the other hand, following~\cite{modrap2} one can consider the ``mixed modular derived category'' $\Dmix_{\Iwu}(\Fl_G,\bk)$, defined as the bounded homotopy category of the category of $\Iwu$-equivariant parity complexes on $\Fl_G$ in the sense of~\cite{jmw}. This category admits a canonical t-structure which we call the ``perverse t-structure,'' and whose heart is denoted $\Perv^{\mix}_{\Iwu}(\Fl_G,\bk)$. This latter category looks very much like ``a graded version of $\Perv_{\Iwu}(\Fl_G,\bk)$;'' in particular we have families of objects as above:
\begin{enumerate}
 \item 
 The simple objects (up to ``Tate twist'') are some ``mixed intersection cohomology complexes'' $(\IC^{\mix}_w : w \in W)$ attached to $\Iwu$-orbits on $\Fl_G$; they are naturally parame\-trized by $W$.
 \item 
 For any $w \in W$, we have ``standard'' and ``costandard'' mixed perverse sheaves $\Delta^\mix_w$ and $\nabla^\mix_w$.
 \item
 For any $w \in W$, we also have an indecomposable mixed tilting perverse sheaf $\scT^\mix_w$ whose support is the closure of the orbit of $w$.
\end{enumerate}
As above, classes of standard mixed perverse sheaves\footnote{In this setting it is no longer true that the classes of $\Delta^\mix_w$ and $\nabla^\mix_w$ coincide, but they are related by a simple operation, so that again the corresponding bases play essentially the same roles.} form a basis of the Grothen\-dieck group (now over the ring $\Z[v,v^{-1}]$, where $v$ corresponds to Tate twist), and one can ask what is the expansion of classes of simple or indecomposable tilting perverse sheaves in this basis. This question is largely open for simple objects, but for indecomposable tilting objects an answer is given (assuming that $p$ is odd and very good) in~\cite{amrw}: these coefficients are the $p$-Kazhdan--Lusztig polynomials $(\ph_{y,w} : y,w \in W)$ of the corresponding Hecke algebra in the sense of~\cite{jw}.

\begin{figure}
 \begin{tabular}{|c|c|c|c|c|c|c|c|c|}
  \hline
  $\mathbf{A}_n$ ($n \geq 1$) & $\mathbf{B}_n$ ($n \geq 2$) & $\mathbf{C}_n$ ($n \geq 3$) & $\mathbf{D}_n$ ($n \geq 4$) & $\mathbf{E}_6$ & $\mathbf{E}_7$ & $\mathbf{E}_8$ & $\mathbf{F}_4$ & $\mathbf{G}_2$ \\
  \hline
  $1$ & $n$ & $2$ & $2$ & $3$ & $19$ & $31$ & $3$ & $3$\\
  \hline
 \end{tabular}
\caption{Bounds on $p$}
\label{fig:bounds}
\end{figure}


Now we assume that $p$ is very good for $G$, and that moreover for any indecomposable summand of the root system of $(G,T)$, $p$ is strictly larger than the corresponding bound in Figure~\ref{fig:bounds}. Under these assumptions, the main results of the paper are the following.
\begin{enumerate}
 \item 
 (Theorem~\ref{thm:koszul-duality})
 The construction of an exact degrading (with respect to the Tate twist) functor
 \[
  \nu : \Perv^\mix_{\Iwu}(\Fl_G,\bk) \to \Perv_{\Iwu}(\Fl_G,\bk)
 \]
which satisfies
\[
 \nu(\IC_w^\mix) \cong \IC_w, \quad \nu(\Delta_w^\mix) \cong \Delta_w, \quad \nu(\nabla_w^\mix) \cong \nabla_w, \quad \nu(\scT_w^\mix) \cong \scT_w.
\]
\item
\label{it:intro-main}
(Corollary~\ref{cor:mult-tilt})
A proof that for any $y,w \in W$ we have
\[
(\scT_w : \Delta_y) = \ph_{y,w}(1).
\]
\end{enumerate}

\begin{rmk}
\begin{enumerate}
\item
In the body of the paper we work under slightly weaker assumptions, which however require more notation. These assumptions allow e.g.~the group $G=\mathrm{GL}_n(\F)$ and any prime $p$ invertible in $\F$.
\item
We expect that these assumptions can be weakened at least to ``$p$ is good for $G$.''
\item
We emphasize that in~\eqref{it:intro-main} we consider multiplicities in \emph{ordinary} (not mixed!) indecomposable tilting perverse sheaves. We do not know any more direct way of computing these integers.
\end{enumerate}
\end{rmk}

\subsection{Comments on the proof and further results}

As in the case of ``finite'' flag varieties in~\cite{modrap2},
these two results are deduced from the construction of a ``degrading'' functor relating $\Iwu$-equivariant parity complexes and tilting perverse sheaves on $\Fl_G$. In~\cite{modrap2} this construction relied on the results of the companion paper~\cite{modrap1}. Here it is deduced from the main result of~\cite{reg-quotient-pt2}, using relatively elementary manipulations with some ``Hecke categories'' whose definition is inspired by some constructions of Abe. (See~\S\S\ref{ss:Hecke-a-la-Abe}--\ref{ss:graded-Hecke} for the definition of these categories.)

As in~\cite{modrap2} our construction is \emph{non canonical}: it requires a choice of a ``pseudo-logarithm'' morphism (see~\S\ref{ss:relation-mult-add}) for the Langlands dual group $G^\vee_\bk$. This choice is necessary to relate the monodromy that naturally appears in the study of tilting perverse sheaves, and which involves the algebra of functions on the maximal torus $T^\vee_\bk$ of $G^\vee_\bk$, and the $\Iw$-equivariant cohomology of a point that naturally appears in the study of parity complexes, and which involves the algebra of functions on the dual of the Lie algebra of $T^\vee_\bk$. The interplay between these ``multiplicative'' and ``additive'' algebras seems to be a common feature of the various approaches to ``mixed sheaves'' on flag varieties (see e.g.~\cite[Discussion following Definition~1.2]{cvdhs}), and might deserve a better understanding.

This construction is closely related to that of a ``Koszul duality'' for categories of perverse sheaves on flag varieties as in~\cite{bgs,by,amrw}, and in fact in~\S\ref{ss:Koszul-duality} we explain how our methods allow (again following the methods of~\cite{modrap2}) to provide an alternative construction of the ``modular'' version of this duality from~\cite{amrw}, in the special case of affine flag varieties.

In the final~\S\ref{ss:Whit-par} we also discuss Whittaker and parahoric variants of the problems studied above.

\begin{rmk}
 There is another very important family of perverse sheaves on $\Fl_G$, namely the ``central perverse sheaves'' associated with representations of $G^\vee_\bk$; see e.g.~\cite{ar-book}. Versions of these objects can now also be defined in the mixed setting, thanks to work of Cass--van den Hove--Scholbach~\cite{cvdhs}. It is likely that our functor $\nu$ sends mixed central perverse sheaves to central perverse sheaves, but this problem will not be studied here.
\end{rmk}

\subsection{Acknowledgements}

The present work is the continuation of a long-term collaboration with P.~Achar, and an outgrowth of a joint work with R.~Bezrukav\-nikov. We thank both of them for their collaboration and for sharing their ideas on this and related subjects.

\section{Some Hecke categories}
\label{sec:Hecke-cat}

\subsection{Affine and extended affine Weyl groups}
\label{ss:Waff}


Let $\bk$ be an algebraically closed field of characteristic $p$, and $\bG$ be a connected reductive algebraic group over $\bk$. We fix a Borel subgroup $\bB \subset \bG$ and a maximal torus $\bT \subset \bB$, and denote by $\bWf$ the Weyl group of $(\bG,\bT)$, i.e.~the quotient $\mathrm{N}_{\bG}(\bT)/\bT$. Then $\bWf$ is a finite group, and the choice of $\bB$ determines a system of generators $\bSf \subset \bWf$ such that $(\bWf,\bSf)$ is a Coxeter system. More specifically, consider the character lattice $\rmX^*(\bT)$ of $\bT$, and the subset of roots $\fR \subset \rmX^*(\bT)$. We denote by $\fR_+ \subset \fR$ the positive system consisting of the opposites of the $\bT$-weights in the Lie algebra of $\bB$, and by $\fR_{\mathrm{s}}$ the corresponding basis of $\fR$. Then $\bSf$ consists of the reflections associated with the simple roots. We will also consider the cocharacter lattice $\rmX_*(\bT)$, and the system of coroots $\fR^\vee \subset \rmX_*(\bT)$. The Lie algebras of $\bG$, $\bB$, $\bT$ will be denoted $\bg$, $\bb$, $\bt$ respectively.

The extended affine Weyl group of $(\bG,\bT)$ is the semidirect product
\[
\bW := \bWf \ltimes \rmX^*(\bT).
\]
The affine Weyl group of $(\bG,\bT)$ is the subgroup
\[
\bWaff := \bWf \ltimes \Z\fR
\]
where $\Z\fR \subset \rmX^*(\bT)$ is the lattice generated by $\fR$.
For $\lambda \in \rmX^*(\bT)$, we will denote by $\st(\lambda)$ the associated element of $\bW$.
It is a standard fact that there exists a natural subset $\bSaff \subset \bWaff$ containing $\bSf$ and such that $(\bWaff,\bSaff)$ is a Coxeter system; more precisely $\bSaff$ consists of the elements of $\bSf$ together with the products $\st(\beta) s_\beta$ where $\beta$ is a maximal short root. By construction, $\bWf$ is then the parabolic subgroup of $\bWaff$ generated by $\bSf$.

If we set, for $w \in \bWf$ and $\lambda \in \rmX^*(\bT)$,
\begin{equation}
\label{eqn:formula-length}
\ell(w\st(\lambda))= \sum_{\substack{\alpha \in \mathfrak{R}_+ \\ w(\alpha) \in \mathfrak{R}_+}} |\langle \lambda, \alpha^\vee \rangle| + \sum_{\substack{\alpha \in \mathfrak{R}_+ \\ w(\alpha) \in -\mathfrak{R}_+}} |\langle \lambda, \alpha^\vee \rangle+1|,
\end{equation}
then it is well known that the restriction of $\ell$ to $\bWaff$ is the length function associated with our Coxeter generators $\bSaff$, and that if we set $\mathbf{\Omega} = \{w \in \bW \mid \ell(w)=0\}$ then the natural morphism
\[
\mathbf{\Omega} \ltimes \bWaff \to \bW
\]
is a group isomorphism. Moreover, in this semidirect product $\mathbf{\Omega}$ acts on $\bWaff$ by Coxeter group automorphisms, i.e.~it stabilizes $\bSaff$.

Recall from~\cite[Lemma~3.1]{reg-quotient-pt2} that if $\bG$ has simply connected derived subgroup, for any $s \in \bSaff \smallsetminus \bSf$, there exist $s' \in \bSf$ and $w \in \bW$ such that $\ell(ws')=\ell(w)+1$ and $s=ws'w^{-1}$.
We will fix once and for all such elements.

\subsection{Representations of the universal centralizer group scheme}
\label{ss:representations-Iadj}

Recall (see e.g.~\cite{reg-quotient-pt2}) that for any separated $\bk$-scheme $X$ endowed with an action of $\bG$ we can consider the associated universal stabilizer, defined as the fiber product
\[
(\bG \times X) \times_{X \times X} X
\]
where the morphism $\bG \times X \to X \times X$ is given by $(g,x) \mapsto (g \cdot x,x)$, and the morphism $X \to X \times X$ is the diagonal embedding. This scheme has a natural structure of affine group scheme over $X$ (with respect to the natural projection to $X$).

One can in particular consider this construction for the adjoint action of $\bG$ on itself; the associated group scheme is denoted $\bbJ$, and called the \emph{universal centralizer}. This group scheme itself is not so interesting because it is not flat, but assuming that the following conditions hold:
\begin{enumerate}
\item
 $\bG$ has simply connected derived subgroup;
\item
\label{it:ass-center}
 the scheme-theoretic center $\mathrm{Z}(\bG) \subset \bG$ is smooth,
 \end{enumerate}
 the restriction $\bbJ_{\mathrm{reg}}$ of $\bbJ$ to the open subscheme $\bG_{\mathrm{reg}} \subset \bG$ of regular elements is smooth. (This statement is classical; for a proof in this generality, see~\cite[Lemma~2.17]{reg-quotient-pt2}.) From now on we assume that these conditions are satisfied. (Note for later use that~\eqref{it:ass-center} is equivalent to the property that $\rmX^*(\bT)/\Z\fR$ has no $p$-torsion; see~\cite[\S 2.3]{reg-quotient-pt2}.)

Consider a ``Steinberg section'' $\mathbf{\Sigma}$ as in~\cite[\S 2.2]{reg-quotient-pt2}. The properties of this section that we will need are the following: $\mathbf{\Sigma}$ is a closed subscheme in $\bG$, contained in $\bG_{\mathrm{reg}}$, and the composition
\begin{equation}
\label{eqn:map-Sigma-adj-quotient}
\mathbf{\Sigma} \hookrightarrow \bG \to \bG/\bG = \bT/\bWf
\end{equation}
is an isomorphism. (Here the second morphism is the adjoint quotient map, and the identification on the right-hand side is classical.)

Let $\bbJ_{\mathbf{\Sigma}}$ be the restriction of $\bbJ$ to $\mathbf{\Sigma}$, a smooth affine group scheme over $\mathbf{\Sigma}$, and set
\[
\bbI_{\mathbf{\Sigma}} = (\bT \times_{\bT/\bWf} \bT) \times_{\bT/\bWf} \bbJ_{\mathbf{\Sigma}},
\]
where the map $\bbJ_{\mathbf{\Sigma}} \to \bT/\bWf$ is the composition of the projection $\bbJ_{\mathbf{\Sigma}} \to \mathbf{\Sigma}$ with the isomorphism~\eqref{eqn:map-Sigma-adj-quotient}. Let also
$\Rep(\bbI_{\mathbf{\Sigma}})$
be the abelian category
of representations of $\bbI_{\mathbf{\Sigma}}$ on coherent $\scO_{\bT \times_{\bT/\bWf} \bT}$-modules.
It identifies with the category of comodules over the $\scO(\bT \times_{\bT/\bWf} \bT)$-Hopf algebra
\[
\scO(\bbI_{\mathbf{\Sigma}}) = \scO(\bbJ_{\mathbf{\Sigma}}) \otimes_{\scO(\mathbf{\Sigma})} \scO(\bT \times_{\bT/\bWf} \bT)
\]
which are finitely generated as $\scO(\bT \times_{\bT/\bWf} \bT)$-modules. Since $\scO(\bT \times_{\bT/\bWf} \bT)$ is finite as an $\scO(\mathbf{\Sigma})$-module, this category admits a natural monoidal structure defined by
\[
M \circledast N = M \otimes_{\scO(\bT)} N.
\]
This bifunctor is right exact on each side, and the unit object for this monoidal structure is $\scO(\bT)$, seen as functions on the diagonal copy $\bT \subset \bT \times_{\bT/\bWf} \bT$, and endowed with the trivial structure as a representation of $\bbI_{\mathbf{\Sigma}}$.

We will now define (following~\cite[\S 3.2]{reg-quotient-pt2}) objects $(\mathscr{M}_w : w \in \bW)$ of $\Rep(\bbI_{\mathbf{\Sigma}})$ parametrized by $\bW$ as follows. First, if $w \in \bWf$ then $\mathscr{M}_w$ is defined as the structure sheaf of the closed subscheme
\[
\{(w(t),t) : t \in \bT\} \subset \bT \times_{\bT/\bWf} \bT,
\]
endowed with the trivial structure as a representation.
The projection on the first component induces an isomorphism $\mathscr{M}_w \simto \scO(\bT)$; under this isomorphism, the action of $\scO(\bT \times_{\bT/\bWf} \bT)=\scO(\bT) \otimes_{\scO(\bT/\bWf)} \scO(\bT)$ on $\mathscr{M}_w$ is given by $(f \otimes g) \cdot m = fw(g) m$ for $f,g,m \in \scO(\bT)$. 

By~\cite[Equation~(2.13)]{reg-quotient-pt2}, there exists a canonical morphism of group schemes
\[
\bbJ_{\mathbf{\Sigma}} \times_{\bT/\bWf} \bT \to \bT \times \bT,
\]
where the right-hand side is seen as a group scheme over $\bT$ via the first projection.
If $\lambda \in \rmX^*(\bT)$, one can consider the representation $\scO_{\bT} \otimes \bk_{\bT}(\lambda)$ of $\bT \times \bT$. Restricting this representation to $\bbJ_{\mathbf{\Sigma}} \times_{\bT/\bWf} \bT$, and then pushing the result forward along the diagonal embedding $\bT \to \bT \times_{\bT/\bWf} \bT$ we obtain an object of $\Rep(\bbI_{\mathbf{\Sigma}})$, which will be denoted $\mathscr{M}_{\st(\lambda)}$.

It is clear that for $w,y \in \bWf$ and $\lambda,\mu \in \rmX^*(\bT)$ we have canonical isomorphisms
\begin{align}
\label{eqn:formula-M-1}
\mathscr{M}_w \circledast \mathscr{M}_y & \simto \mathscr{M}_{wy}, \\
\label{eqn:formula-M-2}
\mathscr{M}_{\st(\lambda)} \circledast \mathscr{M}_{\st(\mu)} &\simto \mathscr{M}_{\st(\lambda+\mu)}.
\end{align}
It is explained in the discussion following~\cite[Lemma~3.2]{reg-quotient-pt2} that for $w \in \bWf$ and $\lambda \in \rmX^*(\bT)$ we have a canonical isomorphism
\[
\mathscr{M}_w \circledast \mathscr{M}_{\st(\lambda)} \circledast \mathscr{M}_{w^{-1}} \simto \mathscr{M}_{\st(w(\lambda))}.
\]
Combining this with~\eqref{eqn:formula-M-1}--\eqref{eqn:formula-M-2} we deduce that if for $w=x\st(\lambda) \in \bWf \ltimes \rmX^*(\bT)=\bW$ we set
\[
\mathscr{M}_w := \mathscr{M}_x \circledast \mathscr{M}_{\st(\lambda)},
\]
then for any $w,y \in \bW$ we have a canonical isomorphism 
\[
\mathscr{M}_w \circledast \mathscr{M}_y \simto \mathscr{M}_{wy}.
\]

We next define some objects $(\mathscr{B}_s : s \in \bSaff)$ associated with simple reflections in $\bWaff$. First, if $s \in \bSf$ we define $\mathscr{B}_s$ by
\[
\mathscr{B}_s := \scO(\bT \times_{\bT/\{1,s\}} \bT),
\]
which we view as an $\scO(\bT \times_{\bT/\bWf} \bT)$-module via the closed embedding
\[
\bT \times_{\bT/\{1,s\}} \bT \subset \bT \times_{\bT/\bWf} \bT,
\]
and endow with the trivial structure as a representation. If $s \in \bSaff \smallsetminus \bSf$, recall from~\S\ref{ss:Waff} that we have fixed $s' \in \bSf$ and $w \in \bW$ such that $s=ws'w^{-1}$; we set
\begin{equation}
\label{eqn:def-Bs}
\mathscr{B}_s := \mathscr{M}_w \circledast \mathscr{B}_{s'} \circledast \mathscr{M}_{w^{-1}}.
\end{equation}
It is easily seen (e.g.~by reduction to the case $s \in \bSf$) that for any $s \in \bSaff$ there exist exact sequences
\[
\mathscr{M}_e \hookrightarrow \mathscr{B}_s \twoheadrightarrow \mathscr{M}_s, \quad \mathscr{M}_s \hookrightarrow \mathscr{B}_s \twoheadrightarrow \mathscr{M}_e.
\]

 \subsection{Completions}
 \label{ss:completions}

Consider the ideal in $\scO(\bT \times_{\bT/\bWf} \bT)$ defined by the point $(e,e)$ (where $e \in \bT$ is the neutral element), and denote
by $(\bT \times_{\bT/\bWf} \bT)^\wedge$ the spectrum of the completion of $\scO(\bT \times_{\bT/\bWf} \bT)$ with respect to this ideal. We define similarly $\bT^\wedge$ and $(\bT/\bWf)^\wedge$ as the spectra of the completions of $\scO(\bT)$ and $\scO(\bT/\bWf)$ with respect to the ideal corresponding to $e$ and its image, respectively. Then, as explained in~\cite[Lemma~3.3]{reg-quotient-pt2}, we have canonical isomorphisms
\begin{equation}
\label{eqn:isom-completions}
(\bT \times_{\bT/\bWf} \bT)^\wedge \cong (\bT \times_{\bT/\bWf} \bT) \times_{\bT/\bWf} (\bT/\bWf)^\wedge \cong \bT^\wedge \times_{(\bT/\bWf)^\wedge} \bT^\wedge.
\end{equation}
It is also proved in \emph{loc.}~\emph{cit.} that the natural morphism $\scO((\bT/\bWf)^\wedge) \to \scO(\bT^\wedge)$ induces an isomorphism
\begin{equation}
\label{eqn:isom-completions-2}
\scO((\bT/\bWf)^\wedge) \simto \scO(\bT^\wedge)^{\bWf}.
\end{equation}

We set
\[
\bbI_{\mathbf{\Sigma}}^\wedge := (\bT \times_{\bT/\bWf} \bT)^\wedge \times_{\bT \times_{\bT/\bWf} \bT} \bbI_{\mathbf{\Sigma}} \cong (\bT \times_{\bT/\bWf} \bT)^\wedge \times_{\bT / \bWf} \mathbb{J}_{\mathbf{\Sigma}},
\]
a smooth affine group scheme over the affine scheme $(\bT \times_{\bT/\bWf} \bT)^\wedge$. We will consider the category $\Rep(\bbI_{\mathbf{\Sigma}}^\wedge)$ of representations of this group scheme on coherent $\scO_{(\bT \times_{\bT/\bWf} \bT)^\wedge}$-modules. 
The isomorphisms in~\eqref{eqn:isom-completions} show that an $\scO((\bT \times_{\bT/\bWf} \bT)^\wedge)$-module is the same thing as an $\scO(\bT^\wedge)$-bimodule on which the left and right actions of $\scO((\bT/\bWf)^\wedge)$ coincide. (We will use this identification repeatedly and without further notice below.) In particular the category of such modules admits a natural monoidal product, induced by the tensor product for $\scO(\bT^\wedge)$-bimodules; moreover this product stabilizes the subcategory of finitely generated $\scO((\bT \times_{\bT/\bWf} \bT)^\wedge)$-modules. Since $\bbI_{\mathbf{\Sigma}}^\wedge$ is the pullback of a group scheme over $(\bT/\bWf)^\wedge$, this product induces a monoidal product on the category $\Rep(\bbI_{\mathbf{\Sigma}}^\wedge)$, which will again be denoted $\circledast$. In~\cite[Lemma~3.4]{reg-quotient-pt2} it is also proved that the category $\Rep(\bbI_{\mathbf{\Sigma}}^\wedge)$ is Krull--Schmidt.

%


Pulling back the representations $(\mathscr{M}_w : w \in \bW)$ and $(\mathscr{B}_s : s \in \bSaff)$ introduced in~\S\ref{ss:representations-Iadj} along
the natural morphism $(\bT \times_{\bT/\bWf} \bT)^\wedge \to \bT \times_{\bT/\bWf} \bT$ we obtain objects $(\mathscr{M}^\wedge_w : w \in \bW)$ and $(\mathscr{B}^\wedge_s : s \in \bSaff)$ in $\Rep(\bbI^\wedge_{\mathbf{\Sigma}})$. It is clear that for any $w,y \in \bW$ we have a canonical isomorphism 
\begin{equation}
\label{eqn:convolution-D}
\mathscr{M}^\wedge_w \circledast \mathscr{M}^\wedge_y \simto \mathscr{M}^\wedge_{wy},
\end{equation}
and that for $s \in \bSaff$ we have exact sequences
\begin{equation}
 \label{eqn:exact-seq-Bs}
 \mathscr{M}^\wedge_e \hookrightarrow \mathscr{B}^\wedge_s \twoheadrightarrow \mathscr{M}^\wedge_s, \qquad \mathscr{M}^\wedge_s \hookrightarrow \mathscr{B}^\wedge_s \twoheadrightarrow \mathscr{M}^\wedge_e.
 \end{equation}
 By~\cite[Lemma~3.5]{reg-quotient-pt2}, 
for any $s \in \bSaff \smallsetminus \bSf$ the object $\scB^\wedge_s$ is independent of the choices of $w$ and $s'$ as in~\S\ref{ss:Waff} up to canonical isomorphism; moreover, for any $\omega \in \mathbf{\Omega}$ and $s \in \bSaff$ we have a canonical isomorphism
 \begin{equation}
 \label{eqn:conjugation-Bs}
  \scM^\wedge_\omega \circledast \scB^\wedge_s \circledast \scM^\wedge_{\omega^{-1}} \cong \scB^\wedge_{\omega s \omega^{-1}}.
 \end{equation}

We will denote by $\BSRep(\bbI_{\mathbf{\Sigma}}^\wedge)$ the category with
\begin{itemize}
\item
objects the collections $(\omega, s_1, \dots, s_i)$ with $\omega \in \mathbf{\Omega}$ and $s_1, \dots, s_i \in \bSaff$;
\item
morphisms from $(\omega, s_1, \dots, s_i)$ to $(\omega', s'_1, \dots, s'_j)$ given by
\[
\Hom_{\Rep(\bbI_{\mathbf{\Sigma}}^\wedge)}(\scM^\wedge_\omega \circledast \scB^\wedge_{s_1} \circledast \cdots \circledast \scB^\wedge_{s_i}, \scM^\wedge_{\omega'} \circledast \scB^\wedge_{s_1'} \circledast \cdots \circledast \scB^\wedge_{s'_j}).
\]
\end{itemize}
By definition there exists a canonical fully faithful functor
\begin{equation}
\label{eqn:functor-BSRep-Rep}
\BSRep(\bbI_{\mathbf{\Sigma}}^\wedge) \to \Rep(\bbI_{\mathbf{\Sigma}}^\wedge).
\end{equation}
For any collections $(\omega, s_1, \dots, s_i)$ and $(\omega', s'_1, \dots, s'_j)$ as above, we have a canonical isomorphism
\begin{multline*}
 \bigl( \scM^\wedge_\omega \circledast \scB^\wedge_{s_1} \circledast \cdots \circledast \scB^\wedge_{s_i} \bigr) \circledast \bigl( \scM^\wedge_{\omega'} \circledast \scB^\wedge_{s_1'} \circledast \cdots \circledast \scB^\wedge_{s'_j} \bigr) \\
 \cong \scM^\wedge_{\omega \omega'} \circledast \scB^\wedge_{(\omega')^{-1} s_1 \omega'} \circledast \cdots \circledast \scB^\wedge_{(\omega')^{-1} s_i \omega'} \circledast \scB^\wedge_{s_1'} \circledast \cdots \circledast \scB^\wedge_{s'_j};
\end{multline*}
this allows us to define a monoidal product (again denoted $\circledast$) on $\BSRep(\bbI_{\mathbf{\Sigma}}^\wedge)$ which is defined on objects by
\[
 (\omega, s_1, \dots, s_i) \circledast (\omega', s'_1, \dots, s'_j) = (\omega\omega', (\omega')^{-1} s_1 \omega', \dots, (\omega')^{-1} s_i \omega', s'_1, \dots, s'_j)
\]
and such that~\eqref{eqn:functor-BSRep-Rep} is monoidal.

We will denote by
\[
\SRep(\bbI^\wedge_{\mathbf{\Sigma}})
\]
the karoubian closure of the additive hull of the category $\BSRep(\bbI_{\mathbf{\Sigma}}^\wedge)$. By the Krull--Schmidt property, this category identifies with the (monoidal) full subcategory of $\Rep(\bbI^\wedge_{\mathbf{\Sigma}})$ whose objects are direct sums of direct summands of objects of the form
\[
 \scM^\wedge_\omega \circledast \scB^\wedge_{s_1} \circledast \cdots \circledast \scB^\wedge_{s_i}
\]
with $\omega \in \mathbf{\Omega}$ and $s_1, \dots, s_i \in \bSaff$. (In these notations, ``$\mathsf{BS}$'' stands for ``Bott--Samelson,'' and ``$\mathsf{S}$'' for ``Soergel,'' since these constructions are very similar to classical constructions related to Bott--Samelson resolutions and Soergel bimodules.)

\subsection{Hecke categories ``\`a la Abe''}
\label{ss:Hecke-a-la-Abe}

We now explain how to construct some categories following a pattern initiated by Abe~\cite{abe}, see also~\cite[\S 3.4]{reg-quotient-pt2}. We consider a noetherian domain $R$ endowed with an action of $\bW$ (by ring automorphisms), and denote by $Q$ the fraction field of $R$. We denote by $\sfK'(R)$ the category defined as follows. The objects are the $R$-bimodules $M$ together with a decomposition
\begin{equation}
\label{eqn:decomp-K-Abe}
M \otimes_{R} Q = \bigoplus_{w \in \bW} M^w_{Q}
\end{equation}
as $(R, Q)$-bimodules such that:
\begin{itemize}
\item there exist only finitely many $w$'s such that $M^w_{Q} \neq 0$;
\item for any $w \in \bW$, $r \in R$ and $m \in M^w_{Q}$ we have $m \cdot r = w(r) \cdot m$.
\end{itemize}
Morphisms in this category are defined as morphisms of $R$-bimodules respecting the decompositions~\eqref{eqn:decomp-K-Abe}. The category $\sfK^{\prime}(R)$ has a natural monoidal structure, with product denoted $\star$ and induced by the tensor product over $R$. (To see this one observes that the conditions above imply that the left $R$-action on $M \otimes_{R} Q$ extends to an action of $Q$, see~\cite[Remark~2.2]{abe}.)

We will also denote by $\sfK(R)$ the full subcategory in $\sfK'(R)$ whose objects are those whose underlying $R$-bimodule is finitely generated, and which are flat as right $R$-modules. The latter condition implies that the natural morphism $M \to M \otimes_R Q$ is injective, which (in view of the second condition above) implies in particular that the left and right actions of $R^\bW$ on $M$ coincide. The arguments in~\cite[Lemma~2.6]{abe} show that the underlying bimodule of any object in $\sfK(R)$ is in fact finitely generated as a left $R$-module and as a right $R$-module. Using this property, it is easily seen that $\sfK(R)$ is a monoidal subcategory of $\sfK'(R)$.

\begin{rmk}
\label{rmk:switch-abe}
As explained in~\cite[\S 2.2]{abe}, for any $M$ in $\sfK'(R)$ there exists a canonical isomorphism $Q \otimes_R M \simto M \otimes_R Q$. 
(In the examples we will consider below, the action of $\bW$ on $R$ will factor through an action of the finite group $\bWf$, so that $R$ will be finite over $R^\bW$. In this case, both $Q \otimes_R M$ and $M \otimes_R Q$ identify with $M \otimes_{R^\bW} \mathrm{Frac}(R^\bW)$.)
As a consequence, switching the left and right $R$-actions defines an autoequivalence of the category $\sfK'(R)$, where the $w$-graded part in the image of $M$ is $M_Q^{w^{-1}}$ with the actions switched. This equivalence is ``antimonoidal'' in the sense that it swaps factors in a tensor product. It restricts to an autoequivalence of the subcategory of $\sfK'(R)$ whose objects are finitely generated (as bimodules) and flat both as a left and as a right $R$-module.
\end{rmk}

We have natural objects in $\sfK(R)$ attached to elements in $\bW$, and constructed as follows. Given $w \in \bW$, we denote by $F_w$ the $R$-bimodule which is isomorphic to $R$ as an abelian group, and endowed with the structure of $R$-bimodule determined by the rule
\[
 r \cdot m \cdot r' = rmw(r')
\]
for $r,r' \in R$ and $m \in F_w$. If we endow this bimodule with the decomposition of $F_{w} \otimes_{R} Q$ such that this module is concentrated in degree $w$, we obtain an object in $\sfK(R)$. It is clear that for any $w,y \in \bW$ we have a canonical isomorphism
\[
F_w \star F_y \simto F_{wy}.
\]

Next, for $s \in \bS$ we will denote by $R^s \subset R$ the subring of $s$-invariants. Assume that 
\begin{equation}
\label{eqn:assumption-deltas}
\text{there exists $\delta _s \in R$ such that $(1,\delta_s)$ is a basis of $R$ as an $R^s$-module.}
\end{equation}
Then we set
\[
B_{s} := R \otimes_{R^s} R.
\]
Our assumption ensures that $B_s$ is finite and free (in particular, flat) as a right $R$-module.
Moreover this objects admits a canonical decomposition~\eqref{eqn:decomp-K-Abe}, hence defines an object in $\sfK(R)$. In fact,
since the action of $s$ on $R$ is nontrivial by our assumption, the decomposition of $B_{s} \otimes_{R} Q = R \otimes_{R^s} Q$ is uniquely determined by the fact that it is concentrated in degrees $\{e,s\} \subset \bW$.
More explicitly, using the formula 
\[
\delta_s \delta_s = \delta_s (\delta_s + s(\delta_s)) - \delta_s s(\delta_s)
\]
one checks that we have
\[
(B_s)_Q^e = (\delta_s \otimes 1 - 1 \otimes s(\delta_s)) \cdot Q, \quad (B_s)_Q^s = (\delta_s \otimes 1 - 1 \otimes \delta_s) \cdot Q.
\]

By~\cite[Lemma~3.6]{reg-quotient-pt2}, if $s,s' \in \bSaff$ and $w \in \bW$ satisfy $s'=wsw^{-1}$, and if~\eqref{eqn:assumption-deltas} holds for $s$, then this condition also holds for $s'$, and moreover we have a canonical isomorphism
\begin{equation}
\label{eqn:conj-Fw-Bs}
F_w \star B_s \star F_{w^{-1}} \simto B_{s'}.
\end{equation}

We now assume that~\eqref{eqn:assumption-deltas} is satisfied for any $s \in \bSaff$. We will then denote by $\BSK(R)$ the category with
\begin{itemize}
\item
objects the collections $(\omega, s_1, \dots, s_i)$ with $\omega \in \mathbf{\Omega}$ and $s_1, \dots, s_i \in \bSaff$;
\item
morphisms from $(\omega, s_1, \dots, s_i)$ to $(\omega', s'_1, \dots, s'_j)$ given by
\[
\Hom_{\sfK(R)}(F_\omega \star B_{s_1} \star \cdots \star B_{s_i}, F_{\omega'} \star B_{s_1'} \star \cdots \star B_{s'_j}).
\]
\end{itemize}
By definition there exists a canonical fully faithful functor
\begin{equation}
\label{eqn:functor-HBS-K}
\BSK(R) \to \sfK(R).
\end{equation}
Using the isomorphisms~\eqref{eqn:conj-Fw-Bs} (when $w \in \mathbf{\Omega}$) one sees that there exists a natural convolution product (still denoted $\star$) on $\BSK(R)$ which is defined on objects by
\[
 (\omega, s_1, \dots, s_i) \star (\omega', s'_1, \dots, s'_j) = (\omega\omega', (\omega')^{-1} s_1 \omega', \dots, (\omega')^{-1} s_i \omega', s'_1, \dots, s'_j),
\]
and such that~\eqref{eqn:functor-HBS-K} is monoidal.

\begin{rmk}
\label{rmk:switch-abe-BS}
Instead of putting the element in $\mathbf{\Omega}$ to the left, one can also put it to the right, and define the monoidal category $\BSKr(R)$ with objects the collections $(s_1, \dots, s_i, \omega)$ and morphisms defined in the obvious way.
The equivalence of Remark~\ref{rmk:switch-abe} sends each $B_s$ to itself, and each $F_w$ to $F_{w^{-1}}$. 
It therefore induces an equivalence of categories $\BSK(R) \simto \BSKr(R)$ which is antimonoidal and is given on objects by
\[
(\omega, s_1, \dots, s_i) \mapsto (s_i, \dots, s_1, \omega^{-1}).
\] 
The same comment applies to the graded versions introduced in~\S\ref{ss:graded-Hecke} below.
\end{rmk}


The following lemma is obvious.

\begin{lem}
\label{lem:isom-equiv-Abe-cat}
Let $R$ and $R'$ be two noetherian domains endowed with actions of $\bW$ by ring automorphisms. Assume that condition~\eqref{eqn:assumption-deltas} is satisfied for the ring $R$ (for any $s \in \bSaff$), and assume given a $\bW$-equivariant ring isomorphism $R \simto R'$. Then condition~\eqref{eqn:assumption-deltas} is satisfied for the ring $R'$ (for any $s \in \bSaff$), and there exists a natural equivalence of monoidal categories
\[
\sfK'(R) \simto \sfK'(R')
\]
which restricts to an equivalence
\[
\sfK(R) \simto \sfK(R')
\]
sending each object $F_w$ ($w \in \bW$) or $B_s$ ($s \in \bSaff$) in $\sfK(R)$ to the corresponding object in $\sfK(R')$. As a consequence, we deduce an equivalence of categories
\[
\BSK(R) \simto \BSK(R')
\]
which is the identity on objects.
\end{lem}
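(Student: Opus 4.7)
The statement is essentially a transport-of-structure claim along the $\bW$-equivariant isomorphism $\varphi : R \simto R'$, so the plan is to package this transport cleanly and verify the compatibilities.

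First I would define the candidate functor on $\sfK'(R)$. Given an object $M$ of $\sfK'(R)$, let $M'$ be the abelian group $M$ equipped with the $R'$-bimodule structure obtained by transport of structure along $\varphi$, i.e.\ $r' \cdot m \cdot s' := \varphi^{-1}(r') \cdot m \cdot \varphi^{-1}(s')$. Since $\varphi$ is a ring isomorphism, the fraction field $Q$ of $R$ is carried to the fraction field $Q'$ of $R'$ by the induced map, and the decomposition~\eqref{eqn:decomp-K-Abe} of $M \otimes_R Q$ transports to a decomposition of $M' \otimes_{R'} Q'$ indexed by the same set of $w$'s. The condition $m \cdot r = w(r) \cdot m$ is preserved precisely because $\varphi$ is $\bW$-equivariant. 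Morphisms transport in the same way, giving a functor $\sfK'(R) \to \sfK'(R')$, whose inverse is constructed in the same way from $\varphi^{-1}$.

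Next I would check monoidality. The canonical map $M \otimes_R N \to M' \otimes_{R'} N'$ sending $m \otimes n$ to $m \otimes n$ is an isomorphism of $R'$-bimodules, and it respects the decompositions (whose indexing by $\bW$ does not depend on $\varphi$). This gives a natural monoidal isomorphism, and shows that the equivalence $\sfK'(R) \simto \sfK'(R')$ is monoidal. Restriction to $\sfK(R)$ is immediate: the functor preserves the properties of being finitely generated and of being flat on the right, since $\varphi$ is an isomorphism of rings.

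Then I would verify the assignment on the distinguished objects. For $w \in \bW$, the bimodule $F_w$ is literally $R$ with its standard left action and twisted right action $r' \cdot r \cdot s' = r' r w(s')$; transport along $\varphi$ turns this into $R'$ with the analogous twisted structure, i.e.\ the object $F_w$ in $\sfK(R')$ (here we use again that $\varphi$ intertwines the $\bW$-actions). For the Bott--Samelson generators: since $\varphi$ is $\bW$-equivariant, it sends $R^s$ isomorphically onto $(R')^s$ for each $s \in \bSaff$, so if $(1, \delta_s)$ is an $R^s$-basis of $R$, then $(1, \varphi(\delta_s))$ is a $(R')^s$-basis of $R'$; this verifies~\eqref{eqn:assumption-deltas} for $R'$ and identifies the transport of $B_s = R \otimes_{R^s} R$ with the corresponding $B_s = R' \otimes_{(R')^s} R'$ in $\sfK(R')$.

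Finally, the induced equivalence $\BSK(R) \simto \BSK(R')$ is then forced by the definitions: both categories have the same set of objects (collections $(\omega, s_1, \dots, s_i)$), and the monoidal equivalence already constructed induces bijections on the relevant Hom spaces, so sending each object to itself yields the desired equivalence. No step presents a real obstacle; the only mild point to keep track of is that $\bW$-equivariance of $\varphi$ is used both for the grading condition in~\eqref{eqn:decomp-K-Abe} and for the transport of the rings of invariants $R^s$ used to define $B_s$.
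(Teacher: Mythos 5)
The paper declares this lemma ``obvious'' and gives no proof, and your transport-of-structure argument is precisely the intended verification. Every step checks out: the $\bW$-equivariance of $\varphi$ is correctly invoked both for preserving the condition $m\cdot r = w(r)\cdot m$ on the graded pieces and for carrying $R^s$ onto $(R')^s$ (hence~\eqref{eqn:assumption-deltas} and $B_s$), and the identity on tensor symbols gives the monoidal structure, so your write-up matches the approach the authors had in mind.
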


\subsection{Graded Hecke categories}
\label{ss:graded-Hecke}

The construction of~\S\ref{ss:Hecke-a-la-Abe} admits a ``graded variant'' as follows. In this setting we assume that $R$ is a ($\mathbb{Z}$-)\emph{graded} noetherian domain, and that the action of $\bW$ is by graded ring automorphisms. Then we have a ``grading shift'' functor $(1)$ on graded $R$-bimodules, defined in such a way that $(M(1))^i = M^{i+1}$ for any $i \in \mathbb{Z}$. In this setting we define the category $\sfK'_{\mathrm{gr}}(R)$ as above, but using \emph{graded} $R$-bimodules and morphisms of graded bimodules. (The fraction field $Q$ has no grading, and we impose no compatibility of the decomposition of $M \otimes_R Q$ with the grading.) One then defines the subcategory $\sfK_{\mathrm{gr}}(R)$ in the same way as above. In order to define $B_s$ we assume that there exists a homogeneous element $\delta_s \in R$ such that $(1,\delta_s)$ is a basis of $R$ as an $R^s$-module. Moreover, we set
\[
B_{s} := R \otimes_{R^s} R (1).
\]
Finally, $\BSK_{\mathrm{gr}}(R)$ is defined as the category with: 
\begin{itemize}
\item
objects the collections $(\omega, s_1, \dots, s_i, n)$ with $\omega \in \mathbf{\Omega}$ and $s_1, \dots, s_i \in \bSaff$ and $n \in \Z$;
\item
morphisms from $(\omega, s_1, \dots, s_i,n)$ to $(\omega', s'_1, \dots, s'_j, n')$ given by
\[
\Hom_{\sfK_{\mathrm{gr}}(R)}(F_\omega \star B_{s_1} \star \cdots \star B_{s_i}(n), F_{\omega'} \star B_{s_1'} \star \cdots \star B_{s'_j}(n')).
\]
\end{itemize}
As above we have a canonical fully faithful functor $\BSK_{\mathrm{gr}}(R) \to \sfK_{\mathrm{gr}}(R)$.
Given $M,N$ in $\sfK_{\mathrm{gr}}(R)$, we will set
\[
\Hom^\bullet_{\sfK_{\mathrm{gr}}(R)}(M,N) = \bigoplus_{n \in \Z} \Hom_{\sfK_{\mathrm{gr}}(R)}(M,N(n)).
\]
Again the category $\sfK'_{\mathrm{gr}}(R)$ admits a natural convolution product $\star$, which makes it a monoidal category and stabilizes the subcategory $\sfK_{\mathrm{gr}}(R)$, and which induces a monoidal structure on $\BSK_{\mathrm{gr}}(R)$ given on objects by
\begin{multline*}
 (\omega, s_1, \dots, s_i, n) \star (\omega', s'_1, \dots, s'_j, n') = \\
 (\omega\omega', (\omega')^{-1} s_1 \omega', \dots, (\omega')^{-1} s_i \omega', s'_1, \dots, s'_j, n+n').
\end{multline*}

\subsection{Completed Hecke category and representations of \texorpdfstring{$\bbI_{\mathbf{\Sigma}}^\wedge$}{ISigma}}
\label{ss:completed-Hecke-Rep}

The first ring to which we will apply the construction of~\S\ref{ss:Hecke-a-la-Abe} is $\scO(\bT^\wedge)$, with the action of $\bW$ obtained from the natural action of $\bWf$ by pullback along the projection $\bW \to \bWf$. 
It is explained in~\cite[\S 3.5]{reg-quotient-pt2} that the condition~\eqref{eqn:assumption-deltas} is satisfied for any $s \in \bSaff$ in this case.
The resulting categories $\sfK(\scO(\bT^\wedge))$ and $\BSK(\scO(\bT^\wedge))$ will be denoted
\[
\sfK^\wedge \quad \text{ and } \quad \BSK^{\wedge}
\]
respectively.

Recall the category $\Rep(\bbI_{\mathbf{\Sigma}}^\wedge)$ considered in~\S\ref{ss:completions}. We will denote by
$\Rep_{\mathrm{fl}}(\bbI_{\mathbf{\Sigma}}^\wedge)$ the full subcategory of representations whose underlying coherent sheaf is flat with respect to the projection $(\bT \times_{\bT/\bWf} \bT)^\wedge \to \bT^\wedge$ on the second component. It is not difficult to check that $\Rep_{\mathrm{fl}}(\bbI_{\mathbf{\Sigma}}^\wedge)$ is a monoidal subcategory in $\Rep(\bbI_{\mathbf{\Sigma}}^\wedge)$, and that it contains the essential image of~\eqref{eqn:functor-BSRep-Rep}.

With this definition, it is proved in~\cite[Proposition~3.9]{reg-quotient-pt2} that
there exists a canonical fully faithful monoidal functor
\[
\Rep_{\mathrm{fl}}(\bbI_{\mathbf{\Sigma}}^\wedge) \to \sfK^{\wedge}
\]
sending $\scM_w^\wedge$ to $F_w$ for any $w \in \bW$ and $\scB^\wedge_s$ to $B_s$ for any $s \in \bSaff$.
By monoidality, for any $\omega \in \mathbf{\Omega}$ and $s_1, \dots, s_i \in \bS$ this functor sends $\scM^\wedge_{\omega} \circledast \scB^\wedge_{s_1} \circledast \cdots \circledast \scB^\wedge_{s_i}$
to $F_\omega \star B_{s_1} \star \cdots \star B_{s_i}$; it therefore induces 
an equivalence of monoidal categories
\begin{equation}
\label{eqn:equiv-BSK-BSRep}
 \BSRep(\bbI_{\mathbf{\Sigma}}^\wedge) \simto \BSK^\wedge
\end{equation}
which is the identity on objects.

\subsection{``Additive'' Hecke categories}
\label{ss:additive-Hecke}

From now on, in addition to our running assumptions (see~\S\ref{ss:representations-Iadj}) we will assume that:
\begin{enumerate}
 \item
 $p$ is good for $\bG$;
 \item
 \label{it:bilin-form}
 there exists a $\bG$-equivariant isomorphism $\bg \simto \bg^*$ (which we fix from now on).
\end{enumerate}

\begin{rmk}
\label{rmk:assumptions-Hecke-0}
The first assumption is explicit and mild; the second one holds in particular if $\bG=\mathrm{GL}_n$, and if $p$ is very good for $\bG$, see~\cite[Proposition~2.5.12]{letellier}.
\end{rmk}

We consider the ring $\scO(\bt^*)$, endowed with the grading such that $\bt \subset \scO(\bt^*)$ is placed in degree $2$, and with the action of $\bW$ obtained from the natural action of $\bWf$ by pullback along the projection $\bW \to \bWf$. Conditions~\eqref{eqn:assumption-deltas} are again satisfied in this case; indeed by \cite[Remark~3.7]{reg-quotient-pt2} we can assume that $s \in \bSf$. In this case, if $\alpha$ is the associated simple root, as explained in~\cite[Claim 3.11]{ew} one can take as $\delta_s$ any element $x \in \bt$ such that $d(\alpha)(x)=1$ where $d(\alpha)$ is the differential of $\alpha$. (Such an element does exist since $\rmX^*(\bT)/\Z\fR$ has no $p$-torsion.) The categories $\sfK_{\mathrm{gr}}(\scO(\bt^*))$ and $\BSK_{\mathrm{gr}}(\scO(\bt^*))$ will be denoted
\[
\sfK_\add \quad \text{ and } \quad \BSK_\add.
\]

\begin{rmk}
 The categories $\sfK_\add$ and $\BSK_\add$ are (up to the subtleties related to length-$0$ elements) the categories denoted $\mathcal{C}$ and $\mathcal{BS}$ in~\cite{abe}, for the following data:
\begin{itemize}
 \item the underlying $\bk$-vector space is $V=\bt$;
 \item if $s \in \bSf$, and if $\alpha$ is the simple root associated with $s$, then the ``root'' $\alpha_s \in \bt$ is the differential of $\alpha^\vee$, and the ``coroot'' $\alpha_s^\vee \in \bt^*$ is the differential of $\alpha$;
 \item if $\beta \in \mathfrak{R}_+$ is a maximal short root and $s=\st(\beta) s_\beta$, then the ``root'' $\alpha_s \in \bt$ is the opposite of the differential of $\beta^\vee$, and the ``coroot'' $\alpha_s^\vee \in \bt^*$ is the opposite of the differential of $\beta$. 
\end{itemize}
(As explained in~\cite[\S 2.2]{brHecke}, these data satisfy the technical assumptions imposed in~\cite{abe}.)
\end{rmk}

We will now denote by $(\bt^*)^\wedge$ the spectrum of the completion of $\scO(\bt^*)$ with respect to the ideal $\bt \cdot \scO(\bt^*)$. 
We will consider a third family of categories as in~\S\ref{ss:Hecke-a-la-Abe}, now associated with the ring $\scO((\bt^*)^\wedge)$. To check that conditions~\eqref{eqn:assumption-deltas} hold in this case, one can e.g.~use the following ``additive'' variant of~\cite[Lemma~3.3]{reg-quotient-pt2} (applied to the Levi factor of $\bG$ associated with $s$, when $s \in \bSf$). Here we denote by $(\bt^*/\bWf)^\wedge$ the spectrum of the completion of $\scO(\bt^*/\bWf)$ with respect to the ideal corresponding to the image of $0 \in \bt^*$, we consider the fiber product
\[
\bt^* \times_{\bt^*/\bWf} \bt^*,
\]
and we denote by $(\bt^* \times_{\bt^*/\bWf} \bt^*)^\wedge$ the spectrum of the completion of $\scO(\bt^* \times_{\bt^*/\bWf} \bt^*)$ with respect to the maximal ideal corresponding to $(0,0) \in \bt^* \times_{\bt^*/\bWf} \bt^*$.

\begin{lem}
\phantomsection
\label{lem:Dw-fiber-prod-add}
\begin{enumerate}
\item
\label{it:Dw-fiber-prod-add-1}
There exist canonical isomorphisms of $\bk$-schemes
\[
(\bt^*)^\wedge \cong \bt^* \times_{\bt^*/\bWf} (\bt^*/\bWf)^\wedge
\]
and
\begin{multline*}
(\bt^* \times_{\bt^*/\bWf} \bt^*)^\wedge \cong (\bt^*)^\wedge \times_{\bt^*} (\bt^* \times_{\bt^*/\bWf} \bt^*) \cong (\bt^* \times_{\bt^*/\bWf} \bt^*) \times_{\bt^*} (\bt^*)^\wedge \\
\cong (\bt^* \times_{\bt^*/\bWf} \bt^*) \times_{\bt^* / \bWf} (\bt^*/\bWf)^\wedge \cong (\bt^*)^\wedge \times_{(\bt^*/\bWf)^\wedge} (\bt^*)^\wedge
\end{multline*}
where in the first, resp.~second, fiber product the morphism $\bt^* \times_{\bt^*/\bWf} \bt^* \to \bt^*$ is induced by projection on the first, resp.~second, factor. Moreover, $\scO((\bt^*)^\wedge)$ is finite and free (in particular, flat) over $\scO((\bt^*/\bWf)^\wedge)$.
\item
\label{it:Dw-fiber-prod-add-2}
The natural morphism $\scO((\bt^*/\bWf)^\wedge) \to \scO((\bt^*)^\wedge)^{\bWf}$ is an isomorphism.
\end{enumerate}
\end{lem}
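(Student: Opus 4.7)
The statement is the additive analogue of \cite[Lemma~3.3]{reg-quotient-pt2}, with the multiplicative quotient $\bT \to \bT/\bWf$ replaced by the categorical quotient $\bt^* \to \bt^*/\bWf$, so my plan is to transpose the argument of that lemma into the present setting. Two structural inputs drive everything. First, under our running hypotheses (in particular $p$ good for $\bG$, which ensures that $\rmX^*(\bT)/\Z\fR$ has no $p$-torsion), Demazure's theorem on Weyl-group invariants tells us that $\scO(\bt^*)^{\bWf}$ is a polynomial algebra and $\scO(\bt^*)$ is a finite free module over it; in particular the quotient map $\bt^* \to \bt^*/\bWf$ is finite and flat. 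Second, $0 \in \bt^*$ is the unique $\bWf$-fixed point, hence the unique point of $\bt^*$ lying over the image of $0$ in $\bt^*/\bWf$.

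For part~\eqref{it:Dw-fiber-prod-add-1}, I would set $A := \scO(\bt^*/\bWf) = \scO(\bt^*)^{\bWf}$ and $B := \scO(\bt^*)$, with $\mathfrak{m}_A, \mathfrak{m}_B$ the maximal ideals corresponding to $0$ on each side. Combining finiteness of $B$ over $A$ with the fact that $\mathfrak{m}_B$ is the unique maximal ideal of $B$ lying over $\mathfrak{m}_A$, the $\mathfrak{m}_A$-adic and $\mathfrak{m}_B$-adic topologies on $B$ coincide, whence $\widehat{B} \cong B \otimes_A \widehat{A}$. This is exactly the first asserted isomorphism, and the finiteness and freeness of $\scO((\bt^*)^\wedge)$ over $\scO((\bt^*/\bWf)^\wedge)$ are inherited from those of $B$ over $A$ by base change. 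The remaining identifications of $(\bt^* \times_{\bt^*/\bWf} \bt^*)^\wedge$ are then obtained by applying the same ``single closed fiber + finite'' principle to the finite free $A$-algebra $B \otimes_A B$ (whose unique closed point above $\mathfrak{m}_A$ is $(0,0)$) and rearranging the resulting base-change isomorphism using the first one.

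For part~\eqref{it:Dw-fiber-prod-add-2}, I would exploit flatness of completion: since $\widehat{A}$ is flat over $A$ and taking $\bWf$-invariants is left exact (expressible as an equalizer), tensoring the identity $A = B^{\bWf}$ with $\widehat{A}$ over $A$ and identifying $\scO((\bt^*)^\wedge) \cong B \otimes_A \widehat{A}$ via part~\eqref{it:Dw-fiber-prod-add-1} yields
\[
\scO((\bt^*)^\wedge)^{\bWf} \cong A \otimes_A \widehat{A} = \widehat{A} = \scO((\bt^*/\bWf)^\wedge),
\]
which is the desired isomorphism. The only genuinely non-formal ingredient in the whole argument is the freeness of $\scO(\bt^*)$ over $\scO(\bt^*)^{\bWf}$; this is what makes the good-characteristic hypothesis essential, and it is the single step I expect to be the main obstacle (although it is entirely classical). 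Once that is granted, the rest is routine commutative algebra, parallel step by step to the multiplicative case treated in \cite{reg-quotient-pt2}.
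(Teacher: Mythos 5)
Your treatment of part~\eqref{it:Dw-fiber-prod-add-1} matches the paper's: both transpose the argument of \cite[Lemma~3.3]{reg-quotient-pt2} to the additive setting, replacing the Pittie--Steinberg theorem by Demazure's theorem \cite{demazure} to get that $\scO(\bt^*)$ is finite free over $\scO(\bt^*)^{\bWf}$, and then exploiting the fact that $0$ is the unique point of $\bt^*$ above the image of $0$ in $\bt^*/\bWf$ to identify the various completions via base change. For part~\eqref{it:Dw-fiber-prod-add-2}, however, your route is genuinely different from the paper's. The paper works directly with filtrations: it sets $\cK_\add := \bt\cdot\scO(\bt^*)$, $\cJ_\add := \cK_\add\cap\scO(\bt^*/\bWf)$, observes that $\scO((\bt^*)^\wedge)^{\bWf}$ is the completion of $\scO(\bt^*/\bWf)$ along the contracted filtration $\{(\cK_\add)^n\cap\scO(\bt^*/\bWf)\}$, and then shows this filtration is cofinal with the $\cJ_\add$-adic one by a two-sided containment (one inclusion is trivial, the other uses a bound $(\cK_\add)^N\subset\cJ_\add\cdot\scO(\bt^*)$ together with the $\scO(\bt^*/\bWf)$-linear retraction of $\scO(\bt^*/\bWf)\hookrightarrow\scO(\bt^*)$ furnished by Demazure). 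You instead invoke flatness of the completion $A\to\widehat A$ and left-exactness of $\bWf$-invariants as an equalizer to conclude $(B\otimes_A\widehat A)^{\bWf}\cong B^{\bWf}\otimes_A\widehat A=\widehat A$. Both proofs are correct. Yours is tidier and uses the base-change isomorphism of part~\eqref{it:Dw-fiber-prod-add-1} more directly (note that you implicitly need that isomorphism to be $\bWf$-equivariant, which is immediate from its construction), whereas the paper's is more elementary and self-contained, avoiding flatness of completion in favor of an explicit comparison of ideal powers; either is a legitimate write-up.
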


\begin{proof}
The proof of~\eqref{it:Dw-fiber-prod-add-1} is similar to that of the corresponding claim in~\cite[Lemma~3.3]{reg-quotient-pt2}, replacing the reference to the Pittie-Steinberg theorem to a reference to the main result of~\cite{demazure} (applied to the ``precised'' root system $\mathfrak{R}^\vee$ in $\rmX_*(\bT)$; our assumptions guarantee that $p$ is not a torsion prime for this root system). 

 To prove~\eqref{it:Dw-fiber-prod-add-2}, let us set $\cK_\add := \bt \cdot \scO(\bt^*)$ and $\cJ_\add := \cK_\add \cap \scO(\bt^*/\bWf)$, so that $\scO((\bt^*)^\wedge)$ is the completion of $\scO(\bt^*)$ with respect to $\cK_\add$ and $\scO((\bt^*/\bWf)^\wedge)$ is the completion of $\scO(\bt^*/\bWf)$ with respect to $\cJ_\add$.
It is easily seen that $\scO((\bt^*)^\wedge)^{\bWf}$ is the completion of $\scO(\bt^*/\bWf)$ with respect to the (decreasing) family of ideals
 \[
 \bigl( (\cK_\add)^n \cap \scO(\bt^*/\bWf) : n \in \Z_{\geq 1} \bigr).
 \]
 Now for any $n \geq 1$ we have $(\cJ_\add)^n \subset (\cK_\add)^n \cap \scO(\bt^*/\bWf)$. On the other hand, as in the proof of~\cite[Lemma~3.3]{reg-quotient-pt2} there exists $N$ such that $(\cK_\add)^N \subset \cJ_\add \cdot \scO(\bt^*)$. We deduce that for any $n \geq 1$ we have $(\cK_\add)^{nN} \subset (\cJ_\add)^n \cdot \scO(\bt^*)$, and then since the embedding $\scO(\bt^*/\bWf) \hookrightarrow \scO(\bt^*)$ admits an $\scO(\bt^*/\bWf)$-linear retraction (again by the main result of~\cite{demazure}), we deduce that $(\cK_\add)^{nN} \cap \scO(\bt^*/\bWf) \subset (\cJ_\add)^n$, so that our two completions are isomorphic.
\end{proof}

The categories $\sfK(\scO((\bt^*)^\wedge))$ and $\BSK(\scO((\bt^*)^\wedge))$ will be denoted
\[
\sfK_\add^\wedge \quad \text{ and } \quad \BSK_\add^{\wedge}
\]
respectively.

\subsection{A technical lemma on representations of affine group schemes}

As a preparation for the next subsection, here we prove a technical lemma on (affine) group schemes and their categories of representations.

Given a commutative noetherian ring $R$ (or, equivalently, an affine noetherian scheme $X=\Spec(R)$) and a $R$-Hopf algebra $A$ flat over $R$ (or, equivalently, a flat affine group scheme $H=\Spec(A)$ over $X$), we will denote by $\Rep(H)$ the category of representations of $H$ on coherent $\scO_X$-modules, or in other words the category of $A$-comodules which are finitely generated as $R$-modules. 

If $R$ is endowed with a $\Z$-grading (equivalently, $X$ is equipped with an action of the multiplicative group over $\Z$) and $A$ with a compatible $\Z$-grading (equivalently, $H$ is endowed with an action of the multiplicative group such that the structure morphism $H \to X$, the multiplication morphism $H \times_X H \to H$, the inversion morphism $H \to H$ and the unit section $X \to H$ are equivariant), we will denote by $\Rep^{\Gm}(H)$ the category of equivariant (for the multiplicative group) representations of $H$ on coherent $\scO_X$-modules, or in other words the category of $\Z$-graded $A$-comodules which are finitely generated as $R$-modules. (We will refer to this setting as the ``graded setting.'')
This category admits a ``shift of grading'' functor $(1)$, defined with the same convention as in~\S\ref{ss:graded-Hecke}. 
We have a canonical forgetful functor
\[
\For^{\Gm} : \Rep^{\Gm}(H) \to \Rep(H)
\]
which satisfies $\For^{\Gm} \circ (1) = \For^{\Gm}$.

%

If $R$ and $A$ are as above, with $X=\Spec(R)$, $H=\Spec(A)$, given a commutative noetherian right $R'$ and a ring morphism $R \to R'$, we can set $X':=\Spec(R')$ and consider the group scheme obtained by base change
\[
X' \times_X H = \Spec(R' \otimes_R A)
\]
and its category of representations (finite over $R'$) $\Rep(X' \times_X H)$. We then have a canonical functor
\[
R' \otimes_R (-) : \Rep(H) \to \Rep(X' \times_X H).
\]

\begin{lem}
\label{lem:compatibilities-gp-schemes}
Let $R$ and $A$ be as above, and set $X=\Spec(R)$, $H=\Spec(A)$.
\begin{enumerate}
\item
\label{it:For-Gm-gp-schemes}
Assume we are in the graded setting.
For any $M,M'$ in $\Rep^{\Gm}(H)$, the functor $\For^{\Gm}$ induces an isomorphism
\[
\bigoplus_{n \in \Z} \Hom_{\Rep^{\Gm}(H)}(M,M'(n)) \simto \Hom_{\Rep(H)}(\For^{\Gm}(M), \For^{\Gm}(M')).
\]
\item
\label{it:base-change-gp-schemes}
Assume we are given a commutative noetherian ring $R'$ and a flat morphism $R \to R'$. Then
for any $M,M'$ in $\Rep(H)$, the functor $R' \otimes_R (-)$ induces an isomorphism
\[
R' \otimes_R \Hom_{\Rep(H)}(M,M') \simto \Hom_{\Rep(X' \times_X H)}(R' \otimes_R M, R' \otimes_R M').
\]
\end{enumerate}
\end{lem}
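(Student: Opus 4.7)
For \eqref{it:For-Gm-gp-schemes}, the plan is to exhibit an explicit inverse to the natural map that sends a family $(g_n)_{n \in \Z}$ (with finitely many nonzero terms) to $\sum_n \For^{\Gm}(g_n)$. Given an $A$-comodule morphism $f \colon \For^{\Gm}(M) \to \For^{\Gm}(M')$, I use the homogeneous decompositions $M = \bigoplus_i M^i$ and $M' = \bigoplus_j (M')^j$ to define $f_n \colon M \to M'(n)$ on $m \in M^i$ by $f_n(m) := \pi_{i+n}(f(m))$, where $\pi_j \colon M' \twoheadrightarrow (M')^j$ is the projection. Compatibility of the gradings with the $R$-module structures forces $R$-linearity: if $r \in R^k$ and $m \in M^i$ then $\pi_{i+k+n}(rf(m)) = r\,\pi_{i+n}(f(m))$ because multiplication by $r$ shifts degrees by $k$. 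Compatibility with the comodule structures follows by extracting the degree-$(i+n)$ component of the identity $\rho_{M'} \circ f = (f \otimes \id_A) \circ \rho_M$, using that $\rho_M$ and $\rho_{M'}$ are graded. Hence each $f_n$ defines a morphism in $\Hom_{\Rep^{\Gm}(H)}(M, M'(n))$. Only finitely many $f_n$ are nonzero because $M$ is finitely generated over $R$, hence by finitely many homogeneous elements whose images in $M'$ have only finitely many nonzero homogeneous components. Surjectivity is then immediate from $f(m) = \sum_j \pi_j(f(m))$, and injectivity from the uniqueness of the homogeneous decomposition.

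For \eqref{it:base-change-gp-schemes}, the plan is to use the equalizer description
\[
\Hom_{\Rep(H)}(M,M') = \mathrm{eq}\bigl( \Hom_R(M,M') \rightrightarrows \Hom_R(M, M' \otimes_R A) \bigr),
\]
where the two maps are $f \mapsto \rho_{M'} \circ f$ and $f \mapsto (f \otimes \id_A) \circ \rho_M$. Flatness of $R \to R'$ makes $R' \otimes_R (-)$ exact, so it preserves this equalizer. Since $R$ is noetherian and $M$ is coherent, $M$ is finitely presented, so the canonical map
\[
R' \otimes_R \Hom_R(M, N) \to \Hom_{R'}(R' \otimes_R M, R' \otimes_R N)
\]
is an isomorphism for any $R$-module $N$. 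Applying this with $N = M'$ and $N = M' \otimes_R A$, and using the identification $R' \otimes_R (M' \otimes_R A) \cong (R' \otimes_R M') \otimes_{R'} (R' \otimes_R A)$, the base-changed equalizer is identified with the analogous equalizer computing $\Hom_{\Rep(X' \times_X H)}(R' \otimes_R M, R' \otimes_R M')$.

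The main technical point for \eqref{it:For-Gm-gp-schemes} is the finiteness of the decomposition $f = \sum_n f_n$, which reduces to the fact that any element of a $\Z$-graded $R$-module has finitely many nonzero homogeneous components combined with finite generation of $M$. For \eqref{it:base-change-gp-schemes}, the crux is the interchange of base change with $\Hom_R(M,-)$, where finite presentation of $M$ (guaranteed by coherence and noetherianity of $R$) is essential; everything else is formal exactness of $R' \otimes_R (-)$.
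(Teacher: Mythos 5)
Your proof is correct. For part~\eqref{it:For-Gm-gp-schemes} you take a genuinely different route from the paper: you build the inverse to the comparison map directly, decomposing a morphism $f$ into its homogeneous pieces $f_n$ and checking $R$-linearity, compatibility with the coaction, and finiteness of the family $(f_n)$ by hand. The paper instead argues abstractly: it first proves the statement when $M'$ is a cofree comodule $V \otimes_R A$, using Frobenius reciprocity to reduce to the analogous statement for graded $R$-modules, and then handles a general $M'$ by embedding it into $M' \otimes_R A$ via the coaction and applying the five lemma (this forces the paper to phrase the intermediate claim for possibly non-finitely-generated $M'$). Your direct argument avoids that detour and is arguably cleaner for the statement as given; the paper's reduction to cofree objects is a more standard comodule-theoretic pattern and scales more readily to other contexts. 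For part~\eqref{it:base-change-gp-schemes}, your equalizer formulation is essentially the same proof as the paper's once definitions are unfolded: the paper writes $\Hom_{\Rep(H)}(M,M') = (\Hom_R(M,M'))^H$ and invokes a flatness lemma for $\Hom_R(M,-)$ together with Jantzen's result that fixed points commute with flat base change, which is exactly the preservation of your equalizer diagram; the two ingredients you identify (exactness of $R' \otimes_R (-)$ and finite presentation of $M$) are the same ones the paper uses via its citations.
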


\begin{proof}
\eqref{it:For-Gm-gp-schemes}
We will prove this property when $M'$ is more generally a $\Z$-graded $A$-comodule which is not necessarily finitely generated over $R$. (The category of such objects will be denoted $\Rep^{\Gm}_\infty(H)$.) First, assume that $M' = V \otimes_R A$ for some graded $R$-module $V$ (with the coaction induced by the comultiplication in $A$). Then by Frobenius reciprocity (\cite[Proposition~I.3.4]{jantzen}), for any $n \in \Z$ we have
\[
\Hom_{\Rep_\infty^{\Gm}(H)}(M,M'(n)) \cong \Hom_{\Mod^{\Z}(R)}(M,V(n))
\]
where $\Mod^{\Z}(R)$ is the category of $\Z$-graded $R$-modules, and
\[
\Hom_{\Rep(H)}(\For^{\Gm}(M), \For^{\Gm}(M')) \cong \Hom_{\Mod(R)}(M,V).
\]
Now it is a classical fact that since $M$ is finitely generated over $R$ the forgetful functor induces an isomorphism
\[
\bigoplus_{n \in \Z} \Hom_{\Mod^{\Z}(R)}(M,V(n)) \simto \Hom_{\Mod(R)}(M,V);
\]
the desired claim follows in this case.

The case of a general $\Z$-graded $A$-comodule $M'$ follows from this special case using the five lemma and the fact that for any such $M'$ the coaction defines an injective morphism of $\Z$-graded $A$-comodules $M' \to M' \otimes_R A$, where in the right-hand side $M'$ is regarded as a graded $R$-module.

\eqref{it:base-change-gp-schemes}
As explained e.g.~in~\cite[Lemma~3.8(2)]{brHecke}, the $R$-module $\Hom_R(M,M')$ admits a natural structure of $A$-comodule, and we have
\[
\Hom_{\Rep(H)}(M,M') = \bigl( \Hom_{R}(M,M') \bigr)^H,
\]
where $(-)^H$ is the functor of $H$-fixed points. Similarly, $\Hom_{R'}(R' \otimes_R M, R' \otimes_R M')$ admits a natural structure of $(R' \otimes_R A)$-comodule, and we have
\[
\Hom_{\Rep(X' \times_X H)}(R' \otimes_R M, R' \otimes_R M') = \bigl( \Hom_{R'}(R' \otimes_R M, R' \otimes_R M')\bigr)^{X' \times_X H}
\]
where $(-)^{X' \times_X H}$ is the functor of $(X' \times_X H)$-fixed points. Now we have
\[
\Hom_{R'}(R' \otimes_R M, R' \otimes_R M') = \Hom_{R}(M, R' \otimes_R M'),
\]
and since $R'$ is flat over $R$ we have
\[
\Hom_{R}(M, R' \otimes_R M') = R' \otimes_R \Hom_{R}(M, M')
\]
by~\cite[Lemma~3.8(1)]{brHecke}. Finally by~\cite[Equation~I.2.10(3)]{jantzen}, using again our flatness assumption we have
\[
\bigl( R' \otimes_R \Hom_{R}(M, M') \bigr)^{X' \times_X H} = R' \otimes_R \bigl( \Hom_{R}(M,M') \bigr)^H.
\]
Combining these isomorphisms we deduce the desired claim.
\end{proof}

\subsection{Additive Hecke categories and representations of the (additive) universal centralizer}

From now on we
fix a Kostant section $\bS \subset \bg$ as in~\cite[\S 2.3]{brHecke}, and denote by $\bS^*$ its image under our identification $\bg \simto \bg^*$. Here, a Kostant section is an ``additive'' variant of the Steinberg section $\mathbf{\Sigma}$; what we will use is that $\bS$ is an affine subspace of $\bg$, contained in the open subset of regular elements, and that the composition $\bS \hookrightarrow \bg \to \bg/\bG \cong \bt / \bWf$ is an isomorphism. In particular, from this fact we deduce that the coadjoint quotient provides an isomorphism $\bS^* \simto \bt^*/\bWf$.

The same considerations as in~\S\ref{ss:representations-Iadj} lead to the definition of the universal centralizer group scheme $\bbJ_{\bS^*}$ over $\bS^*$, see~\cite[\S 2.3]{brHecke} for details. This is a smooth affine group scheme over $\bS^*$, endowed with an action of $\Gm$ which is compatible (in the sense considered above Lemma~\ref{lem:compatibilities-gp-schemes}) with the action on $\bt^*/\bWf$ such that the quotient morphism $\bt^* \to \bt^*/\bWf$ is $\Gm$-equivariant, where $t \in \Gm$ acts on $\bt^*$ by multiplication by $t^{-2}$. We can then consider the group scheme
\[
\bbI_{\bS^*} := (\bt^* \times_{\bt^*/\bWf} \bt^*) \times_{\bt^*/\bWf} \bbI_{\bS^*}
\]
and the associated category
\[
\Rep^{\Gm}(\bbI_{\bS^*})
\]
of $\Gm$-equivariant representations on coherent sheaves. This category admits a canonical convolution product defining a monoidal structure. If we denote by
\[
\Rep^{\Gm}_{\mathrm{fl}}(\bbI_{\bS^*})
\]
the full subcategory whose objects are the representations whose underlying coherent sheaves are flat with respect to the second projection $\bt^* \times_{\bt^*/\bWf} \bt^* \to \bt^*$, then this full subcategory is stable under convolution, hence a monoidal category.

On the other hand, set
\[
\bbI_{\bS^*}^\wedge := (\bt^* \times_{\bt^*/\bWf} \bt^*)^\wedge \times_{\bt^* \times_{\bt^*/\bWf} \bt^*} \bbI_{\bS^*}.
\]
Then, once again, the abelian category $\Rep(\bbI_{\bS^*}^\wedge)$ of representations of $\bbI_{\bS^*}^\wedge$ on coherent $\scO_{(\bt^* \times_{\bt^*/\bWf} \bt^*)^\wedge}$-modules admits a canonical convolution product which makes it a monoidal category. If we denote by $\Rep_{\mathrm{fl}}(\bbI_{\bS^*}^\wedge)$ the full subcategory whose objects are the representations whose underlying coherent sheaves are flat with respect to the second projection $(\bt^* \times_{\bt^*/\bWf} \bt^*)^\wedge \to (\bt^*)^\wedge$, then this subcategory is stable under convolution, hence a monoidal category. 

\begin{prop}
\phantomsection
\label{prop:Abe-univ-cent-add}
\begin{enumerate}
\item
\label{eqn:Abe-univ-cent-add-1}
There exists a canonical fully faithful monoidal functor
\begin{equation}
\label{eqn:functor-Abe-Dadd}
\Rep^{\Gm}_{\mathrm{fl}}(\bbI_{\bS^*}) \to \sfK_\add,
\end{equation}
whose essential image contains the objects $B_s$ ($s \in \bSaff$) and $F_w$ ($w \in \bW$).
\item
\label{eqn:Abe-univ-cent-add-2}
There exists a canonical fully faithful monoidal functor
\begin{equation}
\label{eqn:functor-Abe-Daddwedge}
\Rep_{\mathrm{fl}}(\bbI_{\bS^*}^\wedge) \to \sfK_\add^\wedge,
\end{equation}
whose essential image contains the objects $B_s$ ($s \in \bSaff$) and $F_w$ ($w \in \bW$).
\end{enumerate}
\end{prop}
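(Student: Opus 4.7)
The plan is to adapt the proof of the multiplicative analog (namely~\cite[Proposition~3.9]{reg-quotient-pt2} together with the equivalence~\eqref{eqn:equiv-BSK-BSRep}) to the additive setting, substituting our Lemma~\ref{lem:Dw-fiber-prod-add} for~\cite[Lemma~3.3]{reg-quotient-pt2}. I will focus first on part~\eqref{eqn:Abe-univ-cent-add-2}; part~\eqref{eqn:Abe-univ-cent-add-1} will then be obtained by running the same construction with $\scO(\bt^*)$ in place of $\scO((\bt^*)^\wedge)$ and applying Lemma~\ref{lem:compatibilities-gp-schemes}\eqref{it:For-Gm-gp-schemes} to upgrade full faithfulness to the graded level.

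To construct the functor, I will send $M$ in $\Rep_{\mathrm{fl}}(\bbI_{\bS^*}^\wedge)$ to its underlying coherent sheaf, identified via Lemma~\ref{lem:Dw-fiber-prod-add}\eqref{it:Dw-fiber-prod-add-1}--\eqref{it:Dw-fiber-prod-add-2} with an $\scO((\bt^*)^\wedge)$-bimodule on which the two actions of the $\bWf$-invariants coincide and which is flat as a right module. For the decomposition~\eqref{eqn:decomp-K-Abe}, I will tensor with the fraction field $Q$: since $\rmX^*(\bT)/\Z\fR$ has no $p$-torsion, the main result of~\cite{demazure} used in the proof of Lemma~\ref{lem:Dw-fiber-prod-add} implies that $\scO(\bt^*)$ is generically étale of degree $|\bWf|$ over $\scO(\bt^*)^{\bWf}$, so that the $Q$-fiber of $(\bt^* \times_{\bt^*/\bWf} \bt^*)^\wedge$ is a disjoint union of $|\bWf|$ copies of $\Spec(Q)$ indexed by $\bWf$. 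On each sheet I will use the additive counterpart of the canonical group scheme morphism $\bbJ_{\bS^*} \times_{\bt^*/\bWf} \bt^* \to \bT \times \bt^*$ (see~\cite[\S 2.3]{brHecke}) to restrict the $\bbI_{\bS^*}^\wedge$-action to a $\bT$-action and decompose by weights $\lambda \in \rmX^*(\bT)$; combining the $\bWf$-labelling with these weight decompositions produces the splitting indexed by $\bW = \bWf \ltimes \rmX^*(\bT)$. The required $\bW$-semi-invariance $m \cdot r = w(r) \cdot m$ will follow from the explicit twisting of the right action on each sheet together with the identification of $\st(\lambda)$ with the weight shift by $\lambda$.

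The functor is monoidal because in both categories convolution is induced by the tensor product over $\scO((\bt^*)^\wedge)$, compatibly with all decompositions. Full faithfulness will be established along the lines of~\cite[Proposition~3.9]{reg-quotient-pt2}: morphisms in either category identify with $\scO((\bt^*)^\wedge)$-bimodule maps intertwining the coaction of $\scO(\bbI_{\bS^*}^\wedge)$, and such maps are automatically $\bW$-graded so that nothing is lost. The preimages of $F_w$ and $B_s$ in $\Rep_{\mathrm{fl}}(\bbI_{\bS^*}^\wedge)$ will be constructed exactly as in~\S\ref{ss:representations-Iadj}: $F_w$ for $w \in \bWf$ is the structure sheaf of the graph of $w$ in $\bt^* \times_{\bt^*/\bWf} \bt^*$ with trivial $\bbJ_{\bS^*}$-action; $F_{\st(\lambda)}$ is the diagonal pushforward of the pullback of $\scO_{\bt^*} \otimes \bk_{\bt}(\lambda)$ along $\bbJ_{\bS^*} \times_{\bt^*/\bWf} \bt^* \to \bT \times \bt^*$; and $B_s$ for $s \in \bSf$ is the structure sheaf of $\bt^* \times_{\bt^*/\{1,s\}} \bt^*$, then extended to $s \in \bSaff \smallsetminus \bSf$ by the analog of~\eqref{eqn:def-Bs}.

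The main technical obstacle will be verifying the $\bW$-semi-invariance for the decomposition of $M \otimes Q$, namely matching the $\bWf$-labelling of sheets of $\bt^* \times_{\bt^*/\bWf} \bt^*$ with the $\bT$-weight shifts coming from $\bbJ_{\bS^*} \times_{\bt^*/\bWf} \bt^* \to \bT \times \bt^*$ in a way that realizes the semidirect product $\bW = \bWf \ltimes \rmX^*(\bT)$. This is the additive analog of the corresponding check in~\cite[Proposition~3.9]{reg-quotient-pt2}, where the multiplicative structure on $\bT$ plays the structural role that the combination of the étale cover $\bt^* \to \bt^*/\bWf$ and the $\bT$-weight grading plays here.
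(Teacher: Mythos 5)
Your proposal is essentially correct and follows the same strategy as the paper: the paper's own proof of part~\eqref{eqn:Abe-univ-cent-add-2} simply says ``the proof is similar to that of~\cite[Proposition~3.9]{reg-quotient-pt2},'' which is exactly the adaptation you describe, with Lemma~\ref{lem:Dw-fiber-prod-add} replacing~\cite[Lemma~3.3]{reg-quotient-pt2}. For part~\eqref{eqn:Abe-univ-cent-add-1}, however, the paper cites it as already proved in~\cite[Proposition~2.7 and Lemma~2.9]{brHecke}, whereas you propose to reconstruct it by running the non-graded, non-completed version of the same argument and then upgrading to the graded level via Lemma~\ref{lem:compatibilities-gp-schemes}\eqref{it:For-Gm-gp-schemes}. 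That step is more delicate than you acknowledge: Lemma~\ref{lem:compatibilities-gp-schemes}\eqref{it:For-Gm-gp-schemes} provides the degrading isomorphism only on the $\Rep^{\Gm}$ side, and to deduce graded full faithfulness from ungraded full faithfulness one also needs the analogous degrading property for the forgetful functor $\sfK_\add \to \sfK(\scO(\bt^*))$, namely that $\bigoplus_{n} \Hom_{\sfK_\add}(M, M'(n)) \to \Hom_{\sfK(\scO(\bt^*))}(M, M')$ is an isomorphism for objects coming from $\BSK_\add$. This is plausible (it is the standard property of finitely generated graded modules, and the decomposition~\eqref{eqn:decomp-K-Abe} lives over the unrelated field $Q$, so does not interfere), but it is not stated anywhere in the paper, so your proof would need to supply it. The more direct approach taken in~\cite{brHecke} is simply to build the functor $\Rep^{\Gm}_{\mathrm{fl}}(\bbI_{\bS^*}) \to \sfK_\add$ already at the graded level — the underlying bimodule of a $\Gm$-equivariant representation is automatically $\Z$-graded — and then prove full faithfulness graded-degree by graded-degree, which avoids this extra verification.
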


\begin{proof}
\eqref{eqn:Abe-univ-cent-add-1}
This statement is proved in~\cite[Proposition~2.7 and Lemma~2.9]{brHecke}.

\eqref{eqn:Abe-univ-cent-add-2}
The proof is similar to that of~\cite[Proposition~3.9]{reg-quotient-pt2}.
\end{proof}

More specifically, one can define \emph{canonical} objects in the category $\Rep^{\Gm}_{\mathrm{fl}}(\bbI_{\bS^*})$, resp.~in $\Rep_{\mathrm{fl}}(\bbI_{\bS^*}^\wedge)$, whose image under~\eqref{eqn:functor-Abe-Dadd}, resp.~\eqref{eqn:functor-Abe-Daddwedge}, are the corresponding objects $B_s$ and $F_w$. Using these objects one obtains that the functors~\eqref{eqn:functor-HBS-K} in these two settings factor through (fully faithful) monoidal functors
\[
\BSK_\add \to \Rep^{\Gm}_{\mathrm{fl}}(\bbI_{\bS^*}), \qquad
\BSK_\add^\wedge \to \Rep_{\mathrm{fl}}(\bbI_{\bS^*}^\wedge).
\]
Using the second of these functors one can define a category $\BSRep(\bbI_{\bS^*}^\wedge)$ of ``Bott--Samelson type'' representations of $\bbI_{\bS^*}^\wedge$, with objects the collections $(\omega, s_1, \dots, s_i)$ with $\omega \in \mathbf{\Omega}$ and $s_1, \dots, s_i \in \bSaff$, and which is canonically equivalent to $\BSK_\add^\wedge$. One can also define the category $\SRep(\bbI_{\bS^*}^\wedge)$ of ``Soergel type'' representations as the karoubian closure of the additive hull of $\BSRep(\bbI_{\bS^*}^\wedge)$; equivalently, this category identifies with the full subcategory of $\Rep_{\mathrm{fl}}(\bbI_{\bS^*}^\wedge)$ whose objects are direct sums of direct summands of objects in the image of $\BSK_\add^\wedge$.

We deduce from Proposition~\ref{prop:Abe-univ-cent-add} the following property. (We expect this proposition to admit a direct algebraic proof, but the proof given below relies on geometry and Proposition~\ref{prop:Abe-univ-cent-add}.)

 \begin{prop}
\label{prop:relation-Hecke-cat-add}
There exists a monoidal functor
 \[
  \mathsf{F} : \sfK_\add \to \sfK_\add^{\wedge}
 \]
 which satisfies $\mathsf{F} \circ (1) = \mathsf{F}$ and sends each object $F_w$ ($w \in \bW$) and $B_s$ ($s \in \bSaff$) in $\sfK_\add$ to the corresponding object in $\sfK_\add^\wedge$, and such that $\mathsf{F}$ induces an isomorphism
\[
  \Hom^\bullet_{\BSK_\add}(M,M') \otimes_{\scO(\bt^*)} \scO((\bt^*)^\wedge) \simto \Hom_{\BSK_\add^\wedge}(\mathsf{F}(M),\mathsf{F}(M'))
\]
for any
$M,M'$ in $\BSK_\add$.
\end{prop}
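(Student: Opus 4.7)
My plan is to define $\mathsf{F}$ as a completion (base-change) functor and then deduce the $\Hom$ isomorphism on BS objects by transporting the question to the representation categories of Proposition~\ref{prop:Abe-univ-cent-add} and invoking both parts of Lemma~\ref{lem:compatibilities-gp-schemes}.

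For $M \in \sfK_\add$, I would forget the grading and set
\[
\mathsf{F}(M) := M \otimes_{\scO(\bt^*/\bWf)} \scO((\bt^*/\bWf)^\wedge),
\]
which is unambiguous because the left and right $\scO(\bt^*/\bWf)$-actions on $M$ coincide. Using the identification $\scO((\bt^*)^\wedge)=\scO(\bt^*)\otimes_{\scO(\bt^*/\bWf)}\scO((\bt^*/\bWf)^\wedge)$ from Lemma~\ref{lem:Dw-fiber-prod-add}\eqref{it:Dw-fiber-prod-add-1}, this acquires a canonical $\scO((\bt^*)^\wedge)$-bimodule structure. Flatness of the completion map $\scO(\bt^*/\bWf)\to\scO((\bt^*/\bWf)^\wedge)$ preserves finite generation and right-flatness, and the $\bW$-decomposition of $M\otimes_{\scO(\bt^*)} Q$ base-changes through the natural map $Q\to Q^\wedge$ to give the required decomposition of $\mathsf{F}(M)\otimes_{\scO((\bt^*)^\wedge)} Q^\wedge$, so $\mathsf{F}(M)\in\sfK_\add^\wedge$. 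Monoidality amounts to the canonical isomorphism
\[
\mathsf{F}(M)\otimes_{\scO((\bt^*)^\wedge)}\mathsf{F}(N) \,\simeq\, (M\otimes_{\scO(\bt^*)} N)\otimes_{\scO(\bt^*/\bWf)}\scO((\bt^*/\bWf)^\wedge),
\]
a direct verification from the universal property and the centrality of $\scO((\bt^*/\bWf)^\wedge)$ in $\scO((\bt^*)^\wedge)$. The identities $\mathsf{F}(F_w)=F_w$, $\mathsf{F}(B_s)=B_s$ and $\mathsf{F}\circ(1)=\mathsf{F}$ then follow directly from the defining formulas (for $B_s$, one uses that the $s$-action on $\scO((\bt^*)^\wedge)$ is trivial on $\scO((\bt^*/\bWf)^\wedge)$, giving $(\scO((\bt^*)^\wedge))^s=\scO(\bt^*)^s\otimes_{\scO(\bt^*/\bWf)}\scO((\bt^*/\bWf)^\wedge)$).

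For the $\Hom$ isomorphism, fix $M,M'\in\BSK_\add$ and let $\widetilde M,\widetilde M'\in \Rep^{\Gm}_{\mathrm{fl}}(\bbI_{\bS^*})$ be their canonical preimages from Proposition~\ref{prop:Abe-univ-cent-add}\eqref{eqn:Abe-univ-cent-add-1}; by construction their base changes along the flat morphism $(\bt^*\times_{\bt^*/\bWf}\bt^*)^\wedge\to\bt^*\times_{\bt^*/\bWf}\bt^*$ are the canonical preimages of $\mathsf{F}(M),\mathsf{F}(M')$ in $\Rep_{\mathrm{fl}}(\bbI^\wedge_{\bS^*})$. Lemma~\ref{lem:compatibilities-gp-schemes}\eqref{it:For-Gm-gp-schemes} provides
\[
\Hom^\bullet_{\BSK_\add}(M,M')\;\cong\;\Hom_{\Rep(\bbI_{\bS^*})}\bigl(\For^{\Gm}\widetilde M,\For^{\Gm}\widetilde M'\bigr),
\]
and Lemma~\ref{lem:compatibilities-gp-schemes}\eqref{it:base-change-gp-schemes}, applied to the flat ring map $\scO(\bt^*\times_{\bt^*/\bWf}\bt^*)\to\scO((\bt^*\times_{\bt^*/\bWf}\bt^*)^\wedge)$, identifies the base change of this Hom with $\Hom_{\BSK_\add^\wedge}(\mathsf{F}(M),\mathsf{F}(M'))$. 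To match the one-sided tensor $(-)\otimes_{\scO(\bt^*)}\scO((\bt^*)^\wedge)$ in the statement with the two-sided completion produced by Lemma~\ref{lem:compatibilities-gp-schemes}\eqref{it:base-change-gp-schemes}, one uses that the two $\scO(\bt^*)^{\bWf}$-actions on the Hom space coincide, so both tensor products reduce to $(-)\otimes_{\scO(\bt^*/\bWf)}\scO((\bt^*/\bWf)^\wedge)$ via the fiber-product description in Lemma~\ref{lem:Dw-fiber-prod-add}\eqref{it:Dw-fiber-prod-add-1}.

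The main obstacle is really the bookkeeping at two points: first, verifying that the canonical preimages in $\Rep_{\mathrm{fl}}(\bbI^\wedge_{\bS^*})$ produced by Proposition~\ref{prop:Abe-univ-cent-add}\eqref{eqn:Abe-univ-cent-add-2} are the base changes along $(\bt^*\times_{\bt^*/\bWf}\bt^*)^\wedge\to\bt^*\times_{\bt^*/\bWf}\bt^*$ of those from Proposition~\ref{prop:Abe-univ-cent-add}\eqref{eqn:Abe-univ-cent-add-1} (clear upon unwinding the parallel construction but requiring care); and second, reconciling the two flavors of tensor product on the two sides of the sought isomorphism. Both reductions rest on the coincidence of left and right $\scO(\bt^*/\bWf)$-actions on objects of $\BSK_\add$ and on the fiber-product identifications of Lemma~\ref{lem:Dw-fiber-prod-add}.
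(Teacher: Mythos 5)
Your proposal matches the paper's proof in essence: the functor is base change to the completion (your $\otimes_{\scO(\bt^*/\bWf)}\scO((\bt^*/\bWf)^\wedge)$ agrees with the paper's $\otimes_{\scO(\bt^*)}\scO((\bt^*)^\wedge)$ via Lemma~\ref{lem:Dw-fiber-prod-add}), and the Hom isomorphism is transported through the equivalences of Proposition~\ref{prop:Abe-univ-cent-add} and settled by Lemma~\ref{lem:compatibilities-gp-schemes}, exactly as in the paper, with your spelling out of the two parts of that lemma and the bookkeeping between one-sided and symmetric tensor products being the same reasoning the paper leaves implicit.
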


\begin{proof}
The functor $\mathsf{F}$ is defined by
\[
\mathsf{F}(M) = M \otimes_{\scO(\bt^*)} \scO((\bt^*)^\wedge),
\]
where in the right-hand side we omit the functor forgetting the $\Z$-grading. Let us first explain why this indeed defines a functor from $\sfK_\add$ to $\sfK_\add^{\wedge}$. Here since $M$ is an object in $\sfK_\add$, it admits in particular an action of $\scO(\bt^* \times_{\bt^*/\bWf} \bt^*)$. Hence $M \otimes_{\scO(\bt^*)} \scO((\bt^*)^\wedge)$ admits an action of
\[
\scO(\bt^* \times_{\bt^*/\bWf} \bt^*) \otimes_{\scO(\bt^*)} \scO((\bt^*)^\wedge),
\]
which identifies with $\scO((\bt^* \times_{\bt^*/\bWf} \bt^*)^\wedge)$ by Lemma~\ref{lem:Dw-fiber-prod-add}. This object can therefore be regarded as a (finitely generated) $\scO((\bt^*)^\wedge)$-bimodule. On the other hand, we have
\begin{multline*}
\bigl( M \otimes_{\scO(\bt^*)} \scO((\bt^*)^\wedge) \bigr) \otimes_{\scO((\bt^*)^\wedge)} \mathrm{Frac}(\scO((\bt^*)^\wedge)) \cong \\
 \bigl( M \otimes_{\scO(\bt^*)} \mathrm{Frac}(\scO(\bt^*)) \bigr) \otimes_{\mathrm{Frac}(\scO(\bt^*))} \mathrm{Frac}(\scO((\bt^*)^\wedge)).
\end{multline*}
Here we are given a decomposition of $M \otimes_{\scO(\bt^*)} \mathrm{Frac}(\scO(\bt^*))$ parametrized by $\bW$, which induces a decomposition of $\bigl( M \otimes_{\scO(\bt^*)} \scO((\bt^*)^\wedge) \bigr) \otimes_{\scO((\bt^*)^\wedge)} \mathrm{Frac}(\scO((\bt^*)^\wedge))$ parametrized by $\bW$. Finally $M \otimes_{\scO(\bt^*)} \scO((\bt^*)^\wedge)$ is flat over $\scO((\bt^*)^\wedge)$ for the action on the right, hence it indeed admits a canonical structure of object in $\sfK_\add^{\wedge}$.

It is easily checked that $\mathsf{F}$ has a canonical monoidal structure, and the required action on the objects $F_w$ and $B_s$.
To check that this functor has the required property on morphism spaces,
we consider the equivalences of Proposition~\ref{prop:Abe-univ-cent-add}, and the functor of pullback under the natural morphism $(\bt^* \times_{\bt^*/\bWf} \bt^*)^\wedge \to \bt^* \times_{\bt^*/\bWf} \bt^*$ (and forgetting the grading). This defines a natural monoidal functor
\begin{equation}
\label{eqn:functor-Dadd-Daddwedge}
\Rep^{\Gm}_{\mathrm{fl}}(\bbI_{\bS^*}) \to \Rep_{\mathrm{fl}}(\bbI_{\bS^*}^\wedge)
\end{equation}
and, in view of the identification $\scO((\bt^* \times_{\bt^*/\bWf} \bt^*)^\wedge) \cong \scO(\bt^* \times_{\bt^*/\bWf} \bt^*) \otimes_{\scO(\bt^*)} \scO((\bt^*)^\wedge)$, the diagram
\[
\xymatrix@C=2cm{
\Rep^{\Gm}_{\mathrm{fl}}(\bbI_{\bS^*}) \ar[d]_-{\eqref{eqn:functor-Dadd-Daddwedge}} \ar[r]^-{\eqref{eqn:functor-Abe-Dadd}} & \sfK_\add \ar[d]^-{\mathsf{F}} \\
\Rep_{\mathrm{fl}}(\bbI_{\bS^*}^\wedge) \ar[r]^-{\eqref{eqn:functor-Abe-Daddwedge}} & \sfK_\add^\wedge
}
\]
commutes. The desired property of $\mathsf{F}$ therefore follows from the corresponding property of the functor~\eqref{eqn:functor-Dadd-Daddwedge}, which itself follows from Lemma~\ref{lem:compatibilities-gp-schemes} and the identification $\scO((\bt^* \times_{\bt^*/\bWf} \bt^*)^\wedge) \cong \scO(\bt^* \times_{\bt^*/\bWf} \bt^*) \otimes_{\scO(\bt^*)} \scO((\bt^*)^\wedge)$.
\end{proof}

\subsection{Relation between the ``multiplicative'' and ``additive'' Hecke categories}
\label{ss:relation-mult-add}

Finally we explain the relation between the (completed) ``additive'' and ``multiplicative'' Hecke categories.

\begin{lem}
\label{lem:Abe-add-mul}
Assume that there exists an \'etale (in particular, central) isogeny $\bG' \to \bG$ and a morphism $\bG' \to \mathrm{Lie}(\bG')$ which is $\bG'$-equivariant (for the adjoint actions), sends $e$ to $0$, and is \'etale at $e$.
Then there exists a $\bWf$-equivariant isomorphism $\scO((\bt^*)^\wedge) \simto \scO(\bT^\wedge)$, from which we obtain an equivalence of monoidal categories 
\[
\sfK_\add^{\wedge} \cong \sfK^{\wedge} 
\]
sending each object $F_w$ ($w \in \bW$) or $B_s$ ($s \in \bSaff$) in $\sfK_\add^{\wedge}$ to the corresponding object in $\sfK^{\wedge}$. As a consequence, we obtain an equivalence of monoidal categories
\[
\BSK_\add^{\wedge} \cong \BSK^{\wedge}
\]
which is the identity on objects.
\end{lem}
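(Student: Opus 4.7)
The plan is to construct the required $\bWf$-equivariant ring isomorphism $\scO((\bt^*)^\wedge) \simto \scO(\bT^\wedge)$ by composing three intermediate isomorphisms, and then simply invoke Lemma~\ref{lem:isom-equiv-Abe-cat}, since by construction both actions of $\bW$ factor through $\bWf$.

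First, let $\bT' \subset \bG'$ be the preimage of $\bT$ under the étale isogeny $\bG' \to \bG$; then $\bT'$ is a maximal torus of $\bG'$ and $\bT' \to \bT$ is an étale isogeny, so after completion at the identity it induces an isomorphism of formal schemes $(\bT')^\wedge \simto \bT^\wedge$. Since the isogeny $\bG' \to \bG$ intertwines the Weyl group actions (via the natural identification $\mathrm{N}_{\bG'}(\bT')/\bT' \cong \mathrm{N}_\bG(\bT)/\bT = \bWf$), this first isomorphism is $\bWf$-equivariant.

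Second, the given $\bG'$-equivariant morphism $f \colon \bG' \to \mathrm{Lie}(\bG')$ sends $e$ to $0$, and since the étale isogeny identifies $\mathrm{Lie}(\bG')$ with $\bg$, we may view $f$ as a $\bG'$-equivariant morphism $\bG' \to \bg$. Restricting to the subvariety of $\bT'$-fixed points (for the conjugation, resp.~adjoint, action) we obtain a morphism $\bT' \to \bt$, which is $\mathrm{N}_{\bG'}(\bT')$-equivariant and hence $\bWf$-equivariant. Since $f$ is étale at $e$, the induced map on tangent spaces at the identity is an isomorphism of $\bt$ with itself; restricting to the torus, one checks the same holds for $\bT' \to \bt$ at $e$, and therefore after completion we obtain a $\bWf$-equivariant isomorphism $(\bT')^\wedge \simto \bt^\wedge$ of formal schemes.

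Third, the fixed $\bG$-equivariant isomorphism $\bg \simto \bg^*$ (assumption~\eqref{it:bilin-form} of~\S\ref{ss:additive-Hecke}) is in particular $\bT$-equivariant, hence restricts to an isomorphism between the $\bT$-fixed parts, namely a $\bWf$-equivariant isomorphism $\bt \simto \bt^*$; completing at the origin yields a $\bWf$-equivariant isomorphism $\bt^\wedge \simto (\bt^*)^\wedge$. Composing the three isomorphisms gives the required $\bWf$-equivariant ring isomorphism $\scO((\bt^*)^\wedge) \simto \scO(\bT^\wedge)$. The actions of $\bW$ on both rings are by construction pulled back along $\bW \twoheadrightarrow \bWf$, so this is automatically a $\bW$-equivariant ring isomorphism. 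Applying Lemma~\ref{lem:isom-equiv-Abe-cat} then yields the equivalence $\sfK^\wedge_\add \cong \sfK^\wedge$ sending $F_w$ to $F_w$ and $B_s$ to $B_s$, and the induced equivalence $\BSK^\wedge_\add \cong \BSK^\wedge$ which is the identity on objects. The only nontrivial verification is the compatibility of the various equivariance structures under restriction and completion; this is the ``main obstacle,'' but should be routine given the explicit construction of the intermediate maps.
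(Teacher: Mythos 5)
Your proof is correct and follows essentially the same route as the paper: pass to $\bT' \subset \bG'$ via the \'etale isogeny, restrict the given $\bG'$-equivariant map to $\bT'$-fixed points and complete, and use the fixed $\bG$-equivariant isomorphism $\bg \simto \bg^*$ on the torus part; then conclude with Lemma~\ref{lem:isom-equiv-Abe-cat}. The only cosmetic difference is that you transport $\mathrm{Lie}(\bG')$ to $\bg$ at the outset whereas the paper keeps $\mathrm{Lie}(\bT')$ explicit until the final step, and you helpfully make explicit the (implicit) remark that $\bWf$-equivariance suffices because the $\bW$-action on both rings factors through $\bWf$.
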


\begin{proof}
By assumption there exists a $\bG$-equivariant isomorphism $\bg \simto \bg^*$; if one identifies $\bt^*$ with the subspace of $\bg^*$ consisting of linear forms vanishing on each root subspace, then this isomorphism must restrict to an isomorphism from $\bt=\bg^{\bT}$ to $\bt^* = (\bg^*)^\bT$, which is $\bWf$-invariant. To construct our isomorphism it therefore suffices to construct a $\bWf$-equivariant isomorphism from $\scO(\bT^\wedge)$ to the completion of $\scO(\bt)$ with respect to the ideal corresponding to $0$.

Consider now an isogeny $\bG' \to \bG$ as in the statement.
If $\bT' \subset \bG'$ is the preimage of $\bT$, then $\bT'$ is a maximal torus in $\bG'$, and our isogeny restricts to an \'etale morphism $\bT' \to \bT$ sending $e$ to $e$. It therefore induces an isomorphism between $\scO(\bT^\wedge)$ and the completion $\scO(\bT')^\wedge$ of $\scO(\bT')$ with respect to the ideal corresponding to $e$, and also an isomorphism from $\mathrm{Lie}(\bT')$ to $\bt$. The Weyl group of $(\bG',\bT')$ canonically identifies with $\bWf$, and both of our isomorphisms are $\bWf$-equivariant.

Our morphism $\bG' \to \mathrm{Lie}(\bG')$ must restrict to a $\bWf$-equivariant morphism from $\bT' = (\bG')^{\bT'}$ to $\mathrm{Lie}(\bT') = (\mathrm{Lie}(\bG'))^{\bT'}$. Moreover, this morphism sends $e$ to $0$ and is \'etale at $e$ (e.g.~by consideration of tangent spaces). It therefore induces a $\bWf$-equivariant isomorphism between $\scO(\bT')^\wedge$ and the completion of $\scO(\mathrm{Lie}(\bT'))$ with respect to the ideal corresponding to $0$. Combining these isomorphisms we deduce the desired isomorphism
\[
\scO((\bt^*)^\wedge) \simto \scO(\bT^\wedge).
\]
Once this isomorphism is constructed, we deduce the desired equivalences using Lemma~\ref{lem:isom-equiv-Abe-cat}.
 \end{proof}
 
 From Lemma~\ref{lem:Abe-add-mul}, together with the equivalence~\eqref{eqn:equiv-BSK-BSRep} and its analogue deduced from Proposition~\ref{prop:Abe-univ-cent-add}\eqref{eqn:Abe-univ-cent-add-2}, we obtain an equivalence of additive monoidal categories
 \begin{equation}
  \SRep(\bbI_{\mathbf{\Sigma}}^\wedge) \simto \SRep(\bbI_{\bS^*}^\wedge).
 \end{equation}
 
 \begin{rmk}
 \label{rmk:assumptions-Hecke}
The assumption in Lemma~\ref{lem:Abe-add-mul} holds at least in the following cases:
\begin{enumerate}
\item
$\bG=\mathrm{GL}_n(\bk)$;
\item
$p$ is very good.
\end{enumerate}
In fact, in the first case one can take $\bG'=\bG$, with the morphism $\mathrm{GL}_n(\bk) \to \mathfrak{gl}_n(\bk)$ given by $X \mapsto X-\mathrm{I}_n$. For the second case one observes first that if $p$ is very good and $\bG$ is semisimple (and simply connected) then there exists a morphism $\bG \to \bg$ sending $e$ to $0$ and \'etale at $e$; see~\cite[Remark~8.1]{reg-quotient-pt3} for details. The similar claim of course also holds if $\bG$ is a torus. Finally, for a general $\bG$, as explained in~\cite[\S 1.18]{jantzen} there exists a torus $\mathbf{H}$ and an isogeny $\mathscr{D}(\bG) \times \mathbf{H} \to \bG$ (where $\mathscr{D}(\bG)$ is the derived subgroup of $\bG$) whose kernel is a subgroup of the center of $\mathscr{D}(\bG)$. Since $p$ is very good this center is a discrete group, hence this kernel is smooth, proving that the isogeny is \'etale. One can therefore take $\bG'=\mathscr{D}(\bG) \times \mathbf{H}$.
\end{rmk}

\section{Tilting perverse perverse on affine flag varieties}
\label{sec:tilting}


\subsection{Sheaves on affine flag varieties}
\label{ss:sheaves-Fl}

Now we fix an algebraically closed field $\F$ of positive characteristic and a connected reductive algebraic group $G$ over $\F$. We also choose a Borel subgroup $B \subset G$ and a maximal torus $T \subset B$. To $G$ one can associate its loop group $\Loop G$ (a group ind-scheme over $\F$) and arc group $\Loop^+ G$ (a group scheme over $\F$, not of finite type unless $G$ is the trivial group). For definitions, see e.g.~\cite[\S 4.1]{reg-quotient-pt2}. The associated Iwahori subgroup $\Iw \subset \Loop^+ G$ is the preimage of $B$ under the canonical morphism $\Loop^+ G \to G$. The prounipotent radical $\Iwu$ of $\Iw$ is then the preimage of the unipotent radical $U$ of $B$ under this map. We consider the affine flag variety $\Fl_G$, namely the ind-scheme over $\F$ defined as
\[
 \Fl_G = (\Loop G / \Iw)_{\mathrm{fppf}}.
\]
We also have a canonical $T$-torsor over this ind-scheme, defined as
\[
 \tFl_G = (\Loop G / \Iwu)_{\mathrm{fppf}}.
\]
The map realizing this torsor will be denoted $\pi : \tFl_G \to \Fl_G$. Finally, the affine Grassmannian $\Gr_G$ is the ind-scheme defined as
\[
 \Gr_G = (\Loop G / \Loop^+ G)_{\mathrm{fppf}}.
\]

We now choose a prime number $p$ invertible in $\F$ and denote by $\bk$ an algebraic closure of the finite field $\F_p$.
We will consider the following categories:
\begin{itemize}
\item the (\'etale) $\Loop^+ G$-equivariant derived category of $\bk$-sheaves on $\Gr_G$, denoted $\sfD_{\Loop^+ G,\Loop^+ G}$;
 \item the (\'etale) $\Iw$-equivariant derived category of $\bk$-sheaves on $\Fl_G$, denoted $\sfD_{\Iw,\Iw}$;
 \item the (\'etale) $\Iwu$-equivariant derived category of $\bk$-sheaves on $\Fl_G$, denoted $\sfD_{\Iwu,\Iw}$;
 \item the (\'etale) completed $\Iwu$-equivariant derived category of $\bk$-sheaves on $\tFl_G$  which are constructible with respect to the stratification by $\Iw$-orbits, denoted $\sfD^\wedge_{\Iwu,\Iwu}$.
\end{itemize}
Here the fourth case relies on constructions due to Yun in an appendix to~\cite{by}; see also~\cite{bezr} or~\cite[\S 6.1]{reg-quotient-pt2}. The structures on these categories that will be used below are the following.
\begin{enumerate}
\item 
 The category $\sfD_{\Loop^+ G,\Loop^+ G}$ admits a canonical monoidal structure, with product given by the convolution bifunctor, denoted $\star_{\Loop^+ G}$, see e.g.~\cite[\S 4.3]{reg-quotient-pt2}.
 \item 
 The category $\sfD_{\Iw,\Iw}$ admits a canonical monoidal structure, with product given by the convolution bifunctor, denoted $\star_{\Iw}$, see e.g.~\cite[\S 4.2]{reg-quotient-pt2}.
 \item
 The category $\sfD^\wedge_{\Iwu,\Iwu}$ admits a canonical monoidal structure, with product given by the convolution bifunctor, denoted $\hatstar$, see~\cite[\S 6.1]{reg-quotient-pt2}.
 \item
 The category $\sfD_{\Iwu,\Iw}$ admits a canonical left action of $\sfD^\wedge_{\Iwu,\Iwu}$, and a canonical commuting right action of $\sfD_{\Iw,\Iw}$. These actions are given by convolution, and the corresponding bifunctors will also be denoted $\hatstar$ and $\star_{\Iw}$.
 \item
 We have a ``forgetful'' functor $\For^{\Iw}_{\Iwu} : \sfD_{\Iw,\Iw} \to \sfD_{\Iwu,\Iw}$.
 \item
 There exists a canonical functor $\pi_\dag : \sfD^\wedge_{\Iwu,\Iwu} \to \sfD_{\Iwu,\Iw}$; see again~\cite[\S 6.1]{reg-quotient-pt2}.
 \item
 Let $T^\vee_\bk$ be the $\bk$-torus whose lattice of characters is $\rmX_*(T)$, and let $(T^\vee_\bk)^\wedge$ be the spectrum of the completion of the ring $\scO(T^\vee_\bk)$ with respect to the maximal ideal corresponding to the unit element. Then the category $\sfD^\wedge_{\Iwu,\Iwu}$ is naturally enriched in right modules over $\scO((T^\vee_\bk)^\wedge)$, via right monodromy. Moreover, for $\scF$, $\scG$ in $\sfD^\wedge_{\Iwu,\Iwu}$, the morphism induced by $\pi_\dag$ factors through a morphism
 \begin{equation}
 \label{eqn:Hom-pidag}
  \Hom_{\sfD^\wedge_{\Iwu,\Iwu}}(\scF,\scG) \otimes_{\scO((T^\vee_\bk)^\wedge)} \bk \to \Hom_{\sfD_{\Iwu,\Iw}}(\pi_\dag \scF, \pi_\dag \scG),
 \end{equation}
 where $\bk$ is seen as the quotient of $\scO((T^\vee_\bk)^\wedge)$ by its unique maximal ideal.
 \end{enumerate}

Each of the categories $\sfD_{\Loop^+ G,\Loop^+ G}$, $\sfD_{\Iw,\Iw}$, $\sfD_{\Iwu,\Iw}$, $\sfD^\wedge_{\Iwu,\Iwu}$ admits a ``perverse'' t-struc\-ture, such that the functor $\For^{\Iw}_{\Iwu}$ is t-exact; the corresponding hearts will be denoted $\sfP_{\Loop^+ G,\Loop^+ G}$, $\sfP_{\Iw,\Iw}$, $\sfP_{\Iwu,\Iw}$, $\sfP^\wedge_{\Iwu,\Iwu}$. It is a standard fact (see~\cite{mv} for the original reference, and~\cite{br} for a survey) that the subcategory $\sfP_{\Loop^+ G,\Loop^+ G}$ is stable under the convolution product $\star_{\Loop^+ G}$. Moreover, if we denote by $G^\vee_\bk$ the connected reductive algebraic group over $\bk$ which is Langlands dual to $G$, then there exists a canonical equivalence of monoidal categories
\begin{equation}
\label{eqn:Satake-equiv}
 (\sfP_{\Loop^+ G,\Loop^+ G}, \star_{\Loop^+ G}) \cong (\Rep(G^\vee_\bk), \otimes)
\end{equation}
where the right-hand side is the category of finite-dimensional algebraic representations of $G^\vee_\bk$. 
By definition of the Langlands dual group, $T^\vee_\bk$ identifies canonically with a maximal torus in $G^\vee_\bk$. We will denote by $B^\vee_\bk \subset G^\vee_\bk$ the Borel subgroup of $G^\vee_\bk$ whose roots are the coroots associated with the roots of $B$.

Let $\Wf$ be the Weyl group of $(G,T)$, and consider the extended affine Weyl group
\[
 W = \Wf \ltimes \rmX_*(T).
\]
The subgroup $\Waff$ given by the semidirect product of $\Wf$ with the coroot lattice is a normal subgroup.
The Bruhat decomposition provides a natural parametrization of the $\Iw$-orbits in $\Fl_G$ or $\tFl_G$ by $W$. In particular, each $\Iw$-orbit on $\Fl_G$ is isomorphic to an affine space, and the dimension of the orbit labelled by $w$ will be denoted $\ell(w)$. (It is a standard fact that $\Waff$ admits a canonical subset of Coxeter generators, for which the restriction of $\ell$ is the associated length function.)

To each $w \in W$ one can associate ``standard objects''
\[
 \Delta^{\Iw}_w \in \sfD_{\Iw,\Iw}, \qquad \Delta^\wedge_w \in \sfD^\wedge_{\Iwu,\Iwu}
\]
defined by taking the $!$-extension of a shift of an appropriate local system (or pro-local system) on the orbit labelled by $w$, see~\cite[\S 4.2 and~\S 6.2]{reg-quotient-pt2}, and ``costandard objects''
\[
 \nabla^{\Iw}_w \in \sfD_{\Iw,\Iw}, \qquad \nabla^\wedge_w \in \sfD^\wedge_{\Iwu,\Iwu}
\]
obtained by replacing $!$-extension by $*$-extension in this construction. In both cases these objects are perverse, and they satisfy
\[
 \pi_\dag(\Delta^\wedge_w) \cong \For^{\Iw}_{\Iwu}(\Delta^{\Iw}_w), \qquad 
 \pi_\dag(\nabla^\wedge_w) \cong \For^{\Iw}_{\Iwu}(\nabla^{\Iw}_w).
\]
For any $w \in W$ there exists a unique (up to scalar) nonzero morphism $\Delta_w^{\Iw} \to \nabla^{\Iw}_w$; its image is simple, and denoted $\IC_w$. (This is the intersection cohomology complex associated with the constant local system on the orbit labelled by $w$.)

For $w \in W$,
we will also set
\[
 \Delta_w = \For^{\Iw}_{\Iwu}(\Delta^{\Iw}_w), \quad \nabla_w = \For^{\Iw}_{\Iwu}(\nabla^{\Iw}_w).
\]
If $y,w \in W$, we will write $y \leq w$ if the $\Iw$-orbit in $\Fl_G$ labelled by $y$ is contained in the closure of the orbit labelled by $w$.

\begin{rmk}
 The objects $\Delta^{\Iw}_w$ and $\nabla^{\Iw}_w$ are canonical, and do not depend on any choice. The objects $\Delta^\wedge_w$ and $\nabla^\wedge_w$, however, are defined only up to isomorphism in general, since their construction depends on certain choices.
\end{rmk}

\subsection{Tilting perverse sheaves}
\label{ss:tilt-perv}

Recall that an object $\scF \in \sfP_{\Iwu,\Iw}$ is called \emph{tilting} if it admits a filtration with subquotients of the form $\Delta_w$ ($w \in W$) and a filtration with subquotients of the form $\nabla_w$ ($w \in W$). In this case the number of subquotients isomorphic to a given $\Delta_w$ in a filtration with subquotients of the form $\Delta_y$ ($y \in W$) is independent of the choice of filtration, and denoted
\[
 (\scF : \Delta_w).
\]
The general theory of highest weight categories guarantees that the following properties hold.
\begin{enumerate}
 \item 
 The full subcategory $\sfT_{\Iwu,\Iw}$ of $\sfP_{\Iwu,\Iw}$ whose objects are the tilting perverse sheaves is stable under direct sums and direct summands, and it satisfies the Krull--Schmidt property.
 \item
 For any $w \in W$ there exists a unique (up to isomorphism) indecomposable tilting perverse sheaf $\scT_w$ such that $(\scT_w : \Delta_w)=1$ and
 \[
  (\scT_w : \Delta_y) \neq 0 \quad \Rightarrow \quad y \leq w.
 \]
\item
The assignment $w \mapsto \scT_w$ induces a bijection between $W$ and the set of isomorphism classes of indecomposable tilting perverse sheaves in $\sfP_{\Iwu,\Iw}$.
\end{enumerate}

It is clear that the image under Verdier duality of a tilting perverse sheaf is again tilting. From this, it is not difficult to deduce that each $\scT_w$ is Verdier self-dual, and then that any tilting perverse sheaf is isomorphic to its image under Verdier duality. We deduce that if $\scF \in \sfT_{\Iwu,\Iw}$ the number of occurrences of a given $\nabla_w$ in a filtration of $\scF$ with subquotients of the form $\nabla_y$ ($y \in W$) is equal to $(\scF : \Delta_w)$. The following property follows.
\begin{enumerate}
\setcounter{enumi}{3}
 \item 
 \label{it:dim-Hom-tilt}
 For any $\scF,\scG$ in $\sfT_{\Iwu,\Iw}$ we have
 \[
  \dim \Hom_{\sfT_{\Iwu,\Iw}}(\scF,\scG) = \sum_{w \in W} (\scF : \Delta_w) \cdot (\scG : \Delta_w).
 \]
 \end{enumerate}

Similarly, an object $\scF \in \sfP^\wedge_{\Iwu,\Iwu}$ is called \emph{tilting} if it admits a filtration with subquotients of the form $\Delta^{\wedge}_w$ ($w \in W$) and a filtration with subquotients of the form $\nabla^{\wedge}_w$ ($w \in W$). In this case the number of subquotients isomorphic to $\Delta^{\wedge}_w$ in such a filtration is independent of the choice of filtration, and denoted
\[
 (\scF : \Delta^{\wedge}_w).
\]
From the definition we see that the functor $\pi_\dag$ sends tilting perverse sheaves in $\sfP^\wedge_{\Iwu,\Iwu}$ to tilting perverse sheaves in $\sfP_{\Iwu,\Iw}$, and that we have
\begin{equation}
\label{eqn:mult-tilt-wedge}
 (\scF : \Delta^{\wedge}_w) = (\pi_\dag \scF : \Delta_w)
\end{equation}
for any $\scF$ tilting in $\sfP^\wedge_{\Iwu,\Iwu}$ and $w \in W$.
In fact, it turns out that an object $\scF \in \sfD^\wedge_{\Iwu,\Iwu}$ is a tilting perverse sheaf iff $\pi_\dag(\scF)$ is a tilting perverse sheaf. As above the following properties hold.
\begin{enumerate}
\setcounter{enumi}{4}
 \item 
 The full subcategory $\sfT^\wedge_{\Iwu,\Iwu}$ of $\sfP^\wedge_{\Iwu,\Iwu}$ whose objects are the tilting perverse sheaves is stable under direct sums and direct summands, and it satisfies the Krull--Schmidt property. It is also stable under the monoidal product $\hatstar$.
 \item
 For any $w \in W$ there exists a unique (up to isomorphism) object $\scT^\wedge_w \in \sfD^\wedge_{\Iwu,\Iwu}$ such that $\pi_\dag(\scT^\wedge_w) \cong \scT_w$; this object is an indecomposable tilting perverse sheaf.
\item
The assignment $w \mapsto \scT^\wedge_w$ induces a bijection between $W$ and the set of isomorphism classes of indecomposable tilting perverse sheaves in $\sfP^\wedge_{\Iwu,\Iwu}$.
\item
\label{it:tilting-pidag}
For any $\scF$, $\scG$ in $\sfT^\wedge_{\Iwu,\Iwu}$, the morphism~\eqref{eqn:Hom-pidag} is an isomorphism.
\end{enumerate}
For details and references on all of this, see~\cite[\S 6.3]{reg-quotient-pt2}.

For $w \in W$ such that $\ell(w)=0$ we have $\scT^\wedge_w \cong \Delta^\wedge_w \cong \nabla^\wedge_w$. If $w \in \Wf$ satisfies $\ell(w)=1$, the object $\scT^\wedge_s$ also admits an explicit construction, see~\cite[\S 6.6]{reg-quotient-pt2}. For general $w$ there is no such description, and in fact no canonical representative for $\scT^\wedge_w$ (nor for $\scT_w$).

\subsection{Relation with the Hecke category}
\label{ss:relation-Hecke}

From now on we make the following assumptions:
\begin{enumerate}
 \item 
 the quotient of $\rmX^*(T)$ by the root lattice of $(G,T)$ is free;
 \item
 the quotient of $\rmX_*(T)$ by the coroot lattice of $(G,T)$ has no $p$-torsion;
 \item
 for any indecomposable factor in the root system of $(G,T)$, $p$ is strictly larger than the corresponding value in Figure~\ref{fig:bounds}.
\end{enumerate}
In particular, the first assumption ensures that $G^\vee_\bk$ has simply connected derived subgroup, and the second one that its scheme-theoretic center is smooth (see~\S\ref{ss:representations-Iadj}). Finally, the third condition implies that $p$ is good for $G^\vee_\bk$.

We will apply the constructions of~\S\S\ref{ss:Waff}--\ref{ss:completions} to the latter group (with the Borel subgroup $B^\vee_\bk$ and maximal torus $T^\vee_\bk$). In particular, we fix a Steinberg section $\Sigma \subset G^\vee_\bk$, and consider the associated category $\SRep(\bbI_\Sigma^\wedge)$, see~\S\ref{ss:completions}. Note that the affine Weyl groups considered in~\S\ref{ss:Waff}, and their function $\ell$, identify with the groups $W$ and $\Waff$ of~\S\ref{ss:sheaves-Fl} and their function $\ell$. The corresponding subset of simple reflections will now be denoted $\Saff$; it coincides with the subset of $\Waff$ consisting of elements of length $1$. The subgroup of $W$ consisting of elements of length $0$ will be denoted $\Omega$. (This subgroup identifies with the subgroup $\mathbf{\Omega}$ of~\S\ref{ss:Waff} in this case.)

One of the main results of~\cite{reg-quotient-pt2} is a description of the monoidal category $(\sfT^\wedge_{\Iwu,\Iwu}, \hatstar)$ in ``Soergel bimodules'' terms. Namely, by~\cite[Theorem~11.2]{reg-quotient-pt2} there exists an equivalence of additive monoidal categories
\[
 \Phi : (\sfT^\wedge_{\Iwu,\Iwu}, \hatstar) \simto (\SRep(\bbI_\Sigma^\wedge), \oast)
\]
which satisfies
\begin{equation}
\label{eqn:Phi-Tilt}
 \Phi(\scT^\wedge_s) \cong \scB^\wedge_s
\end{equation}
for any $s \in \Saff$, and
\[
 \Phi(\scT^\wedge_w) \cong \scM^\wedge_w
\]
for any $w \in \Omega$. (This functor also satisfies some kind of compatibility with the equivalence~\eqref{eqn:Satake-equiv}, but we will not use this here.)

\begin{rmk}
\label{rmk:assumptions-qlog}
 Later we will also want to apply Lemma~\ref{lem:Abe-add-mul} in this setting. Our assumptions ensure that $p$ is very good for $G^\vee_\bk$ (so that the lemma applies, see Remarks~\ref{rmk:assumptions-Hecke-0} and~\ref{rmk:assumptions-Hecke}), except if $G$ has a component of type $\mathbf{A}_n$ and $p$ divides $n+1$. This lemma also applies for any $p$ if $G=\mathrm{GL}_n$ (see the same remarks), so that the latter case is also somewhat covered.
\end{rmk}

\subsection{Rigidification and Bott--Samelson objects}
\label{ss:rigid}

Below we will need the following construction from~\cite[Remark~11.9]{reg-quotient-pt2}.

As explained in~\cite[\S 6.6]{reg-quotient-pt2}, in case $s \in \Saff \cap \Wf$, the object $\scT_s^\wedge$ has a canonical representative, denoted $\Xi^\wedge_{s,!}$ in \emph{loc.}~\emph{cit.}, and that will be denoted $\scT_s^{\wedge,\can}$ here. For this object, the isomorphism~\eqref{eqn:Phi-Tilt} is canonical.
Let us fix some
representatives $\scT^{\wedge,\can}_{s}$ ($s \in \Saff \smallsetminus \Wf$) and $\scT^{\wedge,\can}_\omega$ ($\omega \in \Omega$) in $\sfT^\wedge_{\Iwu,\Iwu}$ for the objects $\scT^{\wedge}_{s}$ and $\scT^{\wedge}_{\omega}$, together with some identifications
\[
\Phi(\scT^{\wedge,\can}_{s}) = \scB^\wedge_s, \qquad \Phi(\scT^{\wedge,\can}_{\omega}) = \scM^\wedge_\omega
\]
($s \in \Saff \smallsetminus \Wf$, $\omega \in \Omega$.) 
Using the isomorphisms~\eqref{eqn:conjugation-Bs} and monoidality of $\Phi$, we deduce canonical isomorphisms
\begin{equation}
\label{eqn:conjugation-Xi-can}
\scT^{\wedge,\can}_{\omega} \hatstar \scT^{\wedge,\can}_{s} \hatstar \scT^{\wedge,\can}_{\omega^{-1}} \cong \scT^{\wedge,\can}_{\omega s \omega^{-1}}
\end{equation}
for any $s \in \Saff$ and $\omega \in \Omega$.

One can then define the category $\mathsf{T}^{\wedge,\mathrm{BS}}_{\Iwu,\Iwu}$ with
\begin{itemize}
\item
objects the collections $(\omega, s_1, \dots, s_i)$ with $\omega \in \Omega$ and $s_1, \dots, s_i \in \Saff$;
\item
morphisms from $(\omega, s_1, \dots, s_i)$ to $(\omega', s'_1, \dots, s'_j)$ given by
\[
\Hom_{\mathsf{T}^\wedge_{\Iwu,\Iwu}}(\scT^{\wedge,\can}_{\omega} \hatstar \scT^{\wedge,\can}_{s_1} \hatstar \cdots \hatstar \scT^{\wedge,\can}_{s_i}, \scT^{\wedge,\can}_{\omega'} \hatstar \scT^{\wedge,\can}_{s'_1} \hatstar \cdots \hatstar \scT^{\wedge,\can}_{s'_j} \bigr).
\]
\end{itemize}
(In fact, using support considerations one sees that this space vanishes unless $\omega=\omega'$.)

Using the isomorphisms~\eqref{eqn:conjugation-Xi-can} one can define on $\mathsf{T}^{\wedge,\mathrm{BS}}_{\Iwu,\Iwu}$ a monoidal structure, such that we have an equivalence of monoidal categories
\begin{equation}
\label{eqn:equiv-TBS-BSRep}
\sfT^{\wedge,\mathrm{BS}}_{\Iwu,\Iwu} \simto \BSRep(\bbI_\Sigma^\wedge)
\end{equation}
which is the identity on objects,
where $\BSRep(\bbI_\Sigma^\wedge)$ is as in~\S\ref{ss:completions} (for $\bG=G^\vee_\bk$).
We also have a canonical fully faithful monoidal functor
\begin{equation*}
\mathsf{T}^{\wedge,\mathrm{BS}}_{\Iwu,\Iwu} \to \mathsf{T}^\wedge_{\Iwu,\Iwu}
\end{equation*}
sending $(\omega, s_1, \dots, s_i)$ to $\scT^{\wedge,\can}_{\omega} \hatstar \scT^{\wedge,\can}_{s_1} \hatstar \cdots \hatstar \scT^{\wedge,\can}_{s_i}$, and $\mathsf{T}^{\wedge}_{\Iwu,\Iwu}$ identifies with the karoubian closure of the additive hull of $\mathsf{T}^{\wedge,\mathrm{BS}}_{\Iwu,\Iwu}$.

\section{Parity complexes and mixed perverse sheaves}

In~\S\S\ref{ss:parity-comp}--\ref{ss:mixed-perv} we allow $G$ to be any connected reductive algebraic group over $\F$, and allow any choice for $p$ (as long as it is invertible in $\F$).

\subsection{Parity complexes}
\label{ss:parity-comp}


We consider the category $\sfD_{\Iw,\Iw}$ from~\S\ref{ss:sheaves-Fl}, with its convolution product $\star_\Iw$, and the notion of \emph{parity complexes} in this category from~\cite{jmw}. The full subcategory of $\sfD_{\Iw,\Iw}$ whose objects are the parity complexes will be denoted $\Par_{\Iw,\Iw}$; it is stable under the bifunctor $\star_{\Iw}$. This subcategory has a more ``concrete'' description as follows. For any $s \in \Saff$, the simple perverse sheaf $\IC_s$ is just the constant sheaf on the closure of the $\Iw$-orbit labelled by $s$ (a smooth variety, isomorphic to $\mathbb{P}^1$), shifted by $1$; in particular it is a parity complex. On the other hand, if $\omega \in \Omega$ then the orbit associated with $\omega$ is just a point; in particular, $\IC_\omega$ is the skyscraper sheaf at that point, and is also a parity complex. We will denote by $\ParBS_{\Iw,\Iw}$ the category with:
\begin{itemize}
\item
objects the collections $(s_1, \dots, s_i, \omega,n)$ with $s_1, \dots, s_i \in \Saff$, $\omega \in \Omega$ and $n \in \Z$;
\item
morphisms from $(s_1, \dots, s_i,\omega,n)$ to $(s'_1, \dots, s'_j,\omega', n')$ given by
\[
\Hom_{\Par_{\Iw,\Iw}}(\IC_{s_1} \star_{\Iw} \cdots \star_{\Iw} \IC_{s_i} \star_{\Iw} \IC_\omega [n], \IC_{s_1'} \star_{\Iw} \cdots \star_{\Iw} \IC_{s_j'} \star_{\Iw} \IC_{\omega'}[n']).
\]
\end{itemize}
(In fact, using support considerations one sees that the morphism space above vanishes unless $\omega=\omega'$.)

By definition there exists a canonical fully faithful functor
\begin{equation}
\label{eqn:functor-ParBS-Par}
\ParBS_{\Iw,\Iw} \to \sfD_{\Iw,\Iw}
\end{equation}
which takes values in $\Par_{\Iw,\Iw}$.
It is easily seen that for any $\omega \in \Omega$ and $s \in \Saff$ there is a canonical isomorphism
\[
\IC_\omega \star_{\Iw} \IC_{s} \star_{\Iw} \IC_{\omega^{-1}} \cong \IC_{\omega s \omega^{-1}}.
\]
Using this property one obtains that there exists a natural convolution product (still denoted $\star_{\Iw}$) on $\ParBS_{\Iw,\Iw}$ which is defined on objects by
\begin{multline*}
 (s_1, \dots, s_i,\omega,n) \star_{\Iw} (s'_1, \dots, s'_j,\omega',n') = \\
 (s_1, \dots, s_i,\omega s'_1 \omega^{-1}, \dots, \omega s'_j \omega^{-1},\omega\omega',n+n')
\end{multline*}
and such that~\eqref{eqn:functor-ParBS-Par} is monoidal. For any $n \in \Z$ the cohomological shift functor $[n]$ induces an autoequivalence of $\ParBS_{\Iw,\Iw}$, which will again be denoted $[n]$.

It is well known that the category $\sfD_{\Iw,\Iw}$ is Krull--Schmidt, and that an object in $\sfD_{\Iw,\Iw}$ is a parity complex if and only if it is a direct sum of direct summands of objects of $\ParBS_{\Iw,\Iw}$. 
In other words, the functor~\eqref{eqn:functor-ParBS-Par} identifies $\Par_{\Iw,\Iw}$ with the karoubian envelope of the additive hull of the category $\ParBS_{\Iw,\Iw}$.

The theory developed in~\cite{jmw} provides a classification of the indecomposable objects in $\Par_{\Iw,\Iw}$. More specifically, for any $w \in W$ there exists a unique (up to isomorphism) indecomposable object $\scE_w$ in $\Par_{\Iw,\Iw}$ which is supported on the closure of the $\Iw$-orbit labelled by $w$ and whose restriction to this orbit is $\underline{\bk}[\ell(w)]$. Then the assignment $(w,n) \mapsto \scE_w[n]$ induces a bijection between $W \times \Z$ and the set of isomorphism classes of indecomposable objects in $\Par_{\Iw,\Iw}$.

\begin{rmk}
The objects $\scE_w$ have concrete and canonical descriptions in case $\ell(w) \in \{0,1\}$ (namely, these complexes are the appropriate shifts of the constant sheaves on the closures of the corresponding orbits), but not in general.
\end{rmk}

\subsection{\texorpdfstring{$\Iwu$}{Iu}-equivariant parity complexes}
\label{ss:Iwu-parity}

We also have similar notions in the category $\sfD_{\Iwu,\Iw}$; by definition, an object $\scF$ in $\sfD_{\Iw,\Iw}$ is a parity complex if and only if $\For^{\Iw}_{\Iwu}(\scF)$ is a parity complex. If we denote by $\ParBS_{\Iwu,\Iw}$ the category with:
\begin{itemize}
\item
objects the collections $(s_1, \dots, s_i,\omega,n)$ with $s_1, \dots, s_i \in \Saff$, $\omega \in \Omega$ and $n \in \Z$;
\item
morphisms from $(s_1, \dots, s_i,\omega,n)$ to $(s'_1, \dots, s'_j,\omega',n')$ given by
\begin{multline*}
\Hom_{\sfD_{\Iwu,\Iw}} \left( \For^{\Iw}_{\Iwu} ( \IC_{s_1} \star_{\Iw} \cdots \star_{\Iw} \IC_{s_i} \star_{\Iw} \IC_\omega[n]), \right. \\
\left. \For^{\Iw}_{\Iwu}(\IC_{s_1'} \star_{\Iw} \cdots \star_{\Iw} \IC_{s_j'} \star_{\Iw} \IC_{\omega'} [n']) \right),
\end{multline*}
\end{itemize}
and by $\Par_{\Iwu,\Iw}$ the full subcategory of $\sfD_{\Iwu,\Iw}$ whose objects are the parity complexes, then $\Par_{\Iwu,\Iw}$ identifies with the karoubian envelope of the additive hull of the category $\ParBS_{\Iwu,\Iw}$.

The right action of the category $\sfD_{\Iw,\Iw}$ on $\sfD_{\Iwu,\Iw}$ (by convolution) induces a right action of $\ParBS_{\Iw,\Iw}$ on $\ParBS_{\Iwu,\Iw}$, and of $\Par_{\Iw,\Iw}$ on $\Par_{\Iwu,\Iw}$. The corresponding bifunctors will again be denoted $\star_{\Iw}$. For any $n \in \Z$ the cohomological shift functor $[n]$ induces an autoequivalence of $\ParBS_{\Iwu,\Iw}$, which will again be denoted $[n]$.

If $\sfD$ is one of the categories $\sfD_{\Iw,\Iw}$, $\sfD_{\Iwu,\Iw}$, $\Par_{\Iw,\Iw}$, $\Par_{\Iwu,\Iw}$, $\ParBS_{\Iw,\Iw}$ or $\ParBS_{\Iwu,\Iw}$ and $\scF,\scG$ are objects in $\sfD$, then we will set
\[
\Hom^\bullet_{\sfD}(\scF,\scG) = \bigoplus_{n \in \Z} \Hom_{\sfD}(\scF, \scG[n]).
\]
(Depending on the context, this space will be considered either as a graded vector space, or a plain vector space.)
We will see $\bk$ as a graded $\mathsf{H}^{\bullet}_{\Iw}(\mathrm{pt}; \bk)$-module concentrated in degree $0$, in the standard way. The following lemma states a standard property of parity complexes; see e.g.~\cite[Lemma~2.2]{mr}.

\begin{lem}
\label{lem:Hom-parity}
 For any $\scF,\scG$ in $\Par_{\Iw,\Iw}$, the functor $\For^{\Iw}_{\Iwu}$ induces an isomorphism of graded vector spaces
 \[
  \bk \otimes_{\mathsf{H}^{\bullet}_{\Iw}(\mathrm{pt}; \bk)} \Hom^\bullet_{\sfD_{\Iw,\Iw}} ( \scF,\scG ) \simto \Hom^\bullet_{\sfD_{\Iwu,\Iw}} \bigl( \For^{\Iw}_{\Iwu}(\scF),\For^{\Iw}_{\Iwu}(\scG) \bigr).
 \]
\end{lem}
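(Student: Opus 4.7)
The plan is to first reduce to the case of Bott--Samelson parity complexes, then establish freeness of $\Hom^\bullet_{\sfD_{\Iw,\Iw}}(\scF,\scG)$ over $\mathsf{H}^\bullet_{\Iw}(\mathrm{pt};\bk)$ when $\scF,\scG \in \Par_{\Iw,\Iw}$, and finally invoke a standard base-change spectral sequence associated with the extension $\Iwu \lhd \Iw$ with quotient $T$.

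For the reduction, observe that both sides of the stated isomorphism are additive bifunctors that commute with direct summands (the tensor product $\bk \otimes_{\mathsf{H}^\bullet_\Iw(\mathrm{pt};\bk)}(-)$ preserves splittings of its input). Since, by Krull--Schmidt, every object of $\Par_{\Iw,\Iw}$ is a direct summand of an object in the essential image of $\ParBS_{\Iw,\Iw}$, it suffices to verify the isomorphism when $\scF$ and $\scG$ are Bott--Samelson parity complexes $\IC_{s_1} \star_\Iw \cdots \star_\Iw \IC_{s_i} \star_\Iw \IC_\omega$.

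Next, I would prove that for $\scF,\scG \in \Par_{\Iw,\Iw}$ the graded space $\Hom^\bullet_{\sfD_{\Iw,\Iw}}(\scF,\scG)$ is free as a graded module over $\mathsf{H}^\bullet_\Iw(\mathrm{pt};\bk) \cong \Sym(\rmX^*(T)) \otimes \bk$. This is proved by d\'evissage along the stratification of $\Fl_G$ by $\Iw$-orbits: using the parity hypothesis, the distinguished triangles relating $!$-restrictions and $*$-restrictions to closed strata split at the level of $\Hom^\bullet$ into short exact sequences of free modules (each $\Iw$-orbit is isomorphic to an affine space, whose $\Iw$-equivariant cohomology is free over $\mathsf{H}^\bullet_\Iw(\mathrm{pt};\bk)$ of rank $1$ up to shift, and the parity property ensures the vanishing of all obstructing $\mathrm{Ext}^1$'s between the pieces of opposite parity in different cohomological degrees). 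The main technical point is to set up this induction, since one needs the parity property on both $\scF$ and $\scG$ simultaneously; this is standard and carried out e.g.\ in the cited reference.

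Finally, the quotient $\Iw/\Iwu \cong T$ yields the usual comparison: for any $\scF,\scG \in \sfD_{\Iw,\Iw}$ there is a convergent spectral sequence
\[
E_2^{p,q} = \mathrm{Ext}^p_{\mathsf{H}^\bullet_\Iw(\mathrm{pt};\bk)}\!\bigl(\bk,\,\Hom^\bullet_{\sfD_{\Iw,\Iw}}(\scF,\scG)\bigr)^q \;\Longrightarrow\; \Hom^{p+q}_{\sfD_{\Iwu,\Iw}}\!\bigl(\For^{\Iw}_{\Iwu}\scF,\For^{\Iw}_{\Iwu}\scG\bigr),
\]
where $\bk$ is placed in degree $0$ and $\mathsf{H}^\bullet_{\Iwu}(\mathrm{pt};\bk) = \bk$ because $\Iwu$ is pro-unipotent. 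By the freeness established in the previous step, the $E_2$-page is concentrated in the column $p=0$, the spectral sequence degenerates, and the resulting edge morphism is precisely the map induced by $\For^{\Iw}_{\Iwu}$. The hardest step is the freeness assertion; the remaining pieces are formal.
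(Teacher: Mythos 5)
The paper itself gives no argument for this lemma; it cites~\cite[Lemma~2.2]{mr}, whose proof (building on~\cite[Prop.~2.6]{jmw}) follows essentially the strategy you outline: show that $\Hom^\bullet_{\sfD_{\Iw,\Iw}}(\scF,\scG)$ is a free graded module over $\mathsf{H}^\bullet_{\Iw}(\mathrm{pt};\bk)$ when $\scF,\scG$ are parity, by d\'evissage along the stratification using parity vanishing, then compare the $\Iw$- and $\Iwu$-equivariant Hom spaces by a degenerating spectral sequence. Your initial reduction to Bott--Samelson objects is harmless but superfluous, since the d\'evissage gives freeness directly for arbitrary parity complexes.

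There is, however, a genuine error in your comparison spectral sequence. Passing from $\Iw$-equivariant to $\Iwu$-equivariant sheaves is a base change — restriction to the fibre over the closed point of $\Spec\,\mathsf{H}^\bullet_\Iw(\mathrm{pt};\bk)$ in Bernstein--Lunts-type language — and is therefore governed by a derived \emph{tensor}, not a derived Hom. The correct $E_2$-page involves $\mathrm{Tor}^{\mathsf{H}^\bullet_\Iw(\mathrm{pt};\bk)}_\bullet\bigl(\bk,\, \Hom^\bullet_{\sfD_{\Iw,\Iw}}(\scF,\scG)\bigr)$, not $\Ext^\bullet$. This matters decisively for your degeneration step: writing $A := \mathsf{H}^\bullet_\Iw(\mathrm{pt};\bk) \cong \Sym(V)$, if $M$ is free over $A$ then $\mathrm{Tor}^A_p(\bk,M) = 0$ for all $p>0$, whereas $\Ext^p_A(\bk,M)$ is \emph{not} zero for $p>0$ — by Koszul duality it is concentrated in cohomological degree $p=\dim V$. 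So freeness implies degeneration of the Tor spectral sequence, but not of the one you wrote. The sanity check $\scF=\scG=\underline{\bk}_\pt$, with $\Iw = T$ acting trivially on a point and $\Iwu=\{1\}$, already exhibits the discrepancy: both sides of the lemma are $\bk$ in degree $0$, which $\bk\otimes_{\Sym V}\Sym V \cong \bk$ reproduces, while the Ext page would place the answer in degree $\dim V$. Once Tor replaces Ext, the rest of your argument is correct and is essentially the one from the cited reference.
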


Below we will use the following consequences of this lemma:
\begin{enumerate}
 \item 
 \label{it:comment-Parity-1}
 the category $\ParBS_{\Iwu,\Iw}$ identifies with the category whose objects are those of $\ParBS_{\Iw,\Iw}$, and whose morphism space from $\scF$ to $\scG$ is given by the degree-$0$ part in
 \[
  \bk \otimes_{\mathsf{H}^{\bullet}_{\Iw}(\mathrm{pt}; \bk)} \Hom^\bullet_{\ParBS_{\Iw,\Iw}} ( \scF,\scG );
 \]
 \item
 for any $w \in W$, the object $\For^{\Iw}_{\Iwu}(\scE_w)$ is indecomposable; as a consequence, the assignment $(w,n) \mapsto \For^{\Iw}_{\Iwu}(\scE_w)[n]$ induces a bijection between $W \times \Z$ and the set of isomorphism classes of indecomposable objects in $\Par_{\Iwu,\Iw}$. (See~\cite[Lemma~2.4]{mr} for details.)
\end{enumerate}

\subsection{\texorpdfstring{$p$}{p}-Kazhdan--Lusztig polynomials}
\label{ss:pKL}

One possible definition of the \emph{$p$-Kazh\-dan--Lusztig polynomials} attached to $W$ is as follows: for $y,w \in W$ we set
\[
 \ph_{y,w}(v) = \sum_{n \in \Z} \dim \mathsf{H}^{-\ell(w)-n} \bigl( \Fl_{G,y}, \scE_w{}_{|\Fl_{G,y}} \bigr) \cdot v^n.
\]
(The fact that this definition coincides with that considered e.g.~in~\cite{jw} follows from the results of~\cite[Part~III]{rw}. In general, these are \emph{Laurent polynomials} rather than polynomials in the usual sense.)

Below we will use the following standard properties of these polynomials. (For Item~\eqref{it:properties-ph-1}, see e.g.~the proof of~\cite[Proposition~4.2(4)]{jw}. For~\eqref{it:properties-ph-2}, see e.g.~\cite[Proposition~2.6]{jmw}.)

\begin{lem}
 \phantomsection
 \label{lem:properties-ph}
 \begin{enumerate}
  \item 
  \label{it:properties-ph-1}
  For any $w,y \in W$ we have $\ph_{y,w}(v) = \ph_{y^{-1},w^{-1}}(v)$.
  \item
  \label{it:properties-ph-2}
  For any $w,y \in W$ we have
  \[
   \dim \bigl( \Hom^\bullet_{\Par_{\Iwu,\Iw}} ( \For^{\Iw}_{\Iwu}(\scE_w), \For^{\Iw}_{\Iwu}(\scE_y) ) \bigr) = \sum_{z \in W} \ph_{z,w}(1) \cdot \ph_{z,y}(1).
  \]
 \end{enumerate}
\end{lem}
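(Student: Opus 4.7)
Both items are classical facts about parity sheaves and $p$-Kazhdan--Lusztig polynomials, so the plan is not to invent new arguments but to organize the standard ones and indicate how they apply in the present ind-scheme setting.

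For part~\eqref{it:properties-ph-1}, the plan is to exploit the anti-involution induced by group inversion $g \mapsto g^{-1}$ on $\Loop G$. This map exchanges left and right Iwahori cosets, so it induces a equivalence of categories on $\sfD_{\Iw,\Iw}$ (exchanging the two equivariance structures) that sends the stratum $\Fl_{G,w}$ to $\Fl_{G,w^{-1}}$. Since this equivalence preserves supports, dimensions of orbits and the parity condition, it must send the indecomposable parity sheaf $\scE_w$ to $\scE_{w^{-1}}$ (up to a shift which vanishes because $\ell(w) = \ell(w^{-1})$). Consequently, the stalk of $\scE_w$ along $\Fl_{G,y}$ is identified with the stalk of $\scE_{w^{-1}}$ along $\Fl_{G,y^{-1}}$, and the equality of the associated Poincaré polynomials follows directly from the definition of $\ph$.

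For part~\eqref{it:properties-ph-2}, the plan is a standard dévissage along the stratification by $\Iw$-orbits. By Lemma~\ref{lem:Hom-parity}, it suffices to compute $\Hom^\bullet_{\sfD_{\Iw,\Iw}}(\scE_w,\scE_y)$ as a graded $\mathsf{H}^\bullet_{\Iw}(\mathrm{pt};\bk)$-module and then tensor with $\bk$. The $*$-parity of $\scE_w$ gives, on each stratum $\Fl_{G,z} \cong \mathbb{A}^{\ell(z)}$, a splitting of $i_z^* \scE_w$ into shifts of constant sheaves whose multiplicities are recorded by $\ph_{z,w}(v)$; by Verdier self-duality of $\scE_y$ (up to the appropriate shift), the $!$-restriction $i_z^! \scE_y$ splits similarly, with multiplicities recorded by $\ph_{z,y}(v)$. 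Running the stratification spectral sequence (which degenerates by the parity vanishing) reduces the computation to a sum, over $z \in W$, of graded dimensions of $\Hom^\bullet$ between direct sums of constant sheaves on $\mathbb{A}^{\ell(z)}$, each of which is a free $\mathsf{H}^\bullet_{\Iw}(\mathrm{pt};\bk)$-module whose rank is $\ph_{z,w}(1) \cdot \ph_{z,y}(1)$. Summing and specializing $v=1$ after the quotient by the augmentation ideal gives the announced formula.

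The main technical point, and the only one that is really not formal, is to verify that the parity/dévissage argument in the second step works cleanly in the ind-scheme context of $\Fl_G$ and that the Poincaré polynomials of the $!$-restrictions indeed match those of the $*$-restrictions (equivalently, that $\scE_y$ is Verdier self-dual up to a fixed shift). Both points are established in~\cite{jmw} and have been used many times in the affine setting, so the proof reduces to citing those references as in the statement.
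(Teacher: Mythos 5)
Your proposal is correct and fills in the sketch behind the references the paper cites: the geometric ``inversion'' argument for $\ph_{y,w}=\ph_{y^{-1},w^{-1}}$ is the topological incarnation of the flip anti-autoequivalence used in the proof of \cite[Proposition~4.2(4)]{jw}, and the parity/d\'evissage computation of graded $\Hom$ spaces combined with Lemma~\ref{lem:Hom-parity} is exactly the content of \cite[Proposition~2.6]{jmw} specialized to $\Fl_G$. Since the paper gives no proof beyond pointing to those two references, this matches the intended argument; the only point worth being careful about (which you note yourself) is that the indecomposable object $\scE_w$ is characterized by support, normalization on the open orbit, and parity, all of which the inversion equivalence preserves, hence $\scE_w\mapsto\scE_{w^{-1}}$ without any extra shift because $\ell(w)=\ell(w^{-1})$.
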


\subsection{Mixed perverse sheaves}
\label{ss:mixed-perv}

Following~\cite{modrap2}, we define the ``mixed derived category'' of $\Iwu$-equivariant $\bk$-sheaves on $\Fl_G$ by
\[
 \Dmix_{\Iwu,\Iw} := \Kb \Par_{\Iwu,\Iw}.
\]
This category admits a ``Tate twist'' autoequivalence $\langle 1 \rangle$ defined as $\{-1\}[1]$ where $\{-1\}$ is the autoequivalence induced by the negative cohomological shift in the category $\Par_{\Iwu,\Iw}$, while $[1]$ is the cohomological shift in the homotopy category.

The constructions of~\cite[\S2]{modrap2} endow $\Dmix_{\Iwu,\Iw}$ with a ``perverse t-structure'' whose heart is a finite-length abelian category, stable under $\langle 1 \rangle$, and which will be denoted $\Pmix_{\Iwu,\Iw}$. By~\cite[\S 3.2 and Section~4]{modrap2} the category $\Pmix_{\Iwu,\Iw}$ admits a natural structure of graded highest weight category, defined by some families of ``standard objects'' $(\Delta^\mix_w : w \in W)$ and ``costandard objects'' $(\nabla^\mix_w : w \in W)$. In particular there is a notion of tilting object in $\Pmix_{\Iwu,\Iw}$, defined as an object which admits both a filtration with subquotients of the form $\Delta^\mix_w \langle n \rangle$ ($w \in W$, $n \in \Z$) and a filtration with subquotients of the form $\nabla^\mix_w \langle n \rangle$ ($w \in W$, $n \in \Z$). In this case also, if $\scF$ is a tilting object the number of subquotients isomorphic to $\Delta^\mix_w \langle n \rangle$ in such a filtration is well defined, and denoted
\[
 (\scF : \Delta^\mix_w \langle n \rangle).
\]
By~\cite[Proposition~A.4]{modrap2},
the indecomposable tilting objects in $\Pmix_{\Iwu,\Iw}$ are parame\-trized in a natural way by $W \times \Z$. More specifically, for $w \in W$ there exists a unique indecomposable tilting object $\scT^{\mix}_w$ which satisfies
\[
 (\scT^{\mix}_w : \Delta^\mix_w \langle n \rangle)=\delta_{n,0}
\]
for any $n \in \Z$, and
\[
 (\scT^{\mix}_w : \Delta^\mix_y \langle n \rangle) \neq 0 \quad \Rightarrow \quad y \leq w.
\]
With this notation, the assignment $(w,n) \mapsto \scT^\mix_w \langle n \rangle$ induces a bijection between $W \times \Z$ and the set of isomorphism classes of indecomposable tilting objects in $\Pmix_{\Iwu,\Iw}$.

Any object in $\Par_{\Iwu,\Iw}$ can also be seen as an object in $\Dmix_{\Iwu,\Iw}$, by identifying it with a complex concentrated in degree $0$. In particular, the image of $\For^{\Iw}_{\Iwu}(\scE_w)$ will be denoted $\scE_w^\mix$.

\subsection{Relation with the Hecke category}
\label{ss:parity-Hecke}

In this subsection we assume that the conditions considered in~\S\ref{ss:representations-Iadj} and in~\S\ref{ss:additive-Hecke} are satisfied by the group $\bG=G^\vee_\bk$.
Recall the category $\BSK_\add$ constructed in~\S\ref{ss:additive-Hecke}, and the ``right'' variant of this category constructed as in Remark~\ref{rmk:switch-abe-BS}, which we will denote $\BSK_{\mathrm{r},\add}$.
It is a standard fact that we have identifications
\[
 \mathsf{H}^{\bullet}_{\Iw}(\mathrm{pt}; \bk) = \mathsf{H}^{\bullet}_{T}(\mathrm{pt}; \bk) = \mathrm{S}(\bk \otimes_{\Z} \rmX^*(T))
\]
where $\mathrm{S}$ denotes the symmetric algebra (over $\bk$) and the right-hand side is seen as a graded ring with $\bk \otimes_{\Z} \rmX^*(T)$ in degree $2$. Moreover $\bk \otimes_{\Z} \rmX^*(T)$ identifies canonically with the Lie algebra $\bt$ of $T^\vee_\bk$; in this way, $\mathsf{H}^{\bullet}_{\Iw}(\mathrm{pt}; \bk)$ identifies with the graded algebra $\scO(\bt^*)$ considered in~\S\ref{ss:additive-Hecke}. The category $\BSK_{\mathrm{r},\add}$ is related to $\ParBS_{\Iw,\Iw}$ as follows.

\begin{thm}
\label{thm:Hecke-Parity}
There exists a canonical equivalence of monoidal categories
\[
\BSK_{\mathrm{r},\add} \cong \ParBS_{\Iw,\Iw}
\]
which intertwines the shift functors $(1)$ and $[1]$, and is the identity on objects.
\end{thm}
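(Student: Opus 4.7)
The strategy is to exploit the two commuting actions of $\mathsf{H}^\bullet_\Iw(\mathrm{pt};\bk) \cong \scO(\bt^*)$ (via left and right monodromy) on $\Hom^\bullet$-spaces between $\Iw$-equivariant parity complexes, and to match these bimodules with the corresponding objects in $\sfK_{\mathrm{gr},\add}$. The construction will be parallel to, but technically simpler than, the equivalence $\Phi \colon \sfT^\wedge_{\Iwu,\Iwu} \simto \SRep(\bbI_\Sigma^\wedge)$ from~\cite{reg-quotient-pt2}: we work with parity complexes rather than tilting perverse sheaves, and with the graded additive setting rather than the completed one, so no Steinberg or Kostant section is needed.

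First I would construct the functor on objects as the identity, and choose natural images for the generating morphisms. For each $s \in \Saff$, the object $\IC_s$ is the shifted constant sheaf on the $\mathbb{P}^1$-orbit closure $\overline{\Fl_{G,s}}$, which carries the standard unit/counit maps $\IC_e \to \IC_s \to \IC_e[2]$; these correspond to the canonical maps $F_e \to B_s(1) \to F_e(2)$ in $\sfK_{\mathrm{gr},\add}$ coming from the natural inclusion and projection of $R$ into/out of $R \otimes_{R^s} R(1)$. For $\omega \in \Omega$ the object $\IC_\omega$ is a skyscraper, and the geometric isomorphism $\IC_\omega \star_\Iw \IC_s \star_\Iw \IC_{\omega^{-1}} \cong \IC_{\omega s \omega^{-1}}$ matches the algebraic isomorphism~\eqref{eqn:conj-Fw-Bs}. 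This produces the functor on a set of generators of $\BSK_{\mathrm{r},\add}$ (equivalently $\BSK_{\add}$, via Remark~\ref{rmk:switch-abe-BS}), and monoidality gives its value on all morphisms of Bott--Samelson type.

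The heart of the argument is fullness and faithfulness, which amounts to the following \emph{Soergel-type Hom formula} for parity complexes: for any two Bott--Samelson parity complexes $\scF, \scG$ obtained from sequences in $\Saff$ and elements of $\Omega$, the graded $\scO(\bt^*)$-bimodule $\Hom^\bullet_{\sfD_{\Iw,\Iw}}(\scF, \scG)$ is free as a right module and, after base change to the fraction field $Q$ of $\scO(\bt^*)$ on the right, decomposes as a direct sum indexed by $w \in W$ with the $w$-component isomorphic to $Q$ with left action twisted by $w$. To establish this I would use the $!$-filtration of $\scF$ by standard objects $\Delta^\Iw_x$ and compute $\Hom^\bullet$-spaces by a standard \emph{d\'evissage} combining: (i)~parity vanishing (so the long exact sequences split); (ii)~the localization-type computation of $\Hom^\bullet_{\sfD_{\Iw,\Iw}}(\Delta^\Iw_x, \nabla^\Iw_y)$, which is $0$ unless $x = y$ and equal to a rank-one free $\scO(\bt^*)$-module (with the two actions related by $x$) otherwise; and (iii)~the recursion expressing the $!$-stalks of $\IC_{s_1} \star_\Iw \cdots \star_\Iw \IC_{s_i} \star_\Iw \IC_\omega$ along $\Iw$-orbits in terms of the Bott--Samelson resolution of $\overline{\Fl_{G, s_1 \cdots s_i \omega}}$. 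Matching this decomposition with the algebraic decomposition~\eqref{eqn:decomp-K-Abe} of $B_{s_1} \star \cdots \star B_{s_i} \star F_\omega$ then identifies the two bimodules, with the identification being compatible with composition and convolution by construction.

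The principal obstacle is the equivariant Hom computation on the parity side: verifying that the morphism space has exactly the bimodule structure required for an object of $\sfK_{\mathrm{gr},\add}$, and that the natural maps from the algebraic generators land in the correct graded components. In the finite (non-affine) case this is classical Soergel theory, and in the affine case it is essentially contained in the literature on parity complexes on Kac--Moody flag varieties (and is used, in completed form, in~\cite{reg-quotient-pt2}); the work here is to extract the uncompleted graded version and to check canonicity of all identifications. Once this Hom formula is in place, fullness and faithfulness of the functor on Bott--Samelson objects follow formally, and the equivalence of monoidal categories is immediate.
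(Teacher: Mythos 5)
Your proposal takes a genuinely different route from the paper. The paper's proof is essentially a citation argument: it combines~\cite[Theorem~10.7.1]{rw} (a canonical equivalence between the diagrammatic Hecke category of Elias--Williamson and Bott--Samelson parity complexes on Kac--Moody flag varieties) with the main result of~\cite{abe} (identifying the diagrammatic Hecke category with Abe's bimodule category $\BSK$), which together give the equivalence on the full subcategories of objects $(s_1,\dots,s_i,e)$; the paper then extends this to arbitrary $\omega \in \Omega$ by checking compatibility with conjugation. You are instead proposing to reconstruct a Soergel-type equivalence from scratch, on the parity side, by matching $\Hom^\bullet$-spaces as $\scO(\bt^*)$-bimodules and decomposing them along $W$ after localization.

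There is, however, a genuine gap in your construction of the functor. You specify its value on the ``dot'' maps $F_e \to B_s(1) \to F_e(2)$ (corresponding to the unit and counit of the duality data for $\mathbb{P}^1$) and on the $\Omega$-conjugation isomorphisms, and then invoke monoidality to propagate to ``all morphisms of Bott--Samelson type.'' But morphisms in $\BSK_{\mathrm{r},\add}$ (equivalently, in the diagrammatic Hecke category) are \emph{not} generated under composition and tensor product by polynomials, dots, and $\Omega$-twists alone: the $2m_{st}$-valent braid morphisms $B_s \star B_t \star \cdots \to B_t \star B_s \star \cdots$ are independent generators, and you give no prescription for their images nor any verification of the relations they satisfy (the two-color and three-color Elias--Williamson relations, or equivalently Abe's bimodule identities). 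Providing a canonical, compatible geometric realization of these braid morphisms and checking all relations is precisely the hard content of \cite[Part~III]{rw} and~\cite{abe}; it cannot be circumvented by a Hom-dimension count. Matching $\Hom^\bullet$-spaces as bimodules after localization (your ``Soergel-type Hom formula'') would show at most that a correctly-constructed functor is fully faithful; it does not produce the functor, and in particular does not pin down a canonical choice of the braid isomorphisms on the parity side. Your dévissage and the computation $\Hom^\bullet(\Delta_x^\Iw, \nabla_y^\Iw)=\delta_{x,y}\,\scO(\bt^*)$ are correct and are indeed part of the standard toolkit, but they occur downstream of the missing step.

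A more economical repair, closer in spirit to what you are doing, would be to quote the diagrammatic-to-geometric equivalence of~\cite{rw} as a black box for the pure-$\Saff$ part (it already packages the dot and braid data and their relations), then compose with Abe's equivalence and handle $\Omega$ by conjugation compatibility as you do, which is essentially the paper's own argument. Alternatively, the remark following the theorem in the paper points to~\cite[\S 3]{abe2} for a direct construction avoiding diagrammatics; that route also needs a canonical construction of a functor, not just a Hom-space match.
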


\begin{proof}
This theorem is essentially obtained as the combination of~\cite[Theorem~10.7.1]{rw} and the main result of~\cite{abe}. More precisely, these references provide a canonical equivalence of monoidal categories with the expected properties between the full subcategories in $\BSK_{\mathrm{r},\add}$ and $\ParBS_{\Iw,\Iw}$ whose objects are of the form $(s_1, \dots, s_i,e)$ with $s_1, \dots, s_i \in \Saff$. However, it is easily seen that this equivalence intertwines, for any $\omega \in \Omega$, the equivalences given by
\[
M \mapsto F_\omega \star M \star F_{\omega^{-1}} \quad \text{and} \quad \scF \mapsto \IC_\omega \star_{\Iw} \scF \star_{\Iw} \IC_{\omega^{-1}}.
\]
Using this property one sees that the equivalence above can be ``extended'' to the equivalence of the theorem.
\end{proof}

\begin{rmk}
A different (and more direct) proof of Theorem~\ref{thm:Hecke-Parity} can be obtained following the constructions in~\cite[\S 3]{abe2}. We will not pursue this here.
\end{rmk}

\section{Applications}
\label{sec:applications}

Recall the assumptions we have imposed in~\S\ref{ss:relation-Hecke}. From now on, in addition we assume that condition~\eqref{it:bilin-form} of~\S\ref{ss:additive-Hecke} holds for the group $G^\vee_\bk$, and also that the condition in Lemma~\ref{lem:Abe-add-mul} holds for this group. (See Remark~\ref{rmk:assumptions-qlog} for comments on this assumption.)

\subsection{A degrading functor}
\label{ss:Hecke-statement}

Recall the constructions of~\S\ref{ss:rigid}.
We will denote by $\sfTBS_{\Iwu,\Iw}$ the category with
\begin{itemize}
\item
objects the collections $(\omega, s_1, \dots, s_i)$ with $\omega \in \Omega$ and $s_1, \dots, s_i \in \Saff$;
\item
morphisms from $(\omega, s_1, \dots, s_i)$ to $(\omega', s'_1, \dots, s'_j)$ given by
\[
\Hom_{\mathsf{T}_{\Iwu,\Iw}} \bigl( \pi_\dag(\scT^{\wedge,\can}_\omega \hatstar \scT^{\wedge,\can}_{s_1} \hatstar \cdots \hatstar \scT^{\wedge,\can}_{s_i}), \pi_\dag(\scT^{\wedge,\can}_{\omega'} \hatstar \scT^{\wedge,\can}_{s'_1} \hatstar \cdots \hatstar \scT^{\wedge,\can}_{s'_j}) \bigr).
\]
\end{itemize}
Then we have
a canonical fully faithful functor
\begin{equation}
\label{eqn:functor-T-TBS-Iw}
\sfTBS_{\Iwu,\Iw} \to \mathsf{T}_{\Iwu,\Iw}
\end{equation}
which 
identifies $\sfT_{\Iwu,\Iw}$ with the karoubian closure of the additive hull of $\sfTBS_{\Iwu,\Iw}$. By construction the objects in $\sfT^{\wedge,\mathrm{BS}}_{\Iwu,\Iwu}$ are the same as those of $\sfTBS_{\Iwu,\Iw}$, and, by property~\eqref{it:tilting-pidag} in~\S\ref{ss:tilt-perv}, for $x,y \in \sfT^{\wedge,\mathrm{BS}}_{\Iwu,\Iwu}$ we have a canonical isomorphism
\[
\Hom_{\sfTBS_{\Iwu,\Iw}}(x,y) = \Hom_{\sfT^{\wedge,\mathrm{BS}}_{\Iwu,\Iwu}}(x,y) \otimes_{\scO((T^\vee_\bk)^\wedge)} \bk.
\]


\begin{thm}
\label{thm:Hecke-Tilt}
 There exist a functor
 \[
  \sv : \ParBS_{\Iwu,\Iw} \to \sfTBS_{\Iwu,\Iw}
 \]
and an isomorphism $\varepsilon : \sv \circ [1] \simto \sv$
such that:
\begin{enumerate}
 \item 
 \label{it:Hecke-Tilt-1}
 for any $\scF,\scG$ in $\ParBS_{\Iwu,\Iw}$, the functor $\sv$ and the isomorphism $\varepsilon$ induce an isomorphism
 \[
  \Hom^\bullet_{\ParBS_{\Iwu,\Iw}}(\scF,\scG) \simto \Hom_{\sfTBS_{\Iwu,\Iw}}(\sv(\scF),\sv(\scG));
 \]
 \item
 \label{it:Hecke-Tilt-2}
for any $s_1, \dots, s_i \in \Saff$, $\omega \in \Omega$ and $n \in \Z$ we have
\[
\sv(s_1, \dots, s_i,\omega,n)= (\omega^{-1},s_i, \dots, s_1).
\]
\end{enumerate}
\end{thm}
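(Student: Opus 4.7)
The plan is to build $\sv$ as a composition of the equivalences and functors assembled in Sections~\ref{sec:Hecke-cat}--\ref{sec:tilting} and to deduce property~\eqref{it:Hecke-Tilt-1} by chasing through each step on morphism spaces and performing a single base change to $\bk$ at the end.

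More explicitly, I would first form an intermediate functor $\Xi : \ParBS_{\Iw,\Iw} \to \sfT^{\wedge,\mathrm{BS}}_{\Iwu,\Iwu}$ obtained by composing: Theorem~\ref{thm:Hecke-Parity} (to reach $\BSK_{\mathrm{r},\add}$), the switch equivalence of Remark~\ref{rmk:switch-abe-BS} (to reach $\BSK_\add$), the functor $\mathsf{F}$ of Proposition~\ref{prop:relation-Hecke-cat-add} (to reach $\BSK_\add^\wedge$), Lemma~\ref{lem:Abe-add-mul} applied to $G^\vee_\bk$ (to reach $\BSK^\wedge$), the equivalence~\eqref{eqn:equiv-BSK-BSRep} (to reach $\BSRep(\bbI_\Sigma^\wedge)$), and finally the equivalence~\eqref{eqn:equiv-TBS-BSRep} (to land in $\sfT^{\wedge,\mathrm{BS}}_{\Iwu,\Iwu}$). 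Tracking each arrow on objects shows that $\Xi$ sends $(s_1,\dots,s_i,\omega,n)$ to $(\omega^{-1}, s_i, \dots, s_1)$: the inversion of $\omega$ and reversal of the sequence come from the switch, while $\mathsf{F}$ forgets $n$ thanks to the identity $\mathsf{F}\circ(1) = \mathsf{F}$. This last identity also produces a canonical isomorphism $\Xi \circ [1] \simto \Xi$, which will play the role of $\varepsilon$.

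For property~\eqref{it:Hecke-Tilt-1}, I would trace through the chain on Hom spaces. Writing $\mathsf{H} := \mathsf{H}^\bullet_\Iw(\mathrm{pt}; \bk) = \scO(\bt^*)$, the combination of Theorem~\ref{thm:Hecke-Parity}, the switch of Remark~\ref{rmk:switch-abe-BS} and Proposition~\ref{prop:relation-Hecke-cat-add} yields an isomorphism
\[
\scO((\bt^*)^\wedge) \otimes_{\mathsf{H}} \Hom^\bullet_{\ParBS_{\Iw,\Iw}}(\scF, \scG) \;\simto\; \Hom_{\BSK_\add^\wedge}(\cdots),
\]
and the remaining three equivalences transform the right-hand side into $\Hom_{\sfT^{\wedge,\mathrm{BS}}_{\Iwu,\Iwu}}(\Xi\scF, \Xi\scG)$, regarded as a module over $\scO((T^\vee_\bk)^\wedge)$ via the isomorphism $\scO((\bt^*)^\wedge) \simto \scO((T^\vee_\bk)^\wedge)$ of Lemma~\ref{lem:Abe-add-mul}. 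Applying $\bk \otimes (-)$ over this common base ring then converts the source into $\Hom^\bullet_{\ParBS_{\Iwu,\Iw}}(\scF,\scG)$ by Comment~\eqref{it:comment-Parity-1} of~\S\ref{ss:Iwu-parity}, and the target into $\Hom_{\sfTBS_{\Iwu,\Iw}}(\Xi\scF, \Xi\scG)$ by the identification recalled at the start of~\S\ref{ss:Hecke-statement}. The resulting isomorphism defines $\sv$ on morphism spaces; declaring $\sv$ to agree with $\Xi$ on objects gives the required functor, and the canonical isomorphism $\Xi \circ [1] \simto \Xi$ descends to the isomorphism $\varepsilon : \sv \circ [1] \simto \sv$.

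The main obstacle I anticipate is verifying that the various module structures over $\scO(\bt^*)$, $\scO((\bt^*)^\wedge)$ and $\scO((T^\vee_\bk)^\wedge)$ are genuinely intertwined at each step of the chain --- in particular, that the $\bWf$-equivariant isomorphism $\scO((\bt^*)^\wedge) \simto \scO((T^\vee_\bk)^\wedge)$ of Lemma~\ref{lem:Abe-add-mul} identifies the ``additive'' monodromy appearing in Proposition~\ref{prop:relation-Hecke-cat-add} with the ``multiplicative'' right monodromy on $\sfT^{\wedge,\mathrm{BS}}_{\Iwu,\Iwu}$, compatibly with the equivalences~\eqref{eqn:equiv-BSK-BSRep} and~\eqref{eqn:equiv-TBS-BSRep}. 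Once this compatibility is established, the remainder of the argument is a formal assembly of isomorphisms already at our disposal.
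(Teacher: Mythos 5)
Your proposal is correct and follows essentially the same route as the paper's proof: it uses the same chain of equivalences (Theorem~\ref{thm:Hecke-Parity}, Remark~\ref{rmk:switch-abe-BS}, Proposition~\ref{prop:relation-Hecke-cat-add}, Lemma~\ref{lem:Abe-add-mul}, and the equivalences~\eqref{eqn:equiv-BSK-BSRep} and~\eqref{eqn:equiv-TBS-BSRep}), differing only in that you construct an intermediate functor at the completed level and apply $\bk \otimes (-)$ at the very end, whereas the paper performs the base change to $\bk$ on each side first and then reads off the final functor $\sv_{\BSK}$ between the resulting categories $\underline{\BSK}_\add$ and $\underline{\BSK}^\wedge$. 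The ``obstacle'' you flag is indeed worth keeping in mind, but it is handled automatically by the fact that the relevant $\scO$-module structures on Hom spaces in the Abe-type categories are all just the right $R$-module structures, which all the cited equivalences intertwine by construction.
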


\begin{proof} 
Using Theorem~\ref{thm:Hecke-Parity} and comment~\eqref{it:comment-Parity-1} after Lemma~\ref{lem:Hom-parity} one obtains a canonical equivalence between the category $\ParBS_{\Iwu,\Iw}$ and the category $\overline{\BSK}_{\mathrm{r},\add}$ defined as follows: its objects are those of $\BSK_{\mathrm{r},\add}$,
and the morphisms from $M$ to $M'$ are given by the degree-$0$ part in
\[
\bk \otimes_{\scO(\bt^*)} \Hom^\bullet_{\BSK_{\mathrm{r},\add}}(M,M').
\]
This equivalence is the identity on objects.

On the other hand, consider the category $\BSK^\wedge$.
Using the equivalences~\eqref{eqn:equiv-BSK-BSRep} and~\eqref{eqn:equiv-TBS-BSRep} together with comment~\eqref{it:tilting-pidag} in~\S\ref{ss:tilt-perv}, we obtain a canonical equivalence between $\sfTBS_{\Iwu,\Iw}$ and the category $\underline{\BSK}^\wedge$ defined as follows: its objects are those of $\BSK^\wedge$, and the morphisms from $M$ to $M'$ are given by
\[
\Hom_{\BSK^\wedge}(M,M') \otimes_{\scO((T^\vee_\bk)^\wedge)} \bk.
\]
(Here the action of $\scO((T^\vee_\bk)^\wedge)$ on $\Hom$ spaces is the natural one, induced by the second projection $(T^\vee_\bk \times_{T^\vee_\bk / \Wf} T^\vee_\bk)^{\wedge} \to (T^\vee_\bk)^\wedge$.)
Once again, this equivalence is the identity on objects.

As explained in Remark~\ref{rmk:switch-abe-BS}, we have a canonical equivalence of categories $\BSK_{\mathrm{r},\add} \simto \BSK_\add$ sending $(s_1, \dots, s_i,\omega)$ to $(\omega^{-1}, s_i, \dots, s_1)$. This equivalence induces an equivalence between $\overline{\BSK}_{\mathrm{r},\add}$ and the category $\underline{\BSK}_{\add}$ which has the same objects as $\BSK_\add$, and morphisms from $M$ to $M'$ defined as
\[
  \Hom_{\BSK_\add}(M,M') \otimes_{\scO(\bt^*)} \bk.
\]
Therefore, to conclude the proof of Theorem~\ref{thm:Hecke-Tilt} it suffices to construct a functor
\begin{equation}
\label{eqn:vBSK}
\sv_{\BSK} : \underline{\BSK}_\add \to \underline{\BSK}^\wedge
\end{equation}
sending each collection $(\omega, s_1, \dots, s_i,n)$ to $(\omega, s_1, \dots, s_i)$ and an isomorphism $\sv_{\BSK} \circ (1) \cong \sv_{\BSK}$ such that for any $M,N$ in $\underline{\BSK}_\add$ these data induce an isomorphism
\[
\bigoplus_{n \in \Z} \Hom_{\underline{\BSK}_\add}(M,N(n)) \simto \Hom_{\underline{\BSK}^\wedge}(\sv_{\BSK}(M), \sv_{\BSK}(N)).
\]
This functor is obtained from Proposition~\ref{prop:relation-Hecke-cat-add} and Lemma~\ref{lem:Abe-add-mul}.
%
\end{proof}

\begin{rmk}
Theorem~\ref{thm:Hecke-Tilt} has a variant relating the categories $\ParBS_{\Iw,\Iw}$ and $\sfT^{\wedge,\mathrm{BS}}_{\Iwu,\Iwu}$, and involving the isomorphism appearing in Lemma~\ref{lem:Abe-add-mul}. We leave it to the interested reader to formulate this statement, and modify the proof above accordingly.
\end{rmk}

\subsection{Numerical consequence}

We now discuss an application of Theorem~\ref{thm:Hecke-Tilt} to multiplicities of standard perverse sheaves in indecomposable tilting perverse sheaves. 
Recall the objects $\scT_w$ and $\scT_w^\wedge$ defined in~\S\ref{ss:tilt-perv}.

\begin{cor}
\label{cor:mult-tilt}
 For any $w,y \in W$ we have
 \[
  (\scT^\wedge_w : \Delta^\wedge_y) = (\scT_w : \Delta_y) = \ph_{y,w}(1).
 \]
\end{cor}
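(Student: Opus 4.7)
The first equality $(\scT^\wedge_w : \Delta^\wedge_y) = (\scT_w : \Delta_y)$ is immediate from~\eqref{eqn:mult-tilt-wedge}. For the nontrivial equality with $\ph_{y,w}(1)$, my plan is to extract a bilinear identity on $\Hom$-spaces from Theorem~\ref{thm:Hecke-Tilt}, and then recover the individual multiplicities from it by Cholesky-type uniqueness.

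First I would extend $\sv$ by passing to Karoubian envelopes, producing a functor $\widetilde{\sv} : \Par_{\Iwu,\Iw} \to \sfT_{\Iwu,\Iw}$ together with an isomorphism $\widetilde{\sv} \circ [1] \simto \widetilde{\sv}$, such that the induced map
\[
 \bigoplus_{n \in \Z} \Hom_{\Par_{\Iwu,\Iw}}(\scF,\scG[n]) \simto \Hom_{\sfT_{\Iwu,\Iw}}(\widetilde{\sv}\scF, \widetilde{\sv}\scG)
\]
is an isomorphism for all $\scF,\scG$. Since both categories are Krull--Schmidt, this ``degrading'' property forces $\widetilde{\sv}$ to send indecomposables to indecomposables and to be essentially surjective, whence it induces a bijection $\sigma \colon W \to W$ on isomorphism classes of indecomposables (up to shift on the source). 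Next I would identify $\sigma(w) = w^{-1}$: given a reduced expression $w = s_1 \cdots s_i \omega$, the parity Bott--Samelson $(s_1,\dots,s_i,\omega,0)$ has support $\overline{\Fl_{G,w}}$ and contains $\For^\Iw_{\Iwu}(\scE_w)$ as a summand of multiplicity one; by Theorem~\ref{thm:Hecke-Tilt}\eqref{it:Hecke-Tilt-2} its image under $\widetilde{\sv}$ is the tilting Bott--Samelson $(\omega^{-1},s_i,\dots,s_1)$, which has support $\overline{\Fl_{G,w^{-1}}}$ and contains $\scT_{w^{-1}}$ as a summand of multiplicity one, whence $\widetilde{\sv}(\For^\Iw_{\Iwu}\scE_w) \cong \scT_{w^{-1}}$ by matching the unique summands of full support.

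With this identification in hand, I would combine Lemma~\ref{lem:properties-ph}\eqref{it:properties-ph-2} on the parity side with property~\eqref{it:dim-Hom-tilt} of~\S\ref{ss:tilt-perv} on the tilting side to obtain, for all $w,y \in W$,
\[
 \sum_{z \in W} (\scT_{w^{-1}} : \Delta_z)(\scT_{y^{-1}} : \Delta_z) = \sum_{z \in W} \ph_{z,w}(1)\,\ph_{z,y}(1).
\]
Substituting $w \mapsto w^{-1}$ and $y \mapsto y^{-1}$, then applying Lemma~\ref{lem:properties-ph}\eqref{it:properties-ph-1} and reindexing $z \mapsto z^{-1}$ in the resulting right-hand side, this rewrites as
\[
 \sum_{z \in W} (\scT_w : \Delta_z)(\scT_y : \Delta_z) = \sum_{z \in W} \ph_{z,w}(1)\,\ph_{z,y}(1).
\]
The matrices $A = \bigl((\scT_w:\Delta_z)\bigr)_{w,z}$ and $B = \bigl(\ph_{z,w}(1)\bigr)_{w,z}$ are both lower triangular unipotent with respect to any total refinement of the Bruhat order, so the identity $AA^{\mathrm{t}} = BB^{\mathrm{t}}$ forces $A=B$ by uniqueness of the Cholesky decomposition, yielding the desired formula.

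The step I expect to be the main obstacle is the support-tracking argument pinning down $\sigma(w) = w^{-1}$: one must rule out that $\widetilde{\sv}$ sends the indecomposable supported on $\overline{\Fl_{G,w}}$ to an indecomposable supported on a \emph{proper} closed subvariety of $\overline{\Fl_{G,w^{-1}}}$. The key input is that on both the parity and tilting sides, the Bott--Samelson object associated with a reduced expression contains a unique indecomposable summand whose support is the full closure; since the degrading functor $\widetilde{\sv}$ sends a Bott--Samelson to a Bott--Samelson with matching multiplicities of summands, these distinguished summands must correspond under $\sigma$.
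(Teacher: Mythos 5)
Your proposal is correct and follows essentially the same route as the paper: degrading functor from parity to tilting, identification $\sv(\For^{\Iw}_{\Iwu}\scE_w)\cong\scT_{w^{-1}}$, comparison of $\Hom$-dimensions via Lemma~\ref{lem:properties-ph}\eqref{it:properties-ph-2} and property~\eqref{it:dim-Hom-tilt}, then unravelling the bilinear identity. Your ``Cholesky uniqueness'' is the paper's ``induction on $w$, then on $y$ for fixed $w$'' dressed up differently, and your support-tracking argument is a reasonable way to fill in the paper's ``it is not difficult to deduce from~\eqref{eqn:v-image-BS}.''

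One point is stated too briefly: the claim that ``since both categories are Krull--Schmidt, the degrading property forces $\widetilde{\sv}$ to send indecomposables to indecomposables'' does not follow from Krull--Schmidt and the degrading property alone. For $\scE_w$ the degrading property only tells you that $\End(\widetilde{\sv}(\For^\Iw_\Iwu\scE_w))$ is the \emph{entire} graded ring $\Hom^\bullet_{\Par_{\Iwu,\Iw}}(\For^\Iw_\Iwu\scE_w,\For^\Iw_\Iwu\scE_w)$. To conclude indecomposability you must additionally know this graded ring is local as an ungraded ring. The paper makes this explicit: the graded ring is finite-dimensional with a local degree-$0$ part (indecomposability of $\For^\Iw_\Iwu\scE_w$), and a result of Gordon--Green~\cite{gg} then says such a ring is local ungraded. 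This ingredient should not be suppressed, since it is precisely where the special structure of parity complexes (rather than mere Krull--Schmidtness) enters.
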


\begin{proof}
The first equality follows from the definitions and~\eqref{eqn:mult-tilt-wedge}.

 Passing to karoubian closures of additive hulls (see~\S\ref{ss:Iwu-parity} and~\S\ref{ss:Hecke-statement}), the functor $\sv$ of Theorem~\ref{thm:Hecke-Tilt} induces a functor
 \[
  \Par_{\Iwu,\Iw} \to \sfT_{\Iwu,\Iw}
 \]
(still denoted $\sv$) which is a ``degrading functor'' in the sense that it satisfies property~\eqref{it:Hecke-Tilt-1} of Theorem~\ref{thm:Hecke-Tilt}. By construction we have
\begin{equation}
\label{eqn:v-image-BS}
\sv \bigl( \For^{\Iw}_{\Iwu} ( \IC^{\Iw}_{s_1} \star_{\Iw} \cdots \star_{\Iw} \IC^{\Iw}_{s_i} \star_{\Iw} \IC^{\Iw}_\omega)\bigr) \cong 
\pi_\dag \bigl( \scT_{\omega^{-1}}^\wedge \hatstar \scT^{\wedge}_{s_i} \hatstar \cdots \hatstar \scT^{\wedge}_{s_1} \bigr)
\end{equation}
for any $\omega \in \Omega$ and $s_1, \dots, s_i \in \Saff$.
For any $w \in W$, the finite-dimensional graded ring
\[
 \Hom^\bullet_{\Par_{\Iwu,\Iw}}(\For^{\Iw}_{\Iwu}(\scE_w),\For^{\Iw}_{\Iwu}(\scE_w))
\]
has a local degree-$0$ part; it is therefore local as an ungraded ring, see~\cite{gg}. This observation and the ``degrading'' property of $\sv$ show that $\sv(\For^{\Iw}_{\Iwu}(\scE_w))$ is indecomposable. Once this fact it known, 
it is not difficult to deduce from~\eqref{eqn:v-image-BS} that for any $w \in W$ we have
\[
 \sv(\For^{\Iw}_{\Iwu}(\scE_w)) \cong \scT_{w^{-1}}.
\]
We deduce that for any $w,y \in W$ we have
\[
 \dim \bigl( \Hom^\bullet_{\Par_{\Iwu,\Iw}} ( \For^{\Iw}_{\Iwu}(\scE_w), \For^{\Iw}_{\Iwu}(\scE_y) ) \bigr) = \dim \bigl( \Hom_{\sfT_{\Iwu,\Iw}}(\scT_{w^{-1}}, \scT_{y^{-1}}) \bigr).
\]
Comparing Lemma~\ref{lem:properties-ph}\eqref{it:properties-ph-2} and the formula in Item~\eqref{it:dim-Hom-tilt} of~\S\ref{ss:tilt-perv}, one then deduces (by induction on $w$, and then by induction on $y$ for fixed $w$) that for any $w,y \in W$ we have
\[
  (\scT_w : \Delta_y) = \ph_{y^{-1},w^{-1}}(1).
 \]
 Finally, the second formula of the corollary follows, using Lemma~\ref{lem:properties-ph}\eqref{it:properties-ph-1}.
\end{proof}

\begin{rmk}
\label{rmk:assumptions-relax}
 Using standard arguments (as e.g.~in~\cite[\S 9.5]{reg-quotient-pt3}) one can extend the validity of Corollary~\ref{cor:mult-tilt} to any connected reductive algebraic group $G$ and field $\bk$ of characteristic $p$, assuming only that for any indecomposable factor in the root system of $(G,T)$, $p$ is strictly larger than the corresponding value in Figure~\ref{fig:bounds}.
\end{rmk}

\subsection{Koszul duality}
\label{ss:Koszul-duality}

Another application of Theorem~\ref{thm:Hecke-Tilt} is to an alternative construction of the ``modular Koszul duality'' of~\cite{amrw}, in the special case of affine flag varieties. This construction, based on the ideas of an earlier construction in the setting of ordinary flag varieties of reductive groups~\cite{modrap2}, gives more than the methods of~\cite{amrw}: it also allows to construct a ``forgetful functor'' relating the ``mixed perverse sheaves'' of~\cite{modrap2, amrw} to ordinary perverse sheaves.

As terminology and notation suggest, one wants to think of $\Dmix_{\Iwu,\Iw}$ as a ``mixed version'' of the category $\sfD_{\Iwu,\Iw}$, and in fact the results of~\cite{amrw,prinblock} show that this category has properties similar to those of the category of mixed $\overline{\mathbb{Q}}_\ell$-sheaves in the sense of Deligne (or, more precisely, a modification considered in~\cite{bgs}; see~\cite{ar-kd}).
However, from its construction we do not have a priori any formal relation between $\Dmix_{\Iwu,\Iw}$ and $\sfD_{\Iwu,\Iw}$. Point~\eqref{it:forget} of the following theorem exactly compensates this discrepancy.

\begin{thm}
 \phantomsection
 \label{thm:koszul-duality}
 \begin{enumerate}
  \item 
  There exists an equivalence of triangulated categories
  \[
   \kappa : \Dmix_{\Iwu,\Iw} \simto \Dmix_{\Iwu,\Iw}
  \]
which satisfies $\kappa \circ \langle 1 \rangle \cong \langle -1 \rangle [1] \circ \kappa$ and
\begin{align*}
 \kappa(\Delta^\mix_w) \cong \Delta^\mix_{w^{-1}}, &\qquad \kappa(\nabla^\mix_w) \cong \nabla^\mix_{w^{-1}}, \\
 \kappa(\scT^\mix_w) \cong \scE^\mix_{w^{-1}}, &\qquad \kappa(\scE^\mix_w) \cong \scT^\mix_{w^{-1}}
\end{align*}
for any $w \in W$.
\item
\label{it:forget}
There exists a functor
\[
 \nu : \Dmix_{\Iwu,\Iw} \to \sfD_{\Iwu,\Iw}
\]
and an isomorphism of functors $\nu \circ \langle 1 \rangle \cong \nu$ such that for any $\scF,\scG$ in $\Dmix_{\Iwu,\Iw}$ the induced morphism
\[
 \bigoplus_{n \in \Z} \Hom_{\Dmix_{\Iwu,\Iw}}(\scF,\scG \langle n \rangle) \to \Hom_{\sfD_{\Iwu,\Iw}}(\nu(\scF),\nu(\scG))
\]
is an isomorphism. Moreover $\nu$ is t-exact for the perverse t-structures, and satisfies
\begin{align*}
 \nu(\Delta^\mix_w) \cong \Delta_w, &\qquad \nu(\nabla^\mix_w) \cong \nabla_w, \\
 \nu(\scT^\mix_w) \cong \scT_{w}, &\qquad \nu(\scE^\mix_w) \cong \scE_{w}
\end{align*}
for any $w \in W$
 \end{enumerate}
\end{thm}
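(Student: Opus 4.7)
The proof follows the template established in \cite{modrap2} for finite flag varieties, adapted to the affine setting via Theorem~\ref{thm:Hecke-Tilt}. I would first promote the functor $\sv$ from Bott--Samelson subcategories to the full categories by passing to additive hulls and Karoubian envelopes, obtaining an additive functor $\sv : \Par_{\Iwu,\Iw} \to \sfT_{\Iwu,\Iw}$ for which $\sv \circ [1] \cong \sv$ and the degrading property on Hom spaces continues to hold. The target $\sfT_{\Iwu,\Iw}$ is the tilting subcategory of the highest weight category $\sfP_{\Iwu,\Iw}$, so the standard tilting realization gives a triangulated equivalence $\Kb(\sfT_{\Iwu,\Iw}) \simeq \Db(\sfP_{\Iwu,\Iw})$, which embeds fully faithfully into $\sfD_{\Iwu,\Iw}$.

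For the Koszul duality $\kappa$, I would exploit two descriptions of the mixed derived category: by definition $\Dmix_{\Iwu,\Iw} = \Kb(\Par_{\Iwu,\Iw})$, while the mixed tilting realization yields $\Dmix_{\Iwu,\Iw} \simeq \Kb(\Tilt(\Pmix_{\Iwu,\Iw}))$. Using the extended $\sv$ together with Theorem~\ref{thm:Hecke-Tilt}(2), I would identify $\Par_{\Iwu,\Iw}$ and $\Tilt(\Pmix_{\Iwu,\Iw})$ as additive categories, with the internal parity shift $[1]$ on one side matching $\langle -1 \rangle[1]$ on the other, so that Tate twist is transported to the shift-and-twist $\langle -1 \rangle[1]$ as required. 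Passing to bounded homotopy categories and composing the two realizations produces $\kappa$. The behavior on indecomposables --- in particular the swap $\scT^\mix_w \leftrightarrow \scE^\mix_{w^{-1}}$, and likewise for standards and costandards --- comes from the explicit formula in Theorem~\ref{thm:Hecke-Tilt}(2), whose order reversal and inversion of $\omega$ yield precisely the index involution $w \mapsto w^{-1}$.

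For the degrading functor $\nu$, I would take the canonical realization functor $\Kb(\Par_{\Iwu,\Iw}) \to \sfD_{\Iwu,\Iw}$ associated to the full embedding $\Par_{\Iwu,\Iw} \hookrightarrow \sfD_{\Iwu,\Iw}$. Since $\langle 1 \rangle = \{-1\}[1]$, the internal shift $\{-1\}$ (which realizes as $[-1]$ on each term of a chain complex in $\sfD_{\Iwu,\Iw}$) exactly cancels the homotopy shift $[1]$, giving $\nu \circ \langle 1 \rangle \cong \nu$. The degrading property on Hom spaces is tautological when both arguments are parity complexes concentrated in one chain degree, and extends to all objects by standard cohomological arguments; perverse t-exactness is built into the construction of the t-structure in \cite{modrap2}. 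The identification $\nu(\scE^\mix_w) \cong \scE_w$ is immediate from the definition; for $\scT^\mix_w$, $\Delta^\mix_w$, $\nabla^\mix_w$ I would combine with the Koszul duality constructed above, so that $\nu(\scT^\mix_w) \cong \nu(\kappa^{-1} \scE^\mix_{w^{-1}})$, and then appeal to Corollary~\ref{cor:mult-tilt} and the $w \mapsto w^{-1}$ symmetry to conclude.

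The main obstacle is verifying the effect of $\nu$ and $\kappa$ on the tilting, standard, and costandard objects, which do not possess canonical chain-complex representations as parity complexes. This forces $\kappa$ and $\nu$ to be developed in parallel rather than independently, and requires careful bookkeeping of how the two gradings on $\Dmix_{\Iwu,\Iw}$ (the internal parity shift $\{1\}$ and the homotopy shift $[1]$) combine into the Tate twist and are transported by each construction. The affine setting introduces no fundamentally new obstruction beyond those handled in \cite{modrap2} once Theorem~\ref{thm:Hecke-Tilt} supplies the parity-to-tilting degrading at the Hecke-category level.
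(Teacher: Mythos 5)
Your overall strategy matches the paper's, which is simply to run the proofs of~\cite[Theorem~5.4 and Proposition~5.5]{modrap2} verbatim, substituting Theorem~\ref{thm:Hecke-Tilt} for the main result of~\cite{modrap1}: extend $\sv$ across Karoubian envelopes, compare $\Dmix_{\Iwu,\Iw}$ in its two incarnations $\Kb(\Par_{\Iwu,\Iw})$ and $\Kb(\Tilt(\Pmix_{\Iwu,\Iw}))$, and invoke the tilting realization $\Kb(\sfT_{\Iwu,\Iw}) \simeq \Db(\sfP_{\Iwu,\Iw}) \hookrightarrow \sfD_{\Iwu,\Iw}$. That part is fine.

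However, your treatment of part~(2) has a genuine gap. You define $\nu$ as the realization functor $\Kb(\Par_{\Iwu,\Iw}) \to \sfD_{\Iwu,\Iw}$, which makes $\nu(\scE^\mix_w) \cong \scE_w$ immediate, but then you need independent arguments for the t-exactness and for the values on $\Delta^\mix_w$, $\nabla^\mix_w$, $\scT^\mix_w$; none of these is tautological for a realization functor, since the perverse t-structure on $\Dmix_{\Iwu,\Iw}$ is not inherited from the chain degree, so "built into the construction" is not a justification. More seriously, the argument you give for $\nu(\scT^\mix_w) \cong \scT_w$ is circular: writing $\nu(\scT^\mix_w) \cong \nu(\kappa^{-1}\scE^\mix_{w^{-1}})$ is an identity because $\kappa^{-1}\scE^\mix_{w^{-1}} \cong \scT^\mix_w$ by part~(1), so it provides no new information, and Corollary~\ref{cor:mult-tilt} only constrains multiplicities, not the isomorphism type of $\nu(\scT^\mix_w)$ (for which one would already need to know $\nu$ preserves the highest-weight structure). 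You also never address $\nu(\Delta^\mix_w) \cong \Delta_w$ directly. The route in~\cite{modrap2} is set up the other way around: one defines $\nu$ as the composite $\Dmix_{\Iwu,\Iw} \simeq \Kb(\Tilt(\Pmix_{\Iwu,\Iw})) \to \Kb(\sfT_{\Iwu,\Iw}) \simeq \Db(\sfP_{\Iwu,\Iw}) \hookrightarrow \sfD_{\Iwu,\Iw}$, where the second arrow is the additive degrading functor supplied (after Karoubian completion) by Theorem~\ref{thm:Hecke-Tilt}. With that construction $\nu(\scT^\mix_w) \cong \scT_w$, t-exactness and the values on $\Delta^\mix_w$, $\nabla^\mix_w$ are the easy consequences, and it is $\nu(\scE^\mix_w) \cong \scE_w$ that requires the additional argument (via the Bott--Samelson presentations). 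Your proposal reverses which case is easy and which is hard, and the hard case in your direction is left unproved.
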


\begin{proof}
 The proofs are identical to those of~\cite[Theorem~5.4 and Proposition~5.5]{modrap2}, taking as input Theorem~\ref{thm:Hecke-Tilt} instead of the main result of~\cite{modrap1}.
\end{proof}

\begin{rmk}
\label{rmk:nu-simples}
It is a standard fact that the simple objects in the category $\sfP_{\Iwu,\Iw}$ are in bijection with $W$, via the assignment $w \mapsto \IC_w$. (We omit the functor $\For^{\Iw}_{\Iwu}$ in the notation here.) A similar statement holds in the category $\sfP^\mix_{\Iwu,\Iw}$ (see~\cite[\S 3.1]{modrap2}): for any $w \in W$ the image $\IC_w^\mix$ of the unique (up to scalar) morphism $\Delta_w^\mix \to \nabla_w^\mix$ is simple, and the assignment $(w,n) \mapsto \IC^\mix_w \langle n \rangle$ induces a bijection between $W \times \Z$ and the set of isomorphism classes of simple objects in $\sfP^\mix_{\Iwu,\Iw}$. It is easily seen that for any $w \in W$ we have $\nu(\IC_w^\mix) \cong \IC_w$.
\end{rmk}


\subsection{Whittaker and parahoric versions}
\label{ss:Whit-par}


Recall that a subset $K \subset \Saff$ is called \emph{finitary} if the subgroup $W_K \subset W$ it generates is finite. (Typical examples of finitary subsets are $K=\varnothing$ and $K=\Sf$.) In this case, we will denote by $w_K$ the longest element in $W_K$. 

To a finitary subset $K \subset \Saff$ one can associate a parahoric subgroup $Q_K \subset \Loop G$ containing $\Iw$. Then we have the corresponding partial affine flag variety
\[
\Fl_{G,K} = (\Loop G / Q_K)_{\mathrm{fppf}},
\]
which is an ind-projective ind-scheme.
The natural quotient morphism
\[
\pi_K : \Fl_G \to \Fl_{G,K}
\]
is a Zariski locally trivial fibration with fibers isomorphic to the flag variety of a reductive algebraic group (namely, the quotient $M_K$ of $Q_K$ by its pro-unipotent radical). The $\Iw$-orbits on $\Fl_{G,K}$ for the natural action are in a canonical bijection with the quotient $W/W_K$.

\begin{ex}
In case $K=\varnothing$, resp.~$K=\Sf$, we have $\Fl_{G,\varnothing}=\Fl_G$, resp.~$\Fl_{G,\Sf} = \Gr_G$.
\end{ex}

Choose, for any $w \in \Wf$, a lift $\dot{w} \in \mathrm{N}_G(T)$ of $w$. Then we obtain lifts in $\Loop G$ of all elements of $W$ as follows: if $w = x \mathsf{t}(\lambda)$ with $x \in \Wf$ and $\lambda \in \rmX_*(T)$ we set $\dot{w} = \dot{x} z^\lambda$, were $z^\lambda \in \Loop T$ is the point naturally associated with $\lambda$. We also fix, for any positive root $\alpha$, a morphism $\varphi_\alpha : \mathrm{SL}_2 \to G$ which satisfies the natural conditions spelled out e.g.~in~\cite[\S 3.4]{ar-steinberg}.

We continue with a finitary subset $K \subset \Saff$ as above, and
let now $L \subset \Saff$ be another finitary subset. We set $\Iwu^L = \dot{w}_L \Iwu (\dot{w}_L)^{-1}$. Then the quotient $\Iwu^L / (\Iwu^L \cap \Iwu)$ identifies with the unipotent radical of a Borel subgroup of the reductive quotient $M_L$. Our choice of morphisms $\varphi_\alpha$ determines a morphism from this group to the additive group $\Ga$, see~\cite[\S 3.4]{ar-steinberg}, and we denote by $\psi_L : \Iwu^L \to \Ga$ the composition with the projection $\Iwu^L \to \Iwu^L / (\Iwu^L \cap \Iwu)$. Assuming that there exists a nontrivial $p$-th root of unity in $\bk$ (which we fix), we obtain an Artin--Schreier local system $\mathrm{AS}$ on $\Ga$, and we consider the category
\[
\Db_{(\Iwu^L, \psi_L^* \mathrm{AS})}(\Fl_{G,K})
\]
of $(\Iwu^L, \psi_L^* \mathrm{AS})$-equivariant $\bk$-sheaves on $\Fl_{G,K}$, and the subcategory
\[
\Perv_{(\Iwu^L, \psi_L^* \mathrm{AS})}(\Fl_{G,K})
\]
of perverse sheaves.

\begin{ex}
In case $L=\varnothing$ we have $\Iwu^\varnothing = \Iwu$, and $\Psi_\varnothing$ is the trivial morphism. 
In this case we do not need to assume that $\bk$ contains a nontrivial $p$-th root of unity, and we write $\Db_{\Iwu}(\Fl_{G,K})$, $\Perv_{\Iwu}(\Fl_{G,K})$ instead of $\Db_{(\Iwu^\varnothing, \psi_\varnothing^* \mathrm{AS})}(\Fl_{G,K})$, $\Perv_{(\Iwu^\varnothing, \psi_\varnothing^* \mathrm{AS})}(\Fl_{G,K})$.
On the other hand, if $L=\Sf$ the group $\Iwu^{\Sf}$ is the preimage under the projection $\Loop^+ G \to G$ of the unipotent radical of the Borel subgroup opposite to $B$ with respect to $T$. The morphism $\psi_{\Sf}$ is the composition of the restriction of the latter morphism with a generic additive character of the unipotent radical.
\end{ex}

The set of $\Iwu^L$-orbits in $\Fl_K$ is in a canonical bijection with the double quotient $W_L \backslash W / W_K$, but not all orbits support nonzero $(\Iwu^L, \psi_L^* \mathrm{AS})$-equivariant local systems. More specifically, denote by ${}^L W^K \subset W$ the subset of elements $w$ which satisfy $\ell(w_L w w_K) = \ell(w_L) + \ell(w) + \ell(w_K)$. (See~\cite[Lemma~2.4]{ar-steinberg} for alternative characterizations of these elements---this statement only considers the case $K=\Sf$, but the general case is similar.) These elements are minimal in their coset in $W_L \backslash W / W_K$; in particular, each double coset contains at most one element which satisfies this property. But not every double coset contains such an element, except in the special case where $K$ or $L$ is empty. With this notation, the orbit corresponding to a double coset supports a nonzero $(\Iwu^L, \psi_L^* \mathrm{AS})$-equivariant local system iff it contains an element $w$ in ${}^L W^K$; in this case, there exists a unique irreducible such local system. Taking $!$-extension, $*$-extension, and intermediate extension of this local system (shifted by the dimension of the orbit) we obtain objects
\[
{}^L \Delta_w^K, \quad {}^L \nabla_w^K, \quad {}^L \hspace{-1pt} \IC_w^K
\]
in $\Perv_{(\Iwu^L, \psi_L^* \mathrm{AS})}(\Fl_{G,K})$. Then $\Perv_{(\Iwu^L, \psi_L^* \mathrm{AS})}(\Fl_{G,K})$ is a highest weight category with weight poset ${}^L W^K$ (for the restriction of the order $\leq$ on $W$ considered in~\S\ref{ss:sheaves-Fl}), standard objects the objects $({}^L \Delta_w^K : w \in {}^L W^K)$, and costandard objects the objects $({}^L \nabla_w^K : w \in {}^L W^K)$. In particular, one can consider the notion of tilting objects in $\Perv_{(\Iwu^L, \psi_L^* \mathrm{AS})}(\Fl_{G,K})$, and we have a bijection $w \mapsto {}^L \hspace{-1pt} \scT^K_w$ between ${}^L W^K$ and the set of isomorphism classes of indecomposable tilting objects in this category. For a tilting object $\scF$ one can also consider the multiplicity $(\scF : {}^L \Delta_w^K)$ of a given object ${}^L \Delta_w^K$ in a filtration with standard subquotients.

The main result of the present subsection is the following result, which generalizes Corollary~\ref{cor:mult-tilt}. (The latter statement corresponds to the case $K=L=\varnothing$ of the present theorem.)

\begin{thm}
\label{thm:parabolic-Whit}
For any $y,w \in {}^L W^K$ we have
\[
({}^L \hspace{-1pt} \scT^K_w : {}^L \Delta_y^K) = \sum_{x \in W_K} (-1)^{\ell(x)} \cdot {}^p \hspace{-1pt} h_{yx, w_L w}(1).
\]
\end{thm}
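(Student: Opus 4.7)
My plan is to adapt the strategy of Corollary~\ref{cor:mult-tilt} to the Whittaker-parahoric setting. The first step is to develop, in parallel to Section~\ref{sec:tilting} and~\S\S\ref{ss:parity-comp}--\ref{ss:mixed-perv}, the theory of tilting perverse sheaves and parity complexes in $\Db_{(\Iwu^L, \psi_L^*\mathrm{AS})}(\Fl_{G,K})$; indecomposable parity complexes ${}^L\scE_w^K$ ($w \in {}^L W^K$) exist by~\cite{jmw}, and tilting objects ${}^L\scT_w^K$ by general highest-weight theory. Both families are related to their Iwahori counterparts via the parabolic pullback $\pi_K^*[\ell(w_K)]$ (which is t-exact and fully faithful on the perverse hearts) and the Whittaker averaging functor $\Av_{\psi_L}^!$, each of which preserves parity complexes, standard/costandard objects and tilting perverse sheaves.

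The central step is to establish a Whittaker-parahoric analogue of Theorem~\ref{thm:Hecke-Tilt}: a degrading functor from a Bott-Samelson subcategory of parity complexes on $\Fl_{G,K}$ onto a Bott-Samelson subcategory of the tilting category. I would obtain this by composing the Iwahori version (Theorem~\ref{thm:Hecke-Tilt}) with $\Av_{\psi_L}^! \circ \pi_K^*[\ell(w_K)]$, carefully tracking how these functors relabel Bott-Samelson generators and behave with respect to the grading shift. With the degrading functor in hand, the argument of Corollary~\ref{cor:mult-tilt} transports almost verbatim: locality of endomorphism rings of indecomposable parity complexes forces the functor to send ${}^L\scE_w^K$ to ${}^L\scT^K_{w^{-1}}$ (with the inversion coming from the same mechanism as in Corollary~\ref{cor:mult-tilt}), which yields
\[
\dim \Hom^\bullet({}^L\scE_w^K, {}^L\scE_y^K) = \dim \Hom({}^L\scT^K_{w^{-1}}, {}^L\scT^K_{y^{-1}}).
\]
Combined with the tilting-Hom identity from Item~\eqref{it:dim-Hom-tilt} of~\S\ref{ss:tilt-perv}, a double induction (first on $w$, then on $y$ for fixed $w$) extracts the multiplicity formula, provided the left-hand side equals $\sum_z {}^p n^{K,L}_{z,w}(1) \cdot {}^p n^{K,L}_{z,y}(1)$, where ${}^p n^{K,L}_{z,w}(v) := \sum_{x \in W_K} (-1)^{\ell(x)} \ph_{zx, w_L w}(v)$. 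An inversion symmetry ${}^p n^{K,L}_{z,w}(1) = {}^p n^{L,K}_{z^{-1},w^{-1}}(1)$ analogous to Lemma~\ref{lem:properties-ph}\eqref{it:properties-ph-1} lets one translate the induction back from $w^{-1}, y^{-1}$ to $w, y$.

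The main obstacle is the computation of the parity Hom spaces in this combinatorial form. The alternating sign $(-1)^{\ell(x)}$ should arise from an Euler-characteristic phenomenon in the behavior of parity complexes under $\pi_K$ (a $W_K$-tower of $\mathbb{P}^1$-fibrations), combined with the vanishing of Whittaker averaging $\Av_{\psi_L}^!$ on $\Iw$-orbits not indexed by elements of ${}^L W$. Concretely, one can pull a Whittaker-parahoric parity complex back along $\pi_K$ to $\Fl_G$, apply $\For^\Iw_\Iwu$, and then compute Hom spaces in $\Par_{\Iw,\Iw}$ using the standard formula of Lemma~\ref{lem:properties-ph}\eqref{it:properties-ph-2}; the sign $(-1)^{\ell(x)}$ should fall out of a careful stalk/costalk computation on $W_K$-cosets of ${}^L W$-indexed orbits. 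Once this combinatorial identity is proved, the rest of the argument is a routine adaptation of Corollary~\ref{cor:mult-tilt}.
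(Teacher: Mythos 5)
Your strategy is genuinely different from the paper's, and it has an unclosed gap at exactly the place you flag as the ``main obstacle.'' The paper's proof never rebuilds a Whittaker--parahoric Soergel calculus or a fresh degrading functor. It first reduces to $L=\varnothing$ using the Whittaker averaging functor and its adjoints, which are t-exact and send standard/costandard objects to standard/costandard objects and (by~\cite[Proposition~3.12]{ar-model}) take ${}^L\hspace{-1pt}\scT^K_w$ to ${}^\varnothing\hspace{-1pt}\scT^K_{w_L w}$, giving $({}^L\hspace{-1pt}\scT^K_w : {}^L\Delta^K_y) = ({}^\varnothing\hspace{-1pt}\scT^K_{w_Lw} : {}^\varnothing\Delta^K_y)$. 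It then uses $(\pi_K)^\dag$ to identify $\Perv_{\Iwu}(\Fl_{G,K})$ and $\Perv^\mix_{\Iwu}(\Fl_{G,K})$ with Serre subcategories of the corresponding categories on $\Fl_G$, observes that the already-constructed $\nu$ of Theorem~\ref{thm:koszul-duality} restricts to a degrading functor $\nu_K$ on these subcategories, and finally reads off the standard-in-tilting multiplicities in $\Perv^\mix_{\Iwu}(\Fl_{G,K})$ from~\cite[\S 6 and Corollary~7.5]{amrw}, which is precisely where the alternating sum over $W_K$ is proved. No new parity Hom computation is required.

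The gap in your plan: the identity $\dim\Hom^\bullet({}^L\scE^K_w, {}^L\scE^K_y) = \sum_z {}^p n^{K,L}_{z,w}(1)\cdot {}^p n^{K,L}_{z,y}(1)$ is not something you can ``read off from a careful stalk/costalk computation.'' Hom dimensions between parity complexes are sums of products of stalk and costalk dimensions (all manifestly non-negative), whereas ${}^p n^{K,L}_{z,w}$ is an alternating sum; equating the two is an honest combinatorial theorem, not a direct geometric tally. That theorem is in fact the content of~\cite[Corollary~7.5]{amrw}, and deducing it from scratch (via an Euler-characteristic argument on the $W_K$-tower of $\mathbb{P}^1$-fibrations, as you suggest) is essentially re-proving a nontrivial result of that paper. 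A secondary concern is whether $\Av^!_{\psi_L} \circ \pi_K^*[\ell(w_K)]$ really behaves as cleanly on Bott--Samelson objects as you need: these functors are not monoidal, and without the Serre-subcategory identification via $(\pi_K)^\dag$ and the restriction step, it is not clear how your proposed composite degrading functor acts on the relevant generators.

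For the rest of your outline, the reduction via averaging to the $L=\varnothing$ case is consistent with the paper's first step, and the locality/indecomposability argument is analogous to the one you correctly identify from Corollary~\ref{cor:mult-tilt}; but the whole second step would be much shorter if you use $(\pi_K)^\dag$ to descend the already-available $\nu$ rather than constructing a new degrading functor from scratch.
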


\begin{proof}
The first step is to reduce the proof to the case $L=\varnothing$. For that, recall that we have an ``averaging'' functor
\begin{equation*}
\Db_{\Iwu}(\Fl_{G,K}) \to
\Db_{(\Iwu^L, \psi_L^* \mathrm{AS})}(\Fl_{G,K}).
\end{equation*}
This functor
has a left and a right adjoint, which are t-exact, and send standard, resp.~costandard, objects to standard, resp.~costandard, objects. (These statements are proved in~\cite[\S 3.7]{ar-model} in case $K=\Sf$. The general case is similar. Similar comments apply to~\cite[Proposition~3.12]{ar-model} which is cited below.) More explicitly, the image of ${}^L \Delta_y^K$ under any of these functors admits a filtration whose associated graded is the sum of the objects ${}^\varnothing \Delta^K_{xy}$ where $x$ runs over $W_L$, and similarly for costandard objects. By~\cite[Proposition~3.12]{ar-model} these functors send ${}^L \hspace{-1pt} \scT^K_w$ to ${}^\varnothing \hspace{-1pt} \scT^K_{w_L w}$; we deduce that for $y,w \in {}^L W^K$ we have
\[
({}^L \hspace{-1pt} \scT^K_w : {}^L \Delta_y^K) = ({}^\varnothing \hspace{-1pt} \scT^K_{w_L w} : {}^\varnothing \Delta_y^K).
\]
As announced, it therefore suffices to prove the theorem in case $L=\varnothing$.

Now we consider the (smooth) morphism $\pi_K$. 
We have a t-exact 
functor
\begin{equation}
\label{eqn:pullback-piK}
(\pi_K)^\dag := (\pi_K)^*[\dim(Q_K/\Iw)] : \Db_{\Iwu}(\Fl_{G,K}) \to \Db_{\Iwu}(\Fl_G).
\end{equation}
This functor has a right adjoint
\[
(\pi_K)_\dag := (\pi_K)_*[-\dim(Q_K/\Iw)] : \Db_{\Iwu}(\Fl_G) \to \Db_{\Iwu}(\Fl_{G,K}),
\]
and we have
\[
(\pi_K)_\dag \circ (\pi_K)^\dag \cong \bigoplus_{x \in W_K} \id[-2\ell(x)].
\]
(In fact, the functor $(\pi_K)^\dag$ is given by convolution on the right with the $Q_K$-equivariant complex $\underline{\bk}_{Q_K / \Iw}[\dim(Q_K/\Iw)]$ on $\Fl_G$, and the functor $(\pi_K)_\dag$ is given by right convolution with the $\Iw$-equivariant complex $\delta_K[-\dim(Q_K/\Iw)]$, where $\delta_K$ is the skyscraper sheaf at the base point of $\Fl_{G,K}$. The composition $(\pi_K)_\dag \circ (\pi_K)^\dag$ is therefore convolution on the right with the $Q_K$-equivariant convolution of these complexes, which is the tensor product of $\bigoplus_n \mathsf{H}^n(Q_K/I;\bk)[-n]$ with the skyscraper sheaf at the base point of $\Fl_G$.) Since ${}^{\mathrm{p}} \hspace{-1pt} \mathscr{H}^0 \circ (\pi_K)_\dag \circ (\pi_K)^\dag \cong \id$, the functor~\eqref{eqn:pullback-piK} is fully faithful on perverse sheaves (which, of course, follows also from general results on perverse sheaves), and since ${}^{\mathrm{p}} \hspace{-1pt} \mathscr{H}^1 \circ (\pi_K)_\dag \circ (\pi_K)^\dag=0$ its essential image is stable under extensions. It is also a standard fact that this functor sends simple objects to simple objects; this essential image therefore coincides with the Serre subcategory generated by the objects $\IC_w$ where $w$ is maximal in $wW_K$.

Similarly, we can consider the mixed derived category $\Dmix_{\Iwu}(\Fl_{G,K})$ of $\Iwu$-equiva\-riant sheaves on $\Fl_{G,K}$ (defined as in the case $K=\varnothing$ in~\S\ref{ss:mixed-perv}). This category has a natural ``perverse'' t-structure whose heart is denoted $\Perv^\mix_{\Iwu}(\Fl_{G,K})$. The functor $(\pi_K)^\dag$ sends parity complexes to parity complexes, hence induces a functor
\[
\Dmix_{\Iwu}(\Fl_{G,K}) \to \Dmix_{\Iwu}(\Fl_{G}).
\]
The same comments as above show that this functor is t-exact, and that its restriction to perverse sheaves identifies $\Perv^\mix_{\Iwu}(\Fl_{G,K})$ with the Serre subcategory of $\Perv^\mix_{\Iwu}(\Fl_{G})$ generated by the simple objects $\IC^\mix_w \langle n \rangle$ for $w \in W$ maximal in $wW_K$ and $n \in \Z$.

Now, consider the functor
\begin{equation*}
\Pmix_{\Iwu,\Iw} \to \Perv_{\Iwu}(\Fl_G)
\end{equation*}
obtained by restriction from the functor $\nu$ of Theorem~\ref{thm:koszul-duality}\eqref{it:forget}. The comments above and Remark~\ref{rmk:nu-simples} show that this functor restricts to a functor
\[
\nu_K : \Perv^\mix_{\Iwu}(\Fl_{G,K}) \to \Perv_{\Iwu}(\Fl_{G,K}).
\]
There are standard, costandard, and tilting objects in the category $\Perv^\mix_{\Iwu}(\Fl_{G,K})$, and one can show that $\nu_K$ sends standard, costandard, tilting objects to standard, costandard, tilting objects respectively, and indecomposable objects to indecomposable objects. In particular, for any $w \in {}^\varnothing W^K$ the object ${}^\varnothing \hspace{-1pt} \scT^K_w$ is the image of the indecomposable tilting object in $\Perv^\mix_{\Iwu}(\Fl_{G,K})$ labelled by $w$. 

Now the multiplicities of standard objects in indecomposable tilting modules in $\Perv^\mix_{\Iwu}(\Fl_{G,K})$ can be obtained by copying in our present setting the constructions of~\cite[\S 6]{amrw}. (See in particular~\cite[Corollary~7.5]{amrw} for similar results.) The formula obtained in this way is exactly that of the theorem.
\end{proof}

\begin{rmk}
\begin{enumerate}
\item
The proof of Theorem~\ref{thm:parabolic-Whit} shows that, in fact, for $y,w \in {}^L W^K$, for any $z \in W_L$ we have
\[
({}^L \hspace{-1pt} \scT^K_w : {}^L \Delta_y^K) = \sum_{x \in W_K} (-1)^{\ell(x)} \cdot {}^p \hspace{-1pt} h_{zyx, w_L w}(1).
\]
\item
Given a Coxeter group and a finite parabolic subgroup, there are two families of ``parabolic'' Kazhdan--Lusztig polynomials: the $(+)$-parabolic ones, whose definition involves the ``trivial'' module for the Hecke algebra of the parabolic subgroup, and the $(-)$-parabolic ones, whose definition involves the ``sign'' module for this Hecke algebra. One can also combine these constructions (considering two finite parabolic subgroups, one acting on the left and the other one acting on the right), and there are analogues of these constructions for $p$-Kazhdan--Lusztig polynomials. The polynomials that appear in Theorem~\ref{thm:parabolic-Whit} are $(+)$-parabolic on the left for $W_L$, and $(-)$-parabolic on the right for $W_K$.
\item
There is one case of Theorem~\ref{thm:parabolic-Whit} which is particularly relevant for Representation Theory, namely when $K=\Sf$ and $L=\varnothing$. In this case, by the Finkelberg--Mirkovi{\'c} conjecture proved in~\cite{reg-quotient-pt3}, if $p$ satisfies appropriate conditions the category $\Perv_{\Iwu}(\Gr_G)$ is equivalent to the extended principal block of the category of representations of the reductive algebraic group over $\bk$ whose Frobenius twist is $G^\vee_\bk$. Under this equivalence, tilting perverse sheaves correspond to tilting representations, and the formula in Theorem~\ref{thm:parabolic-Whit} corresponds to the character formula conjectured with Williamson and first proved in~\cite{amrw}.
\item
The same arguments as in~\cite[Proposition~2.4.1]{yun} show that the functor
\[
(\pi_K)_* : \Db_{(\Iwu^L, \psi_L^* \mathrm{AS})}(\Fl_{G}) \to \Db_{(\Iwu^L, \psi_L^* \mathrm{AS})}(\Fl_{G,K})
\]
sends tilting perverse sheaves to tilting perverse sheaves. One can also easily show that if $w \in {}^L W^\varnothing \smallsetminus {}^L W^K$ we have $(\pi_K)_*({}^L \hspace{-1pt} \scT^\varnothing_w)=0$. (See~\cite[Lemma~6.3(2)]{amrw} for a similar statement for mixed perverse sheaves; the same arguments apply here.) One can deduce from Theorem~\ref{thm:parabolic-Whit} that for any $w \in {}^L W^K$ we have
\begin{equation}
\label{eqn:pushforward-tilting}
(\pi_K)_*({}^L \hspace{-1pt} \scT^\varnothing_w) \cong {}^L \hspace{-1pt} \scT^K_w.
\end{equation}
In the setting of $\ell$-adic sheaves, this statement follows from~\cite[Proposition~3.4.1]{yun} (see also~\cite[\S 5]{yun}); the proof in this case uses considerations of weights of Frobenius. We do not know any more direct proof of~\eqref{eqn:pushforward-tilting}.
\end{enumerate}
\end{rmk}

\end{document}